\title{Reverse mathematics and initial intervals}
\author{Emanuele Frittaion}
   \address{Dipartimento di Matematica e Informatica,
   Universit\`{a} di Udine,
   33100 Udine,
   Italy}
\email{emanuele.frittaion@uniud.it}
\author{Alberto Marcone}
   \address{Dipartimento di Matematica e Informatica,
   Universit\`{a} di Udine,
   33100 Udine,
   Italy}
\email{alberto.marcone@uniud.it}
\date{October 28, 2013}
\subjclass[2010]{Primary: 03B30; Secondary: 03F35, 06A07}
\thanks{We thank Gregory Igusa, Matthew Hendtlass and Henry Towsner for
useful discussions about the subject.
In particular Igusa suggested the details of the proof of Lemma \ref{REC}.
We also thank the anonymous referees for a careful reading of the paper and several useful
suggestions.}
\newcommand{\N}{\mathbb{N}}
\newcommand{\Q}{\mathbb{Q}}
\newcommand{\KB}{\mathrm{KB}}
\newcommand{\down}[1]{P_{\preceq {#1}}}  
\newcommand{\up}[1]{P_{\succeq {#1}}}    
\newcommand{\downs}[1]{P_{\prec {#1}}}   
\newcommand{\ups}[1]{P_{\succ {#1}}}     
\newcommand{\inc}[1]{P_{\perp {#1}}}     
\DeclareMathOperator{\Down}{\downarrow}  
\DeclareMathOperator{\restr}{\restriction} 
\DeclareMathOperator{\Int}{\mathcal{I}}    
\DeclareMathOperator{\Fin}{Fin}
\newcommand{\conc}{{{}^\smallfrown}}
\newcommand{\PI}[2]{\ensuremath{\boldsymbol\Pi^{#1}_{#2}}}
\newcommand{\SI}[2]{\ensuremath{\boldsymbol\Sigma^{#1}_{#2}}}
\newcommand{\DE}[2]{\ensuremath{\boldsymbol\Delta^{#1}_{#2}}}
\newcommand{\RCA}{{\ensuremath{\mathsf{RCA}_0}}}
\newcommand{\WWKL}{{\ensuremath{\mathsf{WWKL}_0}}}
\newcommand{\WKL}{{\ensuremath{\mathsf{WKL}_0}}}
\newcommand{\ACA}{{\ensuremath{\mathsf{ACA}_0}}}
\newcommand{\ATR}{{\ensuremath{\mathsf{ATR}_0}}}
\newcommand{\oneone}{\PI11-{\ensuremath{\mathsf{CA}_0}}}
\newcommand{\AC}{\SI11-\ensuremath{\mathsf{AC}_0}}
\newcommand{\DC}{\SI11-\ensuremath{\mathsf{DC}_0}}
\newcommand{\ATRl}[1]{\ensuremath{\mathrm{ATR}_0^{#1}}}
\newcommand{\NCF}{\ensuremath{\mathsf{NCF}}}
\newcommand{\REC}{\ensuremath{\mathbf{REC}}}
\newcommand{\si}[1]{\ensuremath{\sigma \conc \langle {#1} \rangle}}
\theoremstyle{plain}
\newtheorem{theorem}{Theorem}[section]
\newtheorem*{claim}{Claim}
\newtheorem{nclaim}{Claim}
\newtheorem{lemma}[theorem]{Lemma}
\newtheorem{corollary}[theorem]{Corollary}
\theoremstyle{definition}
\newtheorem{definition}[theorem]{Definition}
\begin{document}

\begin{abstract}
In this paper we study the reverse mathematics of two theorems by Bonnet
about partial orders. These results concern the structure and cardinality
of the collection of the initial intervals. The first theorem states that a
partial order has no infinite antichains if and only if its initial
intervals are finite unions of ideals. The second one asserts that a
countable partial order is scattered and does not contain infinite
antichains if and only if it has countably many initial intervals. We show
that the left to right directions of these theorems are equivalent to \ACA\
and \ATR, respectively. On the other hand, the opposite directions are both
provable in \WKL, but not in \RCA. We also prove the equivalence with \ACA\
of the following result of Erd\"{o}s and Tarski: a partial order with no
infinite strong antichains has no arbitrarily large finite strong
antichains.
\end{abstract}

\maketitle

\tableofcontents


\section{Introduction}

In this paper we study from the viewpoint of reverse mathematics some
theorems dealing with the structure and the cardinality of the collection of
initial intervals (also called downward closed subsets) in a partial order.
Recall that an ideal is an initial interval such that every pair of elements
is compatible (i.e.\ has a common upper bound) in the interval.

The first result is a characterization of partial orders with no infinite
antichains in terms of the decomposition of initial intervals into union of
ideals. It is due to Bonnet \cite[Lemma 2]{Bon73} and can be found in
Fra\"iss\'{e}'s monograph \cite[\S4.7.2]{Fra00}:

\begin{theorem}\label{Bonnet1}
A partial order has no infinite antichains if and only if every initial
interval is a finite union of ideals.
\end{theorem}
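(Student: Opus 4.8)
The plan is to prove the two implications separately; only the left-to-right one is substantial, so I would first get the other one out of the way by contraposition. If $A=\{a_0,a_1,\dots\}$ is an infinite antichain, its downward closure $J=\{x:\exists i\ x\le a_i\}$ is an initial interval, hence by hypothesis $J=J_1\cup\dots\cup J_n$ with each $J_k$ an ideal; by the pigeonhole principle some $J_k$ contains two distinct $a_i,a_j\in A$, and since $J_k$ is directed they have a common upper bound $z\in J_k\subseteq J$, so $z\le a_\ell$ for some $\ell$, forcing $a_i\le a_\ell$ and $a_j\le a_\ell$ and hence $i=\ell=j$, a contradiction.

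For the left-to-right direction I would fix an initial interval $I$ and note that, since a directed initial interval of $P$ contained in $I$ is the same as an ideal of the partial order $I$ (which still has no infinite antichain), it suffices to show that any partial order $Q$ with no infinite antichain is a finite union of ideals. Here I would lean on two observations: first, that in any partial order two elements are compatible iff they lie in a common ideal (a common upper bound $z$ generates the principal ideal $\{x:x\le z\}$ containing both); second, that a set of pairwise incompatible elements — a \emph{strong antichain} — is in particular an antichain, since $x<y$ makes $y$ a common upper bound of $x$ and $y$. Hence $Q$ has no infinite strong antichain, so by the result of Erd\"{o}s and Tarski quoted in the abstract there is some $m\in\N$ bounding the sizes of all strong antichains of $Q$.

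It then remains to prove, by induction on $m$, the purely combinatorial statement that a partial order with no strong antichain of size $>m$ is a finite union of ideals. For $m=1$ every two elements are compatible, so $Q$ is directed, i.e.\ a single ideal. For the inductive step, if $Q$ has no strong antichain of size $>m-1$ I would invoke the inductive hypothesis; otherwise I would fix a strong antichain $\{x_1,\dots,x_m\}$ and observe that every $y\in Q$ is compatible with some $x_i$ (else $\{x_1,\dots,x_m,y\}$ is a strong antichain of size $m+1$), so $Q=D_1\cup\dots\cup D_m$ with $D_i=\{y\in Q:y\text{ compatible with }x_i\}$ downward closed. The crux is to show each $D_i$ has no strong antichain of size $>m-1$: given such a strong antichain $\{w_1,\dots,w_m\}$ of $D_i$, choose common upper bounds $z_\ell\ge w_\ell,x_i$; then the $z_\ell$ are pairwise incompatible (a common upper bound of $z_\ell,z_{\ell'}$ would lie above $x_i$, hence in $D_i$, and bound $w_\ell,w_{\ell'}$ inside $D_i$), each $z_\ell$ is incompatible with every $x_j$ for $j\ne i$ (a common upper bound would bound $x_i$ and $x_j$), and $z_\ell\ne x_j$ since $z_\ell\ge x_i$; so $\{z_1,\dots,z_m\}\cup\{x_j:j\ne i\}$ is a strong antichain of size $2m-1>m$, a contradiction. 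The inductive hypothesis then makes each $D_i$ a finite union of ideals — and these are ideals of $Q$ since $D_i$ is downward closed in $Q$ — so $Q$ is one too.

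The step I expect to be the main obstacle is the appeal to Erd\"{o}s--Tarski: extracting a uniform finite bound on finite strong antichains from the mere absence of an infinite one is not a König's-lemma compactness argument (strong antichains do not organise into a tree), and this is exactly what should make the left-to-right direction equivalent to \ACA\ rather than provable in \RCA. The right-to-left direction, elementary though the argument above is, should also fail in \RCA, since already forming the downward closure of the antichain as a set and selecting the relevant ideal goes beyond $\Delta^0_1$ comprehension; it should nonetheless be provable in \WKL.
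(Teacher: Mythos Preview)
Your proof is correct and follows the same overall strategy as the paper: for the right-to-left direction you give the downward-closure argument (which the paper also records as the obvious \ACA\ proof before refining it to \WKL\ via \SI01 initial interval separation), and for the left-to-right direction you reduce to Erd\"{o}s--Tarski and then decompose $Q$ into the compatibility classes $D_i=\{y: y\text{ compatible with }x_i\}$ of a maximum-size strong antichain.

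The one difference worth noting is that your induction on $m$ is an unnecessary detour. The paper's Lemma~\ref{lemma 2} shows directly that each $D_i$ is already an ideal, and in fact your own lifting argument proves this: run it with a $2$-element strong antichain $\{w_1,w_2\}$ of $D_i$ instead of an $m$-element one, and you obtain the strong antichain $\{z_1,z_2\}\cup\{x_j:j\neq i\}$ of size $m+1$ in $Q$, which is exactly the contradiction the paper uses. So the decomposition $Q=\bigcup_i D_i$ is already a decomposition into ideals, with no recursion needed. Your closing remarks on the reverse-mathematical strength of the two directions match the paper's conclusions.
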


In \cite{PS} Theorem \ref{Bonnet1} is attributed to Erd\"{o}s and Tarski because
its \lq hard\rq\ (left to right) direction can be deduced quite easily from
the following result, which is part of \cite[Theorem 1]{ErdTar43}:

\begin{theorem}\label{ET1}
If a partial order has no infinite strong antichains then it has no
arbitrarily large finite strong antichains.
\end{theorem}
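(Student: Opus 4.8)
The plan is to prove the contrapositive: if a partial order $P$ has arbitrarily large finite strong antichains, then it has an infinite one. Throughout, write $a \perp b$ for ``$a$ and $b$ have no common upper bound'', so that a strong antichain is a set of pairwise $\perp$ elements. Two elementary observations drive everything. First, $\perp$ is monotone: if $a \perp b$, $a \preceq a'$ and $b \preceq b'$, then $a' \perp b'$, since a common upper bound of $a'$ and $b'$ would also be one of $a$ and $b$. Second, a ``push-up'' principle: if $a \perp b$ and $c, c'$ are upper bounds of $a$ and $b$ respectively, then $c \perp c'$ and moreover $c \ne c'$; once more, a common upper bound of $c$ and $c'$ (or the common value $c = c'$) would be a common upper bound of $a$ and $b$.

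The heart of the proof is the following lemma. Suppose $Q$ is $P$ itself or a set of the form $\ups m$, and $Q$ has arbitrarily large finite strong antichains; let $M = \{\mu_1, \dots, \mu_r\}$ be a maximal strong antichain of $Q$. Then $\ups{\mu_j}$ has arbitrarily large finite strong antichains for some $j \le r$. To prove it, fix a strong antichain $A \subseteq Q$ and, using maximality of $M$, for each $a \in A$ choose $\mu_{i(a)} \in M$ compatible with $a$ (taking $\mu_{i(a)} = a$, and $c_a = a$, if $a \in M$) and an upper bound $c_a$ of both. Grouping by the value $i(a)$, some class $A_j$ has at least $|A|/r$ elements; by the push-up principle the $c_a$ with $a \in A_j$ are pairwise $\perp$ and pairwise distinct, and all but at most one of them (the possible exception being $\mu_j$ itself) lie in $\ups{\mu_j}$. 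Thus $\ups{\mu_j}$ has a strong antichain of size at least $|A|/r - 1$; letting $|A|$ grow and noting that $r$ is fixed, a single $j$ works for all sizes.

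Now assume toward a contradiction that $P$ has arbitrarily large finite strong antichains but no infinite one; then all maximal strong antichains are finite. Extend a two-element strong antichain of $P$ to a maximal one and apply the lemma to obtain $m_0$ with $\ups{m_0}$ having arbitrarily large finite strong antichains. Given such an $m_i$, extend a two-element strong antichain of $\ups{m_i}$ to a maximal strong antichain $M_i$ of $\ups{m_i}$; it is finite and has at least two elements, so the lemma provides some $m_{i+1} \in M_i$ with $\ups{m_{i+1}}$ having arbitrarily large finite strong antichains, and we may also pick $a_i \in M_i$ with $a_i \ne m_{i+1}$. Then $m_0 \prec m_1 \prec \cdots$ (since $m_{i+1} \in \ups{m_i}$), each $a_i \succ m_i$, and $a_i \perp m_{i+1}$ because $a_i$ and $m_{i+1}$ both belong to the strong antichain $M_i$. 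For $i < i'$ we get $a_{i'} \succeq m_{i'} \succeq m_{i+1}$, hence $a_i \perp a_{i'}$ by monotonicity; so $\{a_i : i \in \N\}$ is an infinite strong antichain, a contradiction.

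The main obstacle is the lemma, and within it the interplay of the push-up map $a \mapsto c_a$ with the pigeonhole extraction of a single index $j$ that works uniformly in $|A|$; everything else is bookkeeping. For the reverse-mathematics claim it then remains to check that the construction is carried out in \ACA: the predicate ``$\ups q$ has arbitrarily large finite strong antichains'' is arithmetical (so the relevant sets exist), a maximal strong antichain extending a given finite one is obtained by a $\Pi^0_1$ recursion since ``the finite set $F$ is a strong antichain'' is $\Pi^0_1$, and the sequences $(m_i)$ and $(a_i)$ are then defined by an $\omega$-length recursion relative to a fixed arithmetical oracle.
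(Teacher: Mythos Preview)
Your proof is correct and follows essentially the same strategy as the paper's proof (Lemma~\ref{lemma 4}): both argue by contradiction, both isolate the key lemma that from a finite maximal strong antichain one can pass to an upper cone still containing arbitrarily large finite strong antichains (via the push-up map and pigeonhole, then infinite pigeonhole to fix a single index), and both then recursively build an increasing chain $m_0 \prec m_1 \prec \cdots$ together with side elements $a_i \perp m_{i+1}$ that form the desired infinite strong antichain. The only cosmetic differences are that you work with strict cones $\ups m$ (hence the ``$-1$'' to discard a possible $c_a = \mu_j$) while the paper uses $\up x$, and your $(m_{i+1}, a_i)$ play the role of the paper's $(x_{n+1}, y_{n+1})$.
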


Here, by strong antichain we mean a set of pairwise incompatible (and not
only incomparable, as in antichain) elements. (Notice that Erd\"{o}s and Tarski
work with what we would call filters and final intervals.)

An intermediate step between Theorems \ref{ET1} and \ref{Bonnet1} is the
following characterization of partial orders with no infinite strong
antichains:

\begin{theorem}\label{ET2}
A partial order has no infinite strong antichains if and only if it is a
finite union of ideals.
\end{theorem}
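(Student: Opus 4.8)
The right-to-left direction is the easy one and should be provable already in \RCA: if $P = I_1 \cup \dots \cup I_n$ with each $I_j$ an ideal, then any strong antichain $A$ meets each $I_j$ in at most one element (two elements of $I_j$ have a common upper bound inside $I_j$, so they are compatible), whence $|A| \le n$ and $P$ has no infinite strong antichain.

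For the converse, assume $P$ has no infinite strong antichain. The plan is to first invoke Theorem~\ref{ET1} to get a $k \in \N$ bounding the size of every strong antichain of $P$, and then prove by induction on $k$ that $P$ is a union of finitely many ideals. The base case $k=1$ is immediate: any two elements of $P$ are then compatible, so $P$ is directed and is itself an ideal. For the inductive step I would assume the statement for partial orders whose strong antichains have at most $k-1$ elements, take $P$ with strong antichains of size at most $k \ge 2$, and --- applying the inductive hypothesis at once if $P$ has no strong antichain of size exactly $k$ --- fix a strong antichain $\{a_1,\dots,a_k\}$ of maximum size $k$. Putting $a = a_1$, I would split $P = C_a \cup F_a$, where $C_a$ is the set of elements of $P$ compatible with $a$ and $F_a$ is the set of those incompatible with $a$; note that $C_a$ is an initial interval of $P$ and $F_a$ a final interval.

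The heart of the argument is the claim that \emph{both} $C_a$ and $F_a$, regarded as partial orders in their own right, have strong antichains of size at most $k-1$, so that the inductive hypothesis applies to each. For $F_a$ this is easy: since $F_a$ is upward closed, a common upper bound in $P$ of two of its elements already lies in $F_a$, so a strong antichain of the partial order $F_a$ is a strong antichain of $P$; adjoining $a$ to it (every element of $F_a$ is incompatible with $a$) produces a strong antichain of $P$ with one more element. For $C_a$ this is the delicate point, and it is where choosing $a$ inside a \emph{maximum} strong antichain is used: given a strong antichain $c_1,\dots,c_k$ of the partial order $C_a$, pick $z_i \in P$ with $c_i \preceq z_i$ and $a \preceq z_i$; one checks that the $z_i$ are pairwise incompatible in $P$ --- a common upper bound of $z_i$ and $z_j$ would lie above $a$, hence in $C_a$, and would bound $c_i$ and $c_j$ inside $C_a$ --- and that $a_2$ is incompatible in $P$ with each $z_i$ --- a common upper bound of $a_2$ and $z_i$ would lie above $a_1$, making $a_1$ and $a_2$ compatible. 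The same observations show the $z_i$ are distinct and different from $a_2$, so $\{a_2,z_1,\dots,z_k\}$ is a strong antichain of $P$ of size $k+1$, a contradiction. I expect this claim about $C_a$ to be the main obstacle.

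Finally I would transfer the decompositions back to $P$. By the inductive hypothesis $C_a$ is a finite union of ideals of the partial order $C_a$, and, $C_a$ being an initial interval of $P$, each of these is already an ideal of $P$. Likewise $F_a = K_1 \cup \dots \cup K_r$ with each $K_j$ an ideal of the partial order $F_a$; here $K_j$ need not be an initial interval of $P$, but its downward closure in $P$ is, and is still directed because the upper bounds witnessing directedness of $K_j$ lie in $F_a$ and hence in the closure --- so it is an ideal of $P$ containing $K_j$. Putting all these ideals together covers $P$, which completes the induction. One should then check that this construction is arithmetical, so that, granting Theorem~\ref{ET1}, the whole argument goes through in \ACA. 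Conversely, since the right-to-left direction shows that a finite union of $n$ ideals has strong antichains of size at most $n$, the left-to-right direction implies Theorem~\ref{ET1} over \RCA, so it should in fact be equivalent to \ACA.
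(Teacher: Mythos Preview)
Your proof is correct, and the right-to-left direction matches the paper's Lemma~\ref{ET2<-}. For the left-to-right direction both you and the paper first invoke Theorem~\ref{ET1} to get a bound $\ell$ on the size of strong antichains, but from there the routes diverge. The paper (Lemma~\ref{lemma 2}) fixes a maximum strong antichain $S$ of size $\ell$ and shows \emph{directly} that for each $z\in S$ the set $A_z=\{x\in P: x$ compatible with $z\}$ is an ideal: given $x,y\in A_z$, lift to $x_0,y_0\succeq z$; if $x_0,y_0$ were incompatible in $P$ then $(S\setminus\{z\})\cup\{x_0,y_0\}$ would be a strong antichain of size $\ell+1$. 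This yields a decomposition into exactly $\ell$ ideals --- the optimal number --- in one step, with no induction.

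Your approach instead proceeds by induction on the bound $k$, splitting $P=C_a\cup F_a$ and showing each piece has strong antichains of size at most $k-1$. This works, but is more roundabout and produces more ideals than necessary. In fact your own argument for the $C_a$ bound, pushed slightly, recovers the paper's shortcut: starting from just \emph{two} elements $c_1,c_2\in C_a$ incompatible in $C_a$, your lifts $z_1,z_2$ are incompatible in $P$ and incompatible with each of $a_2,\dots,a_k$ (the argument you gave for $a_2$ works verbatim for every $a_j$ with $j\ge 2$), so $\{z_1,z_2,a_2,\dots,a_k\}$ is a strong antichain of size $k+1$. Hence $C_a$ is already an ideal, the induction collapses, and you arrive at the paper's proof. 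Your remarks on the reverse-mathematical strength are in line with Theorem~\ref{equivACA}.
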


Our proof of Lemma \ref{lemma 2} shows how to deduce the left to right
direction of Theorem \ref{ET2} from Theorem \ref{ET1}.

In \cite{Bon73} Theorem \ref{Bonnet1} is a step in the proof of the following
result, which is also featured in Fra\"{\i}ss\'{e}'s monograph
\cite[\S6.7]{Fra00}:

\begin{theorem}\label{Bonnet2}
If an infinite partial order $P$ is scattered (i.e.\ there is no embedding of
the rationals into $P$) and has no infinite antichains, then the set of
initial intervals of $P$ has the same cardinality of $P$.
\end{theorem}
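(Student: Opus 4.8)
The plan is the following. Since $P$ is countably infinite, the conclusion amounts to the assertion that $P$ has only countably many initial intervals: there are at least $|P|$ of them, because $p\mapsto\down{p}$ is injective, so only the upper bound is at issue. For this I would first invoke Theorem~\ref{Bonnet1}, so that every initial interval of $P$ is a finite union of ideals. The map sending a finite tuple of ideals to their union is then onto the collection of initial intervals, and since the finite tuples over a countable set form a countable set, it suffices to prove that $P$ has only countably many ideals. (Only \ACA\ is needed up to this point.)

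Next I would describe the ideals themselves. An ideal $I$ either has a greatest element, in which case $I=\down{a}$ is principal, or it has no maximal element, and then a standard absorption-and-diagonalization argument — using repeatedly that every infinite subset of $I$ contains an infinite chain, because $I$ has no infinite antichain (by the chain–antichain principle \CAC, which is provable in \ATR) — produces a cofinal chain in $I$. Such a chain is a countable linear order without greatest element, hence of cofinality $\omega$, so $I=\bigcup_n\down{c_n}$ for some strictly increasing sequence $(c_n)_n$. Since there are only countably many principal ideals, it remains to bound the number of non-principal ideals.

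Bounding the non-principal ideals is the heart of the matter, and it is here that the strength of \ATR\ enters. I would stratify the ideals by a rank attached to the scattered structure of $P$: principal ideals have rank $0$; an ideal has rank $\le\alpha+1$ if it is $\bigcup_n J_n$ for a strictly increasing sequence $(J_n)_n$ of ideals of rank $\le\alpha$; and at limit stages one takes unions. One then uses \ATR\ to form this rank function, to show it is everywhere defined, and to run the resulting transfinite recursion, which halts at a countable ordinal exactly because $P$ is scattered. The crucial — and most delicate — point is that only countably many ideals are introduced at each successor step, and I expect this to be the main obstacle. Unlike scattered linear orders, where Hausdorff's classification does the work, scattered partial orders with no infinite antichains admit no comparably simple normal form, so the rank and the per-stage bound cannot be read off a decomposition but must be teased out by hand, combining scatteredness (to limit the ways a "limit" ideal can arise) with the finite-antichain hypothesis (which always makes the set of minimal elements of the complement of an ideal a finite antichain, and is exactly what underlies Theorem~\ref{Bonnet1}). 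It is precisely the transfinite recursion underlying this step that pins the statement at the level of \ATR; the reductions of the first two paragraphs go through in much weaker systems.
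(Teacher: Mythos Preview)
First, a framing remark: the paper does not itself prove Theorem~\ref{Bonnet2} in full generality; it is quoted from Bonnet. What the paper does prove is the countable instance, stated as the forward direction of Theorem~\ref{Bonnet3} and established in \ATR\ as Theorem~\ref{Bonnet3->}. Since you immediately restrict to countable $P$, that is the proof against which your proposal should be compared.

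Your reduction to counting ideals via Theorem~\ref{Bonnet1} is sound, and the observation that every non-principal ideal of a countable directed set has a cofinal $\omega$-chain is correct. The problem is the rank argument. With the rank you define, every non-principal ideal already has rank $\le 1$: if $I=\bigcup_n \down{c_n}$ with $(c_n)_n$ strictly increasing, then each $\down{c_n}$ is principal, hence of rank $0$, so $I$ has rank $\le 1$ by your own successor clause. The hierarchy therefore collapses after a single step, and proving ``only countably many new ideals at the successor stage'' is exactly the original problem of counting the non-principal ideals. Nothing in your rank definition refers to scatteredness, so there is no mechanism by which that hypothesis could enter the induction; your claim that the recursion ``halts at a countable ordinal exactly because $P$ is scattered'' has no content for this rank. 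The obstacle you flag is not merely delicate: as formulated, the inductive scheme achieves no reduction at all.

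The paper's argument (Theorem~\ref{Bonnet3->}) is entirely different and does not pass through ideals or Theorem~\ref{Bonnet1}. It proceeds by contraposition. If $P$ has uncountably many initial intervals, the perfect tree theorem (Theorem~\ref{ptt}) gives perfectly many, and then Lemma~\ref{pm to um} is iterated---inside a countable coded $\omega$-model of \DC\ and \ATRl{P} supplied by Corollary~\ref{model existence}---to grow a ternary tree recording at each node an element $x$ of the current subposet together with the alternative ``pass to $\inc x$'' versus ``split into $\downs x$ and $\ups x$''. Any path through this tree either chooses the $\inc x$ option infinitely often, producing an infinite antichain, or eventually always splits, in which case the recorded elements form a dense linear suborder of $P$. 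Either way $P$ fails one of the two hypotheses. This is a direct construction: there is no transfinite induction on a rank, and \ATR\ enters only through the perfect tree theorem and the existence of the $\omega$-model inside which the recursion is carried out.
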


The converse of Theorem \ref{Bonnet2} is in general false, but it holds when
$|P|<2^{\aleph_0}$, and in particular when $P$ is countable:

\begin{theorem}\label{Bonnet3}
A countable partial order is scattered and has no infinite antichains if and
only if it has countably many initial intervals.
\end{theorem}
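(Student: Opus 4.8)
The plan is to prove the two implications separately. The substance of the left-to-right direction is exactly Theorem~\ref{Bonnet2}, so that half is essentially free; the converse is a short argument that produces continuum many distinct initial intervals from either of the two forbidden configurations.

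For the direction ``scattered and no infinite antichains $\Rightarrow$ countably many initial intervals'': if $P$ is infinite, Theorem~\ref{Bonnet2} gives that the set of initial intervals of $P$ has the same cardinality as $P$, hence is countable; and if $P$ is finite it has at most $2^{|P|}$ initial intervals, so again countably many.

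For the converse I would argue by contraposition, showing that if $P$ is not scattered or contains an infinite antichain then $P$ has $2^{\aleph_0}$ initial intervals, in particular uncountably many. Suppose first that $\{a_n : n\in\N\}$ is an infinite antichain in $P$, the $a_n$ being distinct and pairwise incomparable. For $S\subseteq\N$ set $I_S=\Down\{a_n : n\in S\}$, an initial interval; since the $a_n$ are pairwise incomparable, $a_m\in I_S$ if and only if $m\in S$, so $S\mapsto I_S$ is injective and $P$ has at least $2^{\aleph_0}$ initial intervals. Suppose instead $P$ is not scattered and fix an embedding $f\colon\Q\to P$ (recall that an embedding of partial orders reflects the order: $f(x)\le f(y)$ implies $x\le y$). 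There are $2^{\aleph_0}$ downward closed subsets of $\Q$ (for instance the cuts $\{q\in\Q : q<r\}$ for $r\in\R$); to each such $C$ associate $J_C=\Down f(C)$. If $C\neq C'$, pick $q\in C\setminus C'$: then $f(q)\in J_C$, whereas $f(q)\notin J_{C'}$, since $f(q)\le f(q')$ for some $q'\in C'$ would force $q\le q'$ and hence $q\in C'$ by downward closure of $C'$. Thus $C\mapsto J_C$ is injective and again $P$ has $2^{\aleph_0}$ initial intervals.

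Since $2^{\aleph_0}>\aleph_0$, in either case $P$ fails to have countably many initial intervals, completing the contrapositive and the proof. I do not expect a genuine obstacle: once Theorem~\ref{Bonnet2} is granted the forward direction is immediate, and the only points that need care are the two injectivity checks in the converse and the trivial separate treatment of finite $P$.
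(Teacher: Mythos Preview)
Your argument is correct as a classical proof, and each of the injectivity checks is fine. One small remark: in the non-scattered case you quietly use that an embedding of partial orders reflects the order; this is exactly the paper's definition of embedding, so there is no gap.

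However, the paper does not give a classical proof of Theorem~\ref{Bonnet3} at all. The theorem is stated in the introduction as a known result of Bonnet, and the body of the paper is a reverse-mathematics analysis of its two directions. There the picture is quite different from your approach. For the forward direction the paper does \emph{not} invoke Theorem~\ref{Bonnet2}; instead it proves directly in \ATR\ (Theorem~\ref{Bonnet3->}) that a partial order with uncountably many initial intervals is non-scattered or contains an infinite antichain, by building a ternary tree of finite constraint sets via Lemma~\ref{pm to um} and a countable coded $\omega$-model. For the backward direction the paper splits the conclusion in two: ``countably many initial intervals $\Rightarrow$ scattered'' is proved in \RCA\ (Theorem~\ref{Bonnet3<-RCA}) by constructing a perfect subtree of $T(P)$ from a dense suborder, and ``countably many initial intervals $\Rightarrow$ no infinite antichain'' is proved in \WKL\ (Theorem~\ref{Bonnet3<-WKL}) via initial interval separation. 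Your downward-closure constructions $I_S$ and $J_C$ are essentially the ``obvious proofs in \ACA'' that the paper alludes to at the start of Section~\ref{section:<-} and then works to replace by arguments in weaker systems. So your proof is the right classical one, but it is not what the paper is doing.
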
\bigskip

The program of reverse mathematics (\cite{sosoa} is the basic reference)
gauges the strength of mathematical theorems by means of the subsystems of
second order arithmetic necessary for their proofs. This approach allows only
the study of statements about countable (or countably coded) objects. We
therefore study the strength of Theorem \ref{Bonnet3} and of the
restrictions of Theorems \ref{Bonnet1}, \ref{ET1} and \ref{ET2} to countable
partial orders. We notice that \cite{ErdTar43,Bon73,Fra00} put no restriction
on the cardinality of the partial order and therefore often use set-theoretic
techniques which are not available in (subsystems of) second order
arithmetic. On the other hand we can always assume that the partial orders
are defined on a subset of the set of the natural numbers, and this is on
occasion helpful.

Since Theorems \ref{Bonnet1}, \ref{ET2}, and \ref{Bonnet3} are equivalences,
we study separately the two implications, which turn out to have different
axiomatic strengths. In particular, the \lq easy\rq\ (right to left)
directions of Theorems \ref{Bonnet1} and \ref{Bonnet3} are quite interesting
from the viewpoint of reverse mathematics and we are not able to settle the
problem of establishing their strength, leaving open the possibility that
they have strength intermediate between \RCA\ and \WKL.\medskip

We assume familiarity with the \lq big five\rq\ of reverse mathematics,
namely, in order of increasing strength, \RCA, \WKL, \ACA, \ATR, and
\oneone.\medskip

We now state our main results and at the same time describe the organization
of the paper. In section \ref{section:preliminaries} we establish our
notation and terminology and recall some basic results. In section
\ref{section:iis&eu} we prove a couple of technical lemmas that are useful
later on.\smallskip

In Section \ref{section:->} we consider Theorem \ref{ET1} and the left to
right directions of Theorems \ref{Bonnet1}, \ref{ET2}, and \ref{Bonnet3}.
Subsection \ref{section:ACA} culminates in Theorem \ref{equivACA} where we
prove, over \RCA, the equivalence of \ACA\ with each of the three statements:
\begin{itemize}
 \item in a countable partial order with no infinite antichains every
     initial interval is a finite union of ideals;
 \item in a countable partial order with no infinite strong antichains
     there is a bound on the size of the strong antichains;
 \item every countable partial order with no infinite strong antichains is
     a finite union of ideals.
\end{itemize}

In subsection \ref{section:uncountably} we show that the statement
\begin{itemize}
 \item every countable partial order which is scattered and has no infinite
     antichains has countably many initial intervals.
\end{itemize}
is equivalent to \ATR\ over \ACA\ (Theorem \ref{Bonnet3->ATR}). To obtain the
reversal we slightly modify a proof in \cite{Clo89}.\smallskip

In section \ref{section:<-} we deal with the right to left directions of
Theorems \ref{Bonnet1}, \ref{ET2}, and \ref{Bonnet3}, i.e.\ with the
statements:
\begin{itemize}
 \item if every initial interval of a countable partial order is a finite
     union of ideals, then the partial order has no infinite antichains;
 \item if a countable partial order is a finite union of ideals then it has
     no infinite strong antichains;
 \item if a countable partial order has countably many initial intervals,
     then it has no infinite antichains;
 \item if a countable partial order has countably many initial intervals,
     then it is scattered.
\end{itemize}
The obvious proofs of these statements go through in \ACA, but we show that
they are all provable in weaker systems. In fact \RCA\ proves the second and
fourth statement (Lemma \ref{ET2<-} and Theorem \ref{Bonnet3<-RCA}). On the
other hand, the first and third statement are both provable in \WKL\
(Theorems \ref{Bonnet1<-WKL} and \ref{Bonnet3<-WKL}) and fail in the
$\omega$-model of computable sets and hence cannot be proved in \RCA\
(Theorems \ref{Bonnet1<-nRCA} and \ref{Bonnet3<-nRCA}). Our results thus do
not completely determine the strength of these two statements.\smallskip

In Section \ref{section:problems} we briefly discuss the open problems left
by our results and mention some partial answers obtained by other authors
after a first draft of this paper was circulated.

\section{Terminology, notation, and basic facts}\label{section:preliminaries}

All definitions in this section are made in \RCA.

\subsection{Finite sequences and trees}
We typically use $\sigma$ and $\tau$ to denote finite sequences of natural
numbers, that is elements of $\N^{<\N}$. Often they belong to $2^{<\N}$,
i.e.\ they are binary, and in one occasion to $3^{<\N}$, i.e.\ they are
ternary. Let $|\sigma|$ be the length of $\sigma$ and list it as $\langle
\sigma(0), \dots, \sigma(|\sigma|-1) \rangle$. In particular $\langle
\rangle$ is the unique sequence of length $0$. We write $\sigma \sqsubseteq
\tau$ to mean that $\sigma$ is an initial segment of $\tau$, while $\sigma
\conc \tau$ denotes the concatenation of $\sigma$ and $\tau$. By $\sigma
\restr k$ we mean the initial segment of $\sigma$ of length $k$ and
similarly, when $f$ is a function, $f \restr k$ is the finite sequence
$\langle f(0), \dots, f(k-1) \rangle$.

A \emph{tree} $T$ is a set of finite sequences such that $\tau \in T$ and
$\sigma \sqsubseteq \tau$ imply $\sigma \in T$. A tree is \emph{pruned} if it
contains no endnodes, i.e.\ $(\forall \sigma \in T) (\exists \tau \in T)
\sigma \sqsubset \tau$. A \emph{path} in $T$ is a function $f$ such that for
all $n$ the finite sequence $f \restr n$ belongs to $T$. We write $[T]$ to
denote the collection of all paths in $T$: $[T]$ does not formally exists in
second order arithmetic, but $f \in [T]$ is a convenient shorthand.

A tree $T$ is \emph{perfect} if for all $\sigma \in T$ there exist $\tau_0,
\tau_1 \in T$ such that $\sigma \sqsubseteq \tau_0, \tau_1$ and neither
$\tau_0 \sqsubseteq \tau_1$ nor $\tau_1 \sqsubseteq \tau_0$ hold. A tree $T$
has \emph{countably many paths} if there exists a sequence $\{f_n \colon n
\in \N\}$ (coded by a single set) such that for every $f \in [T]$ we have
$f=f_n$ for some $n \in \N$. If $T$ does not have countably many paths then
we say that it has \emph{uncountably many paths}.

By \cite[Theorem V.5.5]{sosoa} \ATR\ is equivalent to the perfect tree
theorem:

\begin{theorem}[\ACA]\label{ptt}
The following are equivalent:
\begin{enumerate}
 \item \ATR;
 \item every tree with uncountably many paths contains a perfect subtree.
\end{enumerate}
\end{theorem}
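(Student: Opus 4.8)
The plan is to establish the two implications separately, the direction $(2)\Rightarrow(1)$ being the harder one.

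For $(1)\Rightarrow(2)$ I would assume \ATR\ and, given a tree $T$ with uncountably many paths, run a Cantor--Bendixson analysis. For a tree $S$ and $\sigma\in S$ write $S_\sigma=\{\tau:\sigma\conc\tau\in S\}$, call $\rho$ a splitting node of $S$ when it has at least two immediate successors in $S$, and set $D(S)=\{\sigma\in S:S_\sigma\text{ contains a splitting node}\}$. Then $D(S)$ is a subtree of $S$, arithmetic in $S$, and for nonempty $S$ one has $D(S)=S$ exactly when every node of $S$ lies below a splitting node of $S$, i.e.\ exactly when $S$ is perfect. By arithmetical transfinite recursion along a suitable countable well-ordering I would form the nonincreasing sequence $T^{(0)}=T$, $T^{(\beta+1)}=D(T^{(\beta)})$, $T^{(\lambda)}=\bigcap_{\beta<\lambda}T^{(\beta)}$, and then use a bounding argument available in \ATR\ to reach a countable stage $\beta$ with $D(T^{(\beta)})=T^{(\beta)}$; put $W=T^{(\beta)}$. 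If $W=\emptyset$, then for each $f\in[T]$ there is a least stage $\gamma$ such that $f\restr n\notin T^{(\gamma)}$ for some $n$; since $f\restr n\in T^{(0)}=T$ for every $n$ and the limit stages are intersections, $\gamma$ is a successor $\delta+1$, and taking $n$ shortest with $f\restr n\notin T^{(\delta+1)}$ we get $f\restr n\in T^{(\delta)}$ while $T^{(\delta)}_{f\restr n}$ has no splitting node, so $f$ is the unique path of $[T^{(\delta)}]$ extending $f\restr n$. Hence $f\mapsto(\delta,f\restr n)$ injects $[T]$ into the countable set of pairs (stage, finite sequence), so $[T]$ is countable --- a contradiction. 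Therefore $W$ is nonempty, and then $W$ is a perfect subtree of $T$, as desired.

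For $(2)\Rightarrow(1)$ I would argue by contraposition over \ACA, along the lines of \cite[Theorem V.5.5]{sosoa}. Assuming \ATR\ fails, fix a witnessing instance --- a countable well-ordering and an arithmetical operator for which no transfinite iteration exists --- and use a pseudohierarchy to produce a linear ordering $L$ that is ill-founded (it has a long well-founded part but admits no descending sequence inside the model) together with a pseudo-iteration of the operator along $L$. From this data one builds a tree $T$ with uncountably many paths, the branching being forced by the genuine ambiguity in how a pseudo-iteration may be continued past the ill-founded part of $L$; and one arranges that no perfect subtree exists, by noting that a perfect subtree $S$ is pruned, so the path obtained by always moving to the leftmost immediate successor that lies in $S$ is computable from $S$, and then invoking a boundedness argument to show that any path of $T$ this simple would decode an honest transfinite iteration of the operator --- which by assumption does not exist. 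This produces a tree with uncountably many paths and no perfect subtree, which is what the contraposition demands.

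I expect the reverse direction to be the main obstacle: the pseudohierarchy construction, the verification that the tree so obtained has uncountably many paths precisely because no honest iteration exists, and the boundedness step excluding perfect subtrees are all delicate. They are exactly the ingredients of \cite[Theorem V.5.5]{sosoa}, and they rest on the theory of pseudo-well-orderings available over \ACA.
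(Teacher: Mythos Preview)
The paper does not give its own proof of this theorem: it is quoted from \cite[Theorem V.5.5]{sosoa} and stated without argument, so there is no in-paper proof to compare your proposal against. Your sketch is essentially an outline of the standard proof from that reference, and the overall architecture you describe for both directions is correct.

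A couple of remarks on the details. For $(1)\Rightarrow(2)$, your Cantor--Bendixson approach is sound, but the sentence ``use a bounding argument available in \ATR\ to reach a countable stage $\beta$ with $D(T^{(\beta)})=T^{(\beta)}$'' hides the only genuinely nontrivial point: one must first \emph{produce} a well-ordering long enough for the iteration to stabilize, and this is not automatic. In \ATR\ this is handled, for instance, by using \AC\ to pick for each node that is eventually removed a well-ordering witnessing its removal, and then using comparability of well-orderings (also available in \ATR) to dominate all these by a single well-order; the iteration along that order stabilizes. You should be aware that this step carries real content. For $(2)\Rightarrow(1)$, your pseudohierarchy outline is faithful to the standard argument; the delicate verifications you flag (uncountably many paths, no perfect subtree via a leftmost-path/boundedness contradiction) are exactly those carried out in \cite[Theorem V.5.5]{sosoa}.

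In short: your plan matches the cited source the paper relies on, and there is nothing further in the paper itself to compare with.
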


\subsection{Partial orders}
Within \RCA\ saying that $(P, {\preceq})$ is a partial order means that $P
\subseteq \N$ and ${\preceq} \subseteq P \times P$ is reflexive,
antisymmetric and transitive. As usual, we use $\prec$ to denote the strict
order. From now on we refer to $(P,{\preceq})$ simply as $P$. When we deal
with several partial orders at the same time, we use subscripts as in
$\preceq_P$ to distinguish between the relations.

Finite partial orders can easily be studied in \RCA\ and hence, whenever it
is convenient and without further notice, we assume that $P$ is infinite.

Every time we define a partial order $\preceq$ on a set $P$ we assume
reflexivity, and focus on explaining when distinct elements are related and
on checking transitivity.

We say that $x,y \in P$ are \emph{comparable} if $x \preceq y$ or $y \preceq
x$. If $x$ and $y$ are incomparable we write $x \perp y$. A partial order $P$
is a \emph{linear order} if all its elements are pairwise comparable. A
linear order $P$ is \emph{dense} if for all $x,y\in P$ such that $x\prec y$
there exists $z\in P$ with $x \prec z \prec y$.

A subset $D \subseteq P$ is an \emph{antichain} if all its elements are
pairwise incomparable, i.e.
\[  (\forall x,y\in D)(x \neq y \implies x \perp y).\]

We say that $x,y \in P$ are \emph{compatible in} $P$ if there is $z \in P$
such that $x \preceq z$ and $y \preceq z$. Notice that two elements of $P$
might be compatible in $P$ but not in some $X \subseteq P$ to which they
belong.

A subset $S \subseteq P$ is a \emph{strong antichain in} $P$ if its elements
are pairwise incompatible in $P$, i.e.
\[   (\forall x,y \in S)(\forall z \in P)(x,y \preceq z \implies x=y).\]

A subset $I \subseteq P$ is an \emph{initial interval} of $P$ if
\[     (\forall x, y\in P)(x \preceq y \land y \in I \implies x \in I).\]

An initial interval $A$ of $P$ is an \emph{ideal} if every two elements of
$A$ are compatible in $A$, i.e.
\[ (\forall x,y \in A)(\exists z \in A)(x \preceq z \land y \preceq z).\]

If $x \in P$ we let $\inc x = \{y \in P \colon x \perp y\}$ and define the
\emph{upper and lower cones} determined by $x$ setting
\[ \up x = \{y \in P \colon x \preceq y\} \text{ and } \down x = \{y \in P
\colon y \preceq x\}.\] $\ups x$ and $\downs x$ are defined in the obvious
way. If $X \subseteq P$ we write $\Down X$ for the \emph{downward closure} of
$X$, i.e.\ $\bigcup_{x \in X} \down x$. Notice that the existence of $\Down
X$ as a set is equivalent to \ACA\ over \RCA.

\subsection{Well-partial orders, scattered partial orders and lexicographic
sums}
A partial order $P$ is \emph{well-founded} if $P$ contains no infinite
descending sequence, i.e.\ no function $f\colon \N \to P$ such that $f(i)
\succ f(j)$ for all $i<j$. A well-founded linear order is a
\emph{well-order}.

A partial order $P$ is a \emph{well-partial order} if for every function
$f\colon \N \to P$ there exist $i<j$ such that $f(i)\preceq f(j)$. There are
many classically equivalent definitions of well-partial order. In particular
a well-partial order is a well-founded partial order with no infinite
antichains. For a reverse mathematics study of these equivalences we refer to
\cite{ChoMarSol04}. For our purposes, it is enough to know that all these
equivalences are provable in \ACA\ and that \RCA\ suffices to show that every
well-partial order is well-founded and has no infinite antichains.

The \emph{Kleene-Brouwer order} on finite sequences is the linear order
defined by $\sigma \leq_\KB \tau$ if either $\tau \sqsubseteq \sigma$ or
$\sigma(i)< \tau(i)$ for the least $i$ such that $\sigma(i) \neq \tau(i)$.
One of the main features of $\leq_\KB$ is that, provably in \ACA, its
restriction to a tree $T$ is a well-order if and only if $T$ has no paths
(\cite[Lemma V.1.3]{sosoa}).

An \emph{embedding} of a partial order $Q$ into a partial order $P$ is a
function $f \colon Q \to P$ such that for all $x,y \in Q$ we have $x
\preceq_Q y$ if and only if $f(x) \preceq_P f(y)$. A partial order $P$ is
\emph{scattered} if there is no embedding of $\Q$ (the order of the
rationals) into $P$.

\begin{lemma}[\RCA]\label{scatt}
A partial order is scattered if and only if it does not contain any dense
linear order.
\end{lemma}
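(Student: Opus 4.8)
The plan is to prove both directions by contraposition. First I would pin down the reading of the statement: \lq\lq$P$ contains a dense linear order\rq\rq\ should mean that there is a set $D\subseteq P$ with at least two elements such that $\preceq_P$ restricted to $D$ is a dense linear order (without the requirement $|D|\geq 2$ the statement is trivially false, since a singleton is vacuously dense). Throughout I fix a listing $q_0,q_1,\dots$ of the rationals and use, as provable in $\RCA$, that $\Q$ is a dense linear order with no endpoints.

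For \lq\lq not scattered $\Rightarrow$ contains a dense linear order\rq\rq, suppose we are given an embedding $f\colon\Q\to P$; it is injective because $\preceq_P$ is antisymmetric. The naive choice $D=\operatorname{ran}(f)$ does not work, since $\RCA$ does not prove that the range of an injection is a set; overcoming this is the only real obstacle. I would thin the copy of $\Q$: by primitive recursion relative to $f$, build an order-embedding $e\colon\Q\to\Q$ by letting $e(q_n)$ be the first term $r$ of the listing such that $e(q_i)<_\Q r\iff q_i<_\Q q_n$ for all $i<n$ and, moreover, $f(r)>f(e(q_i))$ for all $i<n$. Such an $r$ exists: the rationals satisfying the first batch of conditions form a nonempty open interval of $\Q$ (nonempty because $\Q$ is dense with no endpoints and $e$ preserves the order so far), hence an infinite set, so --- $f$ being injective --- it contains some $r$ with $f(r)$ exceeding the finitely many previous values (otherwise $f$ would inject an infinite set into a bounded subset of $\N$); and $r$ is located by a search that provably terminates. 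By construction $e$ is an order-embedding and $\langle f(e(q_n))\colon n\in\N\rangle$ is strictly increasing in $\N$, so $D:=\{f(e(q_n))\colon n\in\N\}$ is $\Delta^0_1$ --- to decide $y\in D$, compute $f(e(q_0)),f(e(q_1)),\dots$ until a value $\geq y$ appears --- hence a set; and $q_n\mapsto f(e(q_n))$ is an isomorphism of $\Q$ onto $(D,{\preceq_P}\restr D)$, so $D$ is a (necessarily infinite) dense linear order inside $P$.

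For \lq\lq contains a dense linear order $\Rightarrow$ not scattered\rq\rq, let $D\subseteq P$ with $|D|\geq 2$ be such that ${\preceq_P}\restr D$ is a dense linear order, fix $a\prec_P b$ in $D$, and put $D'=\{x\in D\colon a\prec_P x\prec_P b\}$, which is $\Delta^0_1$ in $D$ and $\preceq_P$, hence a set. Density of $D$ shows at once that $D'$ is nonempty and is a dense linear order with no endpoints. Then, carrying out the \lq\lq forth\rq\rq\ half of Cantor's back-and-forth argument by primitive recursion relative to $D'$ and $\preceq_P$, I would build an order-embedding $h\colon\Q\to D'$: let $h(q_n)$ be the $\N$-least element of $D'$ with $h(q_i)\prec_P h(q_n)\iff q_i<_\Q q_n$ for all $i<n$; such an element exists because $D'$ is dense and has no endpoints. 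Then $h$ embeds $\Q$ into $D'\subseteq P$, so $P$ is not scattered.

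The one delicate point, as indicated, is the range-extraction in the first direction: the image of the given embedding of $\Q$ need not be a set in $\RCA$, so it has to be replaced by a recursive sub-copy of $\Q$ with increasing natural-number codes. Everything else is routine: the \lq\lq forth\rq\rq\ construction is standard, the density claims about $D'$ are immediate, and the searches terminate using only that an infinite subset of $\N$ is unbounded (equivalently, pigeonhole), which is available in $\RCA$.
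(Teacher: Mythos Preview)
Your proposal is correct and follows essentially the same route as the paper's proof: for the direction starting from an embedding $f\colon\Q\to P$ you thin to a sub-copy on which $f$ has increasing $\N$-codes so that the range exists in \RCA, exactly as the paper does, and for the other direction you run the forth construction. You are somewhat more careful than the paper in two places --- you explicitly pass to an open interval of $D$ to guarantee no endpoints before running the forth argument, and you note that only the forth half is needed --- but these are refinements of the same argument rather than a different approach.
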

\begin{proof}
The left to right is immediate because \RCA\ suffices to carry out the usual
back-and-forth argument. For the other direction, given an embedding $f: \Q
\to P$ by recursion we can find $D \subseteq \Q$ dense such that $f$
restricted to $D$ is strictly increasing with respect to the ordering of the
natural numbers. Thus the range of $f$ restricted to $D$ exists in \RCA\ and
is a dense linear order.
\end{proof}

If $P$ is a partial order and $\{P_x \colon x \in P\}$ is a sequence of
partial orders indexed by $P$ we define the lexicographic sum of the $P_x$
along $P$, denoted by $\sum_{x \in P} P_x$, to be the partial order on the
set $Q = \{(x,y) \colon x \in P \land y \in P_x\}$ defined by
\[(x,y) \preceq_Q (x',y') \iff x \prec_P x' \lor (x=x' \land y \preceq_{P_x}
y').\]

\begin{lemma}[\RCA]\label{scattls}
The lexicographic sum of scattered partial orders along a scattered partial
order is scattered.
\end{lemma}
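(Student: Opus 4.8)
The plan is to argue by contradiction straight from the definition of scattered, so that the only sets we ever need are produced by composing the functions we are handed; in particular we never form the image of a function as a set, which is exactly the kind of move that can fail in \RCA. So suppose $P$ and every $P_x$ are scattered, put $Q=\sum_{x\in P}P_x$, and assume toward a contradiction that $g\colon\Q\to Q$ is an embedding. Let $\pi\colon Q\to P$ be the projection $\pi(x,y)=x$. First I would observe that $\pi\circ g$ is weakly monotone: if $p<_\Q q$ then $g(p)\prec_Q g(q)$, and the definition of $\preceq_Q$ then forces $\pi(g(p))\preceq_P\pi(g(q))$.

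Next I would split into two cases (this is just classical logic, so there is no set-existence concern). \emph{Case 1:} there exist $p<_\Q q$ with $\pi(g(p))=\pi(g(q))=:x$. A short squeezing computation using only the definition of $\preceq_Q$ shows that every rational $r$ with $p<_\Q r<_\Q q$ also satisfies $\pi(g(r))=x$, so $g$ sends the open interval $(p,q)_\Q$ into the single fibre $\{(x,y):y\in P_x\}$. Now fix an explicit order isomorphism $\iota\colon\Q\to(p,q)_\Q$; such a map is primitive recursive, hence available in \RCA, so no back-and-forth argument is needed here. Letting $h(r)$ be the second coordinate of $g(\iota(r))$, and using that inside one fibre $(x,y)\prec_Q(x,y')$ holds precisely when $y\prec_{P_x}y'$, one checks that $h$ is an embedding of $\Q$ into $P_x$, contradicting that $P_x$ is scattered.

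\emph{Case 2:} for all $p<_\Q q$ we have $\pi(g(p))\ne\pi(g(q))$. Then $\pi\circ g$ is injective, and an injective weakly monotone map is strictly monotone; a routine check using antisymmetry of $\preceq_P$ upgrades this to $p<_\Q q\iff\pi(g(p))\prec_P\pi(g(q))$, so $\pi\circ g$ is an embedding of $\Q$ into $P$, contradicting that $P$ is scattered. As both cases are impossible, no embedding $g\colon\Q\to Q$ exists, i.e.\ $Q$ is scattered.

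None of this is deep; the two points that need attention are (a) keeping everything inside \RCA, which is why the argument is phrased through compositions of the given functions and through the explicit $\iota$ rather than through images of sets or the back-and-forth characterization of $\Q$, and (b) the bookkeeping in Case 1 — verifying the squeezing claim and that $h$ really is an order embedding. One could instead route the argument through Lemma \ref{scatt}: Case 1 then becomes even smoother, since the relevant dense suborder of $P_x$ is literally $\Delta^0_1$, but Case 2 is cleaner as above, because the $\pi$-image of a dense suborder $D\subseteq Q$ need not exist as a set in \RCA.
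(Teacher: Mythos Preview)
Your proof is correct and follows essentially the same two-case argument as the paper: project to $P$, and either the projection collapses some interval (giving an embedding of $\Q$ into a fibre $P_x$) or it is injective (giving an embedding of $\Q$ into $P$). Two minor remarks: you only need an \emph{embedding} $\iota\colon\Q\to(p,q)_\Q$, not an isomorphism, and your caution about avoiding back-and-forth is unnecessary here since, as noted in the proof of Lemma~\ref{scatt}, the back-and-forth construction already goes through in \RCA.
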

\begin{proof}
Let $Q = \sum_{x \in P} P_x$ be a lexicographic sum and suppose that $Q$ is
not scattered. Fix an embedding $f \colon \Q \to Q$.

First suppose that for some $a <_\Q b$ and $x \in P$ we have $f(a)=(x,y)$ and
$f(b)=(x,y')$. Then, the composition of $f$ with the projection on the second
coordinate is an embedding of the rational interval $(a,b)_\Q$ into $P_x$.
Since $\Q$ embeds into its open intervals, $P_x$ is not scattered.

Otherwise, composing $f$ with the projection on the first coordinate, we
obtain an embedding of $\Q$ into $P$, and $P$ is not scattered.
\end{proof}

\subsection{The set of initial intervals}
We denote by $\Int(P)$ the collection of initial intervals of the partial
order $P$. In second order arithmetic, $\Int(P)$ does not formally exist, and
$I \in \Int(P)$ is a shorthand for the formula \lq\lq $I$ is an initial
interval of $P$\rq\rq. To study Theorem \ref{Bonnet3} we need to discuss the
cardinality of $\Int(P)$.

We say that the partial order $P$ has \emph{countably many initial intervals}
if there exists a sequence $\{I_n \colon n \in \N\}$ such that for every $I
\in \Int(P)$ we have $I=I_n$ for some $n \in \N$. Otherwise, we say that $P$
has \emph{uncountably many initial intervals}.

Within \ACA\ we can prove that, if $P$ has countably many initial intervals,
then there exists a sequence $\{I_n \colon n \in \N\}$ such that $I \in
\Int(P)$ if and only if there exists $n \in \N$ such that $I=I_n$. In this
case we write $\Int(P) = \{I_n \colon n \in \N\}$.

The partial order $P$ has \emph{perfectly many initial intervals} if there
exists a nonempty perfect tree $T \subseteq 2^{<\N}$ such that $[T] \subseteq
\Int(P)$, that is, for all $f \in [T]$, the set $\{x \in \N \colon f(x)=1\}
\in \Int(P)$.

A useful tool for studying the notions we just defined is the \emph{tree of
finite approximations of initial intervals} of the partial order $P$. We
define the tree $T(P) \subseteq 2^{<\N}$ by letting $\sigma \in T(P)$ if and
only if for all $x,y< |\sigma|$:
\begin{itemize}
 \item $\sigma(x)=1$ implies $x \in P$;
 \item $\sigma(y)=1$ and $x \preceq y$ imply $\sigma(x)=1$.
\end{itemize}
Notice that $T(P)$ is a pruned tree and that the paths in $T(P)$ are exactly
the characteristic functions of the initial intervals of $P$. From the latter
observation we easily obtain:

\begin{lemma}[\RCA]\label{T(P)}
Let $P$ be a partial order.
\begin{enumerate}[\quad(i)]
 \item $P$ has countably many initial intervals if and only if $T(P)$ has
     countably many paths;
 \item $P$ has perfectly many initial intervals if and only if $T(P)$
     contains a perfect subtree.
\end{enumerate}
\end{lemma}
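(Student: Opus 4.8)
The plan is to exploit the correspondence, noted just before the statement, between $[T(P)]$ and $\Int(P)$: a function $f \in 2^{\N}$ is a path in $T(P)$ if and only if $I_f = \{x \in \N \colon f(x)=1\}$ is an initial interval of $P$, and every initial interval $I$ arises in this way from its characteristic function $\chi_I \in [T(P)]$. The point to verify throughout is that this correspondence is effective enough to be carried out in \RCA: from a set coding a sequence of initial intervals one uniformly computes a set coding the sequence of their characteristic functions, and from a set coding a sequence of paths one uniformly computes a set coding the corresponding sequence of initial intervals; moreover, given a single initial interval $I$ (resp.\ a single path $f$), the set $\chi_I$ (resp.\ $I_f$) exists in \RCA.

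For (i), suppose first that $P$ has countably many initial intervals, witnessed by a sequence $\{I_n \colon n \in \N\}$. Then $\{\chi_{I_n} \colon n \in \N\}$ exists in \RCA, and given any $f \in [T(P)]$ the set $I_f$ exists, is an initial interval, hence equals some $I_n$, so $f = \chi_{I_n}$; thus $T(P)$ has countably many paths. Conversely, from a sequence $\{f_n \colon n \in \N\}$ enumerating $[T(P)]$ form $\{I_{f_n} \colon n \in \N\}$; given $I \in \Int(P)$, the function $\chi_I$ is a path, so $\chi_I = f_n$ for some $n$, whence $I = I_{f_n}$.

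For (ii), one direction is immediate: if $S \subseteq T(P)$ is a nonempty perfect tree, then $[S] \subseteq [T(P)]$, so $I_f \in \Int(P)$ for every $f \in [S]$, which is exactly the statement that $P$ has perfectly many initial intervals. For the other direction, let $S$ be a nonempty perfect tree with $I_f \in \Int(P)$ for all $f \in [S]$. I would first observe that in \RCA\ every node of a perfect tree lies on a path: given $\sigma \in S$, repeatedly choose incomparable extensions in $S$ and follow one of them, obtaining a strictly increasing sequence of nodes of $S$ whose union is a path through $\sigma$. Consequently, for each $\sigma \in S$ there is $f \in [S]$ with $\sigma \sqsubseteq f$; since $I_f \in \Int(P)$ we have $f \in [T(P)]$, hence $\sigma = f \restr |\sigma| \in T(P)$. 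Therefore $S \subseteq T(P)$, so $S$ is a perfect subtree of $T(P)$.

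The only genuine subtlety, and the step I would flag, is the argument in (ii) that a perfect witness tree $S$ — which a priori need not be a subtree of $T(P)$ — can be replaced by one that is; this is taken care of by the elementary \RCA\ construction of a path through a prescribed node of a perfect tree. Everything else is a routine unwinding of the definitions together with the bijection $f \mapsto I_f$.
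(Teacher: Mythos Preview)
Your proof is correct. The paper does not give a proof of this lemma at all; it simply records that the statement follows from the observation that the paths of $T(P)$ are exactly the characteristic functions of initial intervals of $P$. Your write-up supplies the details faithfully, and you correctly isolate the one point that is not a pure tautology: in part (ii), the definition of ``perfectly many initial intervals'' only asks for a perfect $S \subseteq 2^{<\N}$ with $[S] \subseteq \Int(P)$, not $S \subseteq T(P)$, and you close this gap by noting (provably in \RCA) that every node of a perfect tree lies on a path, so $S \subseteq T(P)$ follows. This is exactly the kind of verification the paper is implicitly leaving to the reader.
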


In particular, the formula \lq\lq$P$ has perfectly many initial
intervals\rq\rq\ is provably \SI11 within \RCA. Moreover a straightforward
diagonal argument shows in \RCA\ that a nonempty perfect tree has uncountably
many paths. Therefore we have that \RCA\ proves that a partial order with
perfectly many initial intervals has uncountably many initial intervals.
Using the perfect tree theorem we obtain that \ATR\ proves that a partial
order with uncountably many initial intervals has actually perfectly many
initial intervals. This implies that the formula \lq\lq$P$ has uncountably
many initial intervals\rq\rq\ is provably \SI11 within \ATR.

In connection with this recall the following result due to Peter Clote
\cite{Clo89}:

\begin{theorem}[\ACA]\label{countable or perfect}
The following are equivalent:
\begin{enumerate}
 \item \ATR;
 \item any linear order has countably many or perfectly many initial
     intervals;
 \item any scattered linear order has countably many initial intervals.
\end{enumerate}
\end{theorem}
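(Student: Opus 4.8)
The plan is to prove Theorem \ref{countable or perfect} as a cycle of implications $(1)\Rightarrow(2)\Rightarrow(3)\Rightarrow(1)$, all carried out over \ACA.

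\emph{From \ATR\ to (2).} Given a linear order $L$, consider the tree $T(L)\subseteq 2^{<\N}$ of finite approximations of initial intervals. By Lemma \ref{T(P)}(i), $L$ has countably many initial intervals iff $T(L)$ has countably many paths. If $T(L)$ does not have countably many paths, then by the perfect tree theorem (Theorem \ref{ptt}, using \ATR) it contains a perfect subtree; by Lemma \ref{T(P)}(ii) this means $L$ has perfectly many initial intervals. So every linear order has countably many or perfectly many initial intervals, and the dichotomy is genuine since (as noted in the excerpt) \RCA\ already proves perfectly many implies uncountably many.

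\emph{From (2) to (3).} This is the heart of the matter and the step I expect to be the main obstacle. Suppose $L$ is a scattered linear order with perfectly many initial intervals, witnessed by a nonempty perfect tree $T\subseteq 2^{<\N}$ with $[T]\subseteq\Int(L)$. I would extract from $T$ an embedding of $\Q$ into $L$, contradicting scatteredness. The idea: by perfectness, inside $T$ one can find a system of nodes $\sigma_s$ indexed by finite binary strings $s$ (or by a dense set of rationals) such that $\sigma_s\sqsubseteq\sigma_{s'}$ when $s\sqsubseteq s'$, and at each branching there is a coordinate $x_s<\omega$ on which $\sigma_{s\conc 0}$ and $\sigma_{s\conc 1}$ disagree — forced to value $0$ along one branch and $1$ along the other. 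Because the $\sigma_s$ are initial-interval approximations for $L$, the branching coordinate $x_s$ is an element of $L$, and the ``included versus excluded'' pattern of $x_s$ across the paths of $T$ records the position of $x_s$ in $L$. Carefully choosing the branching coordinates so that the associated elements are pairwise distinct and linearly ordered in the pattern of a dense order, one obtains a map from a countable dense linear order into $L$ which is an embedding; since $\Q$ embeds into any countable dense linear order without endpoints (and one can arrange this, or pass to a subinterval), $L$ is not scattered. Formalizing the recursion that picks the $\sigma_s$ and verifies the order relations is arithmetical in $T$ and $L$, hence available in \ACA.

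\emph{From (3) to \ATR.} Here I would invoke the reversal essentially due to Clote \cite{Clo89}. Assuming (3), one shows \ATR\ via, say, the equivalent form that every tree with countably many paths admits a certain kind of ranking, or more directly by coding: given an instance of an \ATR\ problem (e.g.\ a sequence of trees, or the comparability of well-orders), build a scattered linear order whose initial intervals, if enumerable by a single sequence, yield the required set. The construction uses Kleene--Brouwer orders of trees — these are well-orders, hence scattered linear orders — and the number of initial intervals of such an order encodes the relevant $\Sigma^1_1$ information; countability of the collection of initial intervals then provides a hyperarithmetical bound that \RCA\ cannot supply, forcing \ATR. Since (3) is about scattered linear orders specifically and well-orders are scattered, the construction stays inside the scattered case. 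This direction I expect to be routine modulo citing and lightly adapting the argument of \cite{Clo89}; the genuine work is the combinatorial extraction of a dense suborder in $(2)\Rightarrow(3)$.
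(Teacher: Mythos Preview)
Your cycle $(1)\Rightarrow(2)\Rightarrow(3)\Rightarrow(1)$ is correct and matches how the paper handles these implications (the paper cites the theorem as Clote's but reproduces the arguments in Lemma \ref{Bonnet3w->RCA} and the proof of Theorem \ref{Bonnet3->ATR}). However, your assessment of where the difficulty lies is inverted.

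The step $(2)\Rightarrow(3)$ that you call ``the heart of the matter'' is in fact provable already in \RCA\ with no recursive construction at all: Lemma \ref{Bonnet3w->RCA} simply takes
\[
Q = \{x \in L \colon (\exists \sigma \in T)(|\sigma|=x \land \si0,\si1 \in T)\},
\]
the set of branching levels of the given perfect tree $T \subseteq T(L)$, and checks directly that $Q$ is a dense linear suborder of $L$. No careful recursive choice of nodes $\sigma_s$ is needed; the branching coordinates themselves already do the job, because membership in an initial interval of a linear order is exactly the order relation.

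Conversely, the reversal $(3)\Rightarrow(1)$ that you dismiss as ``routine modulo citing Clote'' is where the paper actually does the work (in the proof of Theorem \ref{Bonnet3->ATR}): one uses the characterization of \ATR\ via sequences $\{T_i\}$ of trees each with at most one path, forms $L = \sum_{i\in\N} T_i$ under the Kleene--Brouwer order, verifies that each such $T_i$ has $\KB$-order of shape $X + \sum_{n\in\omega^*} Y_n$ with $X$ and the $Y_n$ well-orders (hence $L$ is scattered by Lemma \ref{scattls}), and then reads off $\{i \colon [T_i]\neq\emptyset\}$ arithmetically from an enumeration of $\Int(L)$. Your sketch gestures at Kleene--Brouwer orders but does not identify the scatteredness verification, which is the nontrivial point.
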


Clote actually states the equivalence of \ATR\ only with (2), but his proofs
yield also the equivalence with (3).

\subsection{The system \ATRl X}

Recall that, by \cite[Theorem VIII.3.15]{sosoa}, \ATR\ is equivalent over
\ACA\ to the statement
\[ (\forall X)(\forall a \in \mathcal{O}^X) (H_a^X \text{ exists})\]
where $\mathcal{O}^X$ is the collection of (indices for) $X$-computable
ordinals and $H_a^X$ codes the iteration of the jump along $a$ starting from
$X$. This naturally leads to consider lightface versions of \ATR, as in
\cite{Tan89}, \cite{Tan90}, and \cite{Borelqo}. Here we make explicit mention
of the set parameter we use (rather then deal only with the parameterless
case and then invoke relativization) and let \ATRl X be \ACA\ plus the
formula $(\forall a \in \mathcal{O}^X) (H_a^{X} \text{ exists})$. In \ATRl X
one can prove arithmetical transfinite recursion along any $X$-computable
well-order.

By checking the proof of the forward direction of Theorem \ref{ptt} one
readily realizes that \ATRl X proves the perfect tree theorem for
$X$-computable trees:

\begin{theorem}[\ATRl X]\label{PTT lightface}
Every $X$-computable tree with uncountably many paths contains a perfect
subtree.
\end{theorem}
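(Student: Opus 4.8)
The plan is to relativize the proof of the forward direction of Theorem \ref{ptt}, namely the implication $\ATR \Rightarrow$ ``every tree with uncountably many paths contains a perfect subtree'' from \cite[\S V.5]{sosoa}, and to check that when the tree is $X$-computable every instance of set existence used in that proof is available in \ATRl X. It is cleanest to argue by contraposition: I will show that if $T \leq_T X$ is a tree with no perfect subtree, then $[T]$ is countable, i.e.\ there is a single sequence $\{f_n : n \in \N\}$ containing every path of $T$; the theorem then follows by pure logic.

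So let $T$ be $X$-computable with no perfect subtree. Following \cite{sosoa}, one runs the Cantor--Bendixson analysis of $T$: one defines derivatives $T^{(0)} = T \supseteq T^{(1)} \supseteq \dots$, where $T^{(\alpha+1)}$ is obtained from $T^{(\alpha)}$ by deleting the nodes lying on an isolated path of $[T^{(\alpha)}]$, taking intersections at limits. The relevant points are: (a) the derivative operation is \emph{arithmetical} in the previous stage (over \ACA, K\"onig's lemma identifies the extendible nodes of a binary tree); (b) the paths deleted in passing from $T^{(\alpha)}$ to $T^{(\alpha+1)}$ form a uniformly enumerable set of isolated points of $[T^{(\alpha)}]$; and (c) since $T$ has no perfect subtree, the analysis reaches the empty tree after a $T$-recursive number of steps. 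Concretely, as in \cite{sosoa} one builds a $T$-computable tree $U$ which has a path if and only if $T$ has a perfect subtree; since $T$ has none, $U$ is pathless, so by \cite[Lemma V.1.3]{sosoa} the restriction of $\leq_\KB$ to $U$ is a well-order $L$, which is $T$-recursive and long enough that the Cantor--Bendixson sequence $\langle T^{(\ell)} : \ell \in L\rangle$ ends with $T^{(\ell)} = \emptyset$. From this sequence one reads off an enumeration of $[T] = \bigcup_{\ell \in L}\bigl([T^{(\ell)}] \setminus [T^{(\ell+1)}]\bigr)$ by (b).

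To see that this is legitimate in \ATRl X, note that $T \leq_T X$, so the auxiliary tree $U$, and hence the well-order $L$, is $X$-computable. Therefore $L$ is an $X$-computable well-order, and \ATRl X proves arithmetical transfinite recursion along $L$ --- equivalently, the iterates of the jump relative to $T$ that the construction needs reduce to the sets $H_b^X$ with $b \in \mathcal{O}^X$, which exist by definition of \ATRl X. By (a), this is exactly what is required to produce the sequence $\langle T^{(\ell)} : \ell \in L\rangle$; everything else in the argument is arithmetical, hence already provable in \ACA.

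The main obstacle, precisely as in Simpson's proof, is point (c): the boundedness fact that the absence of a perfect subtree pins the Cantor--Bendixson rank of $T$ down to a $T$-recursive (equivalently $X$-recursive) ordinal, so that a single $X$-computable well-order $L$ suffices for the analysis to terminate. This is the step at which \cite{sosoa} appeals to $\Sigma^1_1$-boundedness, and the one thing to verify for the present lightface statement is that, for $X$-computable $T$, this argument never calls for arithmetical transfinite recursion along a well-order that is not $X$-recursive. This is routine, since the tree $U$ and all the jump hierarchies involved are computable from $X$.
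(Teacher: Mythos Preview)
Your proposal is correct and follows exactly the route the paper indicates: the paper's entire argument is the sentence ``By checking the proof of the forward direction of Theorem~\ref{ptt} one readily realizes that \ATRl X proves the perfect tree theorem for $X$-computable trees,'' and you have carried out precisely that checking, noting that the auxiliary tree $U$ and the Kleene--Brouwer well-order $L$ are $X$-computable so that the transfinite recursion needed is licensed by \ATRl X. Your sketch is in fact more detailed than what the paper provides.
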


The following is \cite[Lemma VIII.4.19]{sosoa}:

\begin{theorem}[\ATRl X]\label{model existence theorem}
There exists a countable coded $\omega$-model $M$ such that $X \in M$ and $M$
satisfies \DC.
\end{theorem}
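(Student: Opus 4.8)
Since this is precisely Lemma~VIII.4.19 of \cite{sosoa}, it suffices to cite it; for completeness, let me describe the plan one would follow to reprove it inside \ATRl X. The only tool available beyond \ACA\ is the existence of the iterated jumps $H_a^{X}$ for $a \in \mathcal{O}^{X}$. The plan hinges on the observation that \lq\lq the linear order $L$ carries a jump hierarchy over $X$\rq\rq\ is a \SI11 property of the pair $(X,L)$, whereas \lq\lq the $X$-computable linear order with index $e$ is a well-order\rq\rq\ is \PI11 and indeed \PI11$(X)$-complete.

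The first step is to produce a \emph{pseudo-well-order}. If every $X$-computable linear order carrying a jump hierarchy over $X$ were in fact a well-order, then, since the converse implication is exactly what \ATRl X provides, the two properties would coincide, and being an $X$-computable well-order would be \DE11$(X)$; this contradicts \SI11-boundedness, which is available in \ATRl X. Hence there is an $X$-computable ill-founded linear order $L$ carrying a jump hierarchy $\langle H_b \colon b \in L \rangle$ over $X$, and by a routine refinement $L$ may be taken so long that every $X$-computable well-order embeds into it, compatibly with the hierarchies.

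The second step is to read off the model. Fix a descending sequence $e_0 >_L e_1 >_L \cdots$ in $L$, chosen (as is standard in pseudohierarchy arguments) so that $B = \{ b \in L \colon \forall n\, b <_L e_n \}$ is an initial segment of $L$ with no largest element, and set
\[ M = \{ Y \colon Y \leq_T H_b \text{ for some } b \in B \}. \]
Then $X \in M$; moreover, $M$ is closed under join, Turing reducibility, and the Turing jump --- the last because $L$ has an immediate successor at each point along which the hierarchy performs a jump, and $B$ is closed under the $L$-successor. Thus $M$ codes a countable $\omega$-model of \ACA. It remains to check that $M$ satisfies \DC: since $M$ contains no descending sequence through the initial segments of $L$ that it knows about, it cannot distinguish the relevant suborders of $L$ from genuine well-orders, so for any \SI11 instance the required choice sequence can be manufactured inside $M$ by transfinite recursion along (a suborder of) $L$; here one invokes the characterization of the $\omega$-models of \DC\ given in \cite[\S VIII.4]{sosoa}.

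The main obstacle is precisely this last pair of steps: pinning down the pseudohierarchy so that $B$ has no largest element and $M$ is genuinely closed, and verifying the \DC\ scheme in $M$. Since this is done in full in \cite{sosoa}, in the paper we are content to cite it. We note in passing that $M$ will in general fail to be a model of \ATR, as indeed it must, since \ATR\ does not prove its own consistency.
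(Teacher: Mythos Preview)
Your proposal is correct and matches the paper exactly: the paper gives no proof at all but simply records the statement as \cite[Lemma~VIII.4.19]{sosoa}, which is precisely what you do. Your optional sketch of the pseudohierarchy argument is accurate and tracks Simpson's own proof, so it adds detail beyond what the paper provides rather than diverging from it.
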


We will use the following corollary:

\begin{corollary}[\ATR]\label{model existence}
For all $X$ and $Y$ there exists a countable coded $\omega$-model $M$ such
that $X,Y \in M$ and $M$ satisfies both \DC\ and \ATRl{X}.
\end{corollary}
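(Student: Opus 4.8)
The plan is to deduce the corollary from Theorem~\ref{model existence theorem} by folding the two parameters into one. Working in \ATR, let $Z = X \oplus Y$ be the join of $X$ and $Y$. By \cite[Theorem VIII.3.15]{sosoa}, \ATR\ proves (over \ACA) the scheme $(\forall W)(\forall a \in \mathcal{O}^W)(H_a^W \text{ exists})$, so in particular \ATR\ proves \ATRl Z. Hence Theorem~\ref{model existence theorem}, applied with the set parameter $Z$, produces a countable coded $\omega$-model $M$ with $Z \in M$ and $M \models \DC$, and I claim this $M$ is as required.

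First I would observe that $X, Y \in M$: \DC\ proves \RCA, any coded $\omega$-model of \RCA\ that contains a set is closed under relative computability, and $X$ and $Y$ are computable from $Z \in M$. It then remains to verify $M \models \ATRl X$, i.e.\ $M \models \ACA$ and $M \models (\forall a \in \mathcal{O}^X)(H_a^X \text{ exists})$. The former is clear since \DC\ proves \ACA. For the latter, \DC\ proves \ATR\ (via \AC), so $M \models \ATR$; applying \cite[Theorem VIII.3.15]{sosoa} inside $M$, the model $M$ believes $(\forall W)(\forall a \in \mathcal{O}^W)(H_a^W \text{ exists})$, and since $X \in M$ it in particular believes the instance $W = X$. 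Thus $M \models \ATRl X$, which together with $M \models \DC$ completes the argument.

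I do not expect a genuine obstacle here: the statement is essentially a relativized repackaging of Theorem~\ref{model existence theorem}. The only points requiring a little care are the bookkeeping of coding $X$ and $Y$ as the single parameter $Z$ (and recovering them inside the $\omega$-model), and the observation that \DC\ is strong enough that, inside $M$, one can recover from \cite[Theorem VIII.3.15]{sosoa} the parameter-free form of \ATR\ --- which is exactly what upgrades ``$M \models \DC$'' to ``$M \models \ATRl X$'' for the particular set $X$.
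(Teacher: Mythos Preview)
Your argument has a genuine gap at the step ``\DC\ proves \ATR\ (via \AC)'': this implication is false. The direction is the reverse --- \ATR\ proves \AC, not the other way around --- and \DC\ (although it implies \AC) does not prove \ATR. One quick way to see that no such implication can hold: Theorem~\ref{model existence theorem} already shows that \ATR\ proves the existence of a countable coded $\omega$-model of \DC; if every such model also satisfied \ATR, then \ATR\ would prove the existence of an $\omega$-model of itself and hence its own consistency, contradicting G\"odel's second incompleteness theorem. So from $M \models \DC$ alone you cannot conclude $M \models \ATRl X$, and the $M$ you produce need not satisfy the corollary's conclusion.

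The paper's proof repairs exactly this point. Using \AC, the main axiom of \ATRl X is equivalent to a single \SI11 sentence $(\exists Z)\,\varphi(Z,X)$ with $\varphi$ arithmetical. Working in \ATR, one first fixes a witness $Z$ for this sentence and only then applies Theorem~\ref{model existence theorem} to the parameter $X \oplus Y \oplus Z$. The resulting $M$ contains $X$, $Y$, and $Z$; since $M \models \DC$ (hence $M \models \AC$) and the witness $Z$ lies in $M$, the equivalence yields $M \models \ATRl X$. The missing ingredient in your approach is precisely this extra set $Z$: one must place into $M$ not only $X$ and $Y$ but also a set certifying that the jump hierarchies $H_a^X$ all exist.
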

\begin{proof}
We argue in \ATR\ and let $X$ and $Y$ be given. By \AC, which is a
consequence of \ATR, the main axiom of \ATRl X is equivalent to a \SI11\
formula $(\exists Z) \varphi (Z,X)$ with $\varphi$ arithmetic. This formula
is true in \ATR, and hence we can fix $Z$ such that $\varphi (Z,X)$. By
Theorem \ref{model existence theorem} there exists a countable coded
$\omega$-model $M$ of \DC\ such that $X \oplus Y \oplus Z \in M$. In
particular, $X,Y \in M$ and, as $Z \in M$ and $M$ is a model of \DC\ (hence
also of \AC), $M$ satisfies \ATRl{X}.
\end{proof}

\section{Initial interval separation and essential unions}\label{section:iis&eu}

In this section we prove two technical results that are useful in the
remainder of the paper.

\subsection{Initial interval separation}
Our first result is a new equivalence with \WKL, inspired by the usual \SI01
separation (\cite[Lemma IV.4.4]{sosoa}) but producing separating sets which
are also initial intervals.

\begin{lemma}\label{iis}
Over \RCA, the following are equivalent:
\begin{enumerate}
 \item \WKL;
 \item \SI01 initial interval separation. Let $P$ be a partial order and
     $\varphi(x)$, $\psi(x)$ be \SI01 formulas with one distinguished free
   number variable.

   If $(\forall x,y \in P) (\varphi(x) \land \psi(y) \implies y \npreceq
   x)$, then there exists an initial interval $I$ of $P$ such that
\[  (\forall x \in P)((\varphi(x) \implies x \in I) \land (\psi(x) \implies x
\notin I)). \]
 \item Initial interval separation. Let $P$ be a partial order and suppose
     $A,B \subseteq P$ are such that $(\forall x \in A) (\forall y \in B) y
   \npreceq x$. Then there exists an initial interval $I$ of $P$ such that
   $A \subseteq I$ and $B \cap I = \emptyset$.
\end{enumerate}
\end{lemma}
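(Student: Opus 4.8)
The plan is to establish the three implications $(1)\Rightarrow(2)\Rightarrow(3)\Rightarrow(1)$. The middle implication is the easy one: given $A,B\subseteq P$ with $(\forall x\in A)(\forall y\in B)\,y\npreceq x$, the formulas $\varphi(x)\equiv x\in A$ and $\psi(x)\equiv x\in B$ are (even $\Sigma^0_0$ relative to the parameters $A,B$, hence) $\Sigma^0_1$, and the hypothesis of $(2)$ is exactly what we assumed, so $(2)$ hands us the desired initial interval $I$. So the real content is $(1)\Rightarrow(2)$ and $(3)\Rightarrow(1)$.

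For $(1)\Rightarrow(2)$, I would argue in \WKL. Write $\varphi(x)\equiv(\exists s)\theta(x,s)$ and $\psi(x)\equiv(\exists s)\eta(x,s)$ with $\theta,\eta$ being $\Sigma^0_0$. The idea is to mimic the usual proof that \WKL\ gives $\Sigma^0_1$ separation, but inside the tree $T(P)$ of finite approximations of initial intervals, so that the path we extract is automatically (the characteristic function of) an initial interval. Concretely, I would build a tree $T\subseteq T(P)$ of binary strings $\sigma$ that are consistent with the requirements: $\sigma\in T$ iff $\sigma\in T(P)$ and, for every $x<|\sigma|$, if some witness $s<|\sigma|$ makes $\theta(x,s)$ hold then $\sigma(x)=1$, and if some $s<|\sigma|$ makes $\eta(x,s)$ hold then $\sigma(x)=0$. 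The separation hypothesis guarantees these two demands never collide \emph{and} are compatible with being an initial interval: if $\varphi(x)$ and $\psi(y)$ both have witnesses below the current length then $y\npreceq x$, so forcing $\sigma(x)=1$ and $\sigma(y)=0$ does not violate the initial-interval closure condition in $T(P)$. The main point to check is that $T$ is infinite: the set $I^*=\{x\in P:\varphi(x)\}$ is an initial interval (by the hypothesis, since if $\varphi(y)$ and $z\preceq y$ then... actually one must be slightly careful here — the hypothesis as stated only controls $\varphi$ against $\psi$, not the downward closure of $\{x:\varphi(x)\}$), so instead I would note that whatever we are doing, for each $n$ there is \emph{some} binary string of length $n$ in $T$, obtained by setting $\sigma(x)=1$ exactly when $x\in P$ and $x\preceq y$ for some $y<n$ with a witness for $\varphi(y)$ below $n$; this is an initial-interval approximation, it puts all currently-witnessed $\varphi$-points in, and by the separation hypothesis it keeps all currently-witnessed $\psi$-points out. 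Hence $T$ is an infinite binary tree, \WKL\ gives a path, and the path is the characteristic function of an initial interval $I$ with $\varphi(x)\Rightarrow x\in I$ and $\psi(x)\Rightarrow x\notin I$.

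For $(3)\Rightarrow(1)$ I would derive \WKL\ from the parameter-free separation statement by coding, in the style of \cite[Lemma IV.4.4]{sosoa}. Given disjoint sets $C,D\subseteq\N$ enumerated by $\Sigma^0_1$ formulas (equivalently, given an infinite binary tree with no obvious path, one reduces to separating the two "$\Sigma^0_1$-inseparable" sets $\{e:\{e\}(e){\downarrow}=0\}$ and $\{e:\{e\}(e){\downarrow}=1\}$ relativized appropriately), I would build a partial order $P$ in which the $C$-points sit strictly below a common top region and the $D$-points sit as minimal elements not below anything relevant — more precisely, engineer $P$ so that any initial interval $I$ with $C\subseteq I$ and $D\cap I=\emptyset$ computes a separating set for the original pair. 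A clean way: let $P$ consist of two "threads", put each element of $C$ below a marker and each element of $D$ incomparable to that marker, arranged so that $y\npreceq x$ whenever $x$ is a $C$-element and $y$ a $D$-element; then an initial interval separating the $C$-side from the $D$-side yields, by $\Delta^0_1$ comprehension from $I$, the needed separating set, and the existence of such a set over \RCA\ is equivalent to \WKL. I expect the routine-but-fiddly part here to be choosing the partial order so that the definability bookkeeping works out and so that "initial interval" is not an extra constraint that breaks the reduction; the antisymmetry/transitivity checks are straightforward.

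The step I expect to be the main obstacle is verifying infinitude of the tree $T$ in $(1)\Rightarrow(2)$: one must exhibit, uniformly in $n$, a length-$n$ node of $T$, and this requires simultaneously (a) respecting the initial-interval closure conditions defining $T(P)$, (b) including every $x<n$ that already has a $\varphi$-witness below $n$, and (c) excluding every $x<n$ that already has a $\psi$-witness below $n$ — and seeing that (a)–(c) are jointly satisfiable is exactly where the separation hypothesis $(\forall x,y\in P)(\varphi(x)\land\psi(y)\Rightarrow y\npreceq x)$ is used, and where a naive argument might wrongly assume $\{x:\varphi(x)\}$ is itself downward closed.
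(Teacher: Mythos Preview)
Your treatment of $(1)\Rightarrow(2)$ and $(2)\Rightarrow(3)$ is correct and essentially identical to the paper's: build $T\subseteq T(P)$ by restricting to strings that honour all $\varphi$- and $\psi$-witnesses seen so far, and exhibit a length-$n$ node by taking the downward closure (within $\{0,\dots,n-1\}$) of the points with a $\varphi$-witness below $n$. You also correctly flag that $\{x:\varphi(x)\}$ need not itself be downward closed and that this is precisely why one must take the closure when building the witness node.

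The gap is in $(3)\Rightarrow(1)$. Statement $(3)$ is about \emph{sets} $A,B\subseteq P$, not $\Sigma^0_1$-definable classes; so you cannot feed it ``the $C$-side'' and ``the $D$-side'' when $C,D$ are merely r.e. Your sketch asks for an initial interval $I$ with $C\subseteq I$ and $D\cap I=\emptyset$, but that is an instance of $(2)$, not of $(3)$, and proving $(2)\Rightarrow(1)$ is not what is needed. This is not just definability bookkeeping: the whole point of the reversal is to show that even the \emph{weaker} set-version $(3)$ already yields \WKL, and for that you must produce genuine sets $A,B$ to hand to $(3)$.

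The paper's device is a three-layer poset. Given one-to-one $f,g$ with disjoint ranges, set $P=\{a_n,b_n,c_n:n\in\N\}$ with $c_n\preceq a_m\iff f(m)=n$ and $b_m\preceq c_n\iff g(m)=n$, and no other comparabilities. Now $A=\{a_n:n\in\N\}$ and $B=\{b_n:n\in\N\}$ are honest computable sets, the hypothesis $(\forall x\in A)(\forall y\in B)\,y\npreceq x$ holds trivially, and the r.e.\ information about $\operatorname{ran}(f)$ and $\operatorname{ran}(g)$ lives entirely in the (computable) order relation, not in $A$ or $B$. An initial interval $I$ with $A\subseteq I$ and $B\cap I=\emptyset$ then forces $c_{f(m)}\in I$ (below some $a_m$) and $c_{g(m)}\notin I$ (above some $b_m$), so $\{n:c_n\in I\}$ separates the ranges. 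The conceptual step you are missing is exactly this: push the $\Sigma^0_1$ content into $\preceq$ and keep $A,B$ as index sets, using a middle layer to read off the separator.
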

\begin{proof}
We first assume \WKL\ and prove (2). Fix the partial order $P$ and let
$\varphi(x) \equiv (\exists m) \varphi_0 (x,m)$ and $\psi(n) \equiv (\exists
m) \psi_0(x,m)$ be \SI01\ formulas with $\varphi_0$ and $\psi_0$ \SI00.
Assume $(\forall x,y \in P) (\varphi(x) \land \psi(y) \implies y \npreceq
x)$.

Form the binary tree $T \subseteq 2^{<\N}$ by letting $\sigma \in T$ if and
only if $\sigma \in T(P)$ and for all $x,y< |\sigma|$:
\begin{enumerate}[\quad(i)]
 \item $(\exists m<|\sigma|)\, \varphi_0(x,m) \implies \sigma(x)=1$, and
 \item $(\exists m<|\sigma|)\, \psi_0(x,m)\implies \sigma(x)=0$.
\end{enumerate}

To see that $T$ is infinite, we show that for every $k \in \N$ there exists
$\sigma \in T$ with $|\sigma|=k$. Given $k$ let
\[ \sigma(x)=1 \iff x \in P \land (\exists y,m<k) (\varphi_0(y,m) \land x
\preceq y)\]
for all $x<k$. It is easy to verify that $\sigma\in T$. By weak K\"{o}nig's
lemma, $T$ has a path $f$. By \SI00\ comprehension, let $I=\{x \colon
f(x)=1\}$. It is straightforward to see that $I$ is as desired.

(3) is the special case of (2) obtained by considering the \SI00, and hence
\SI01, formulas $x \in A$ and $x \in B$.

It remains to prove (3) $\implies$ (1). It suffices to derive in \RCA\ from
(3) the existence of a set separating the disjoint ranges of two one-to-one
functions (\cite[Lemma IV.4.4]{sosoa}). Let $f,g \colon \N \to \N$ be
one-to-one functions such that $(\forall n,m \in \N) f(n) \neq g(m)$. Define
a partial order on $P = \{a_n, b_n, c_n\colon n \in \N\}$ by letting $c_n
\preceq a_m$ if and only if $f(m)=n$, $b_m \preceq c_n$ if and only if
$g(m)=n$, and adding no other comparabilities. Let $A = \{a_n\colon n \in
\N\}$ and $B = \{b_n\colon n \in \N\}$, so that $(\forall x \in A) (\forall y
\in B) y \npreceq x$. By (3) there exists an initial interval $I$ of $P$ such
that $A \subseteq I$ and $B \cap I = \emptyset$. It is easy to check that
$\{n\colon c_n \in I\}$ separates the range of $f$ from the range of $g$.
\end{proof}

\subsection{Essential unions of sets}
Our second result deals with finite unions of sets and will be applied to
finite unions of ideals.

\begin{definition}[\RCA]
Let $I \subseteq \N$. A family of sets $\{A_i \colon i \in I\}$ is
\emph{essential} if
\[ (\forall i \in I) (A_i \nsubseteq \bigcup_{j\in I, j \neq i} A_j).\]
The union of such a family is called an \emph{essential union}.
\end{definition}

Not every family of sets can be made essential without loosing elements from
the union. The simplest example is a sequence $\{A_n \colon n \in \N\}$ of
sets such that $A_n \subset A_{n+1}$ for every $n$. However the following
shows that, provably in \RCA, every finite family of sets can be made
essential.

\begin{lemma}[\RCA]\label{essential}
For every family of sets $\{A_i \colon i \in F\}$ with $F$ finite there
exists $I \subseteq F$ such that $\{A_i \colon i\in I\}$ is essential and
\[\bigcup _{i \in F} A_i = \bigcup_{i \in I} A_i.\]
\end{lemma}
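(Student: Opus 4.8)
The plan is to prove this by finite induction on the cardinality of $F$, or equivalently by a simple finite iteration that discards ``redundant'' indices one at a time. First I would note the trivial base case: if $F = \emptyset$ (or, more conveniently, if no index is redundant) then $I = F$ already works. For the inductive/iterative step, suppose $\{A_i \colon i \in F\}$ is not already essential, so there is some $i_0 \in F$ with $A_{i_0} \subseteq \bigcup_{j \in F, j \neq i_0} A_j$. Then set $F' = F \setminus \{i_0\}$ and observe that $\bigcup_{i \in F'} A_i = \bigcup_{i \in F} A_i$, since removing $A_{i_0}$ loses nothing. Now apply the inductive hypothesis to $\{A_i \colon i \in F'\}$, which has strictly smaller cardinality, to obtain $I \subseteq F' \subseteq F$ with $\{A_i \colon i \in I\}$ essential and $\bigcup_{i \in I} A_i = \bigcup_{i \in F'} A_i = \bigcup_{i \in F} A_i$, as required.

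The one point that needs care in the \RCA\ setting is making sure the argument is genuinely formalizable, i.e.\ that the relevant induction is available and that the sets in question exist. Since $F$ is finite, the recursion has a fixed finite length (at most $|F|$ steps), so it can be unwound into an ordinary $\Sigma^0_1$ (indeed bounded) induction on $|F|$, which is available in \RCA. At each stage the index set $F'$ is a finite subset of a finite set and hence exists, and the union $\bigcup_{i \in F'} A_i$ exists by $\Delta^0_1$ comprehension relative to the (single set coding the) sequence $\{A_i \colon i \in F\}$ together with $F'$: membership of $x$ in this union is the bounded formula $(\exists i \in F')\, x \in A_i$. So all the objects produced along the way are legitimate in \RCA.

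The main ``obstacle'' is really just bookkeeping rather than mathematics: one should phrase the statement being proved by induction carefully so that the inductive hypothesis applies to the subfamily indexed by $F'$. A clean way is to prove by induction on $n$ the statement ``for every finite $F$ with $|F| = n$ and every sequence $\{A_i \colon i \in F\}$, there is $I \subseteq F$ with $\{A_i \colon i \in I\}$ essential and $\bigcup_{i \in F} A_i = \bigcup_{i \in I} A_i$'', and then instantiate with $n = |F|$. Alternatively, and perhaps more transparently, one can define $I$ directly and non-recursively: let $I = \{ i \in F \colon A_i \nsubseteq \bigcup_{j \in F,\, j \neq i} A_j \}$. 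This $I$ exists by bounded comprehension. It is immediate that $\bigcup_{i \in I} A_i \subseteq \bigcup_{i \in F} A_i$; for the reverse inclusion one shows that every $i \in F \setminus I$ satisfies $A_i \subseteq \bigcup_{j \in I} A_j$, which follows because such an $A_i$ is covered by the other $A_j$'s and one can ``chase'' each covering element down into $I$ using that $F$ is finite (formally, another bounded induction on how many indices lie outside $I$). That $\{A_i \colon i \in I\}$ is essential then needs the observation that removing the indices outside $I$ does not enlarge any of the unions $\bigcup_{j \neq i} A_j$ enough to swallow $A_i$; this again uses that the discarded $A_i$ were already covered within $I$. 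Either route is short, and I would pick whichever reads most cleanly; the iterative one is the most obviously correct and I would present that.
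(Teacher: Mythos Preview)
Your main (iterative) argument is mathematically sound and is in the same spirit as the paper's proof, but the paper organizes the \RCA\ formalization more cleanly. Instead of removing redundant indices one by one, the paper directly lets $n_0$ be the least $n$ for which some $I \subseteq F$ of size $n$ has $\bigcup_{i \in I} A_i = \bigcup_{i \in F} A_i$, arguing that $n_0$ exists because otherwise \SI01-induction yields $(\forall n)(\forall I \subseteq F)(|I| \leq n \to \bigcup_{i \in F} A_i \neq \bigcup_{i \in I} A_i)$, which is false at $n = |F|$. Any $I$ of size $n_0$ is then automatically essential by minimality. The advantage is that the formula handled by induction is genuinely \SI01 (inequality of the two unions is \SI01), whereas your inductive hypothesis combines ``$\{A_i : i \in I\}$ is essential'' (a \SI01 condition) with ``same union'' (a \PI01 condition). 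That conjunction is neither \SI01 nor \PI01, so your parenthetical ``(indeed bounded)'' is wrong, and even the weaker claim that \SI01-induction applies directly needs justification you have not supplied; recall that in \RCA\ the number $|F|$ may be nonstandard, so ``unwinding'' the recursion externally is not an option. The paper's minimum-size trick decouples the two conditions and makes the \RCA\ argument go through with only \SI01-induction.

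Your alternative direct definition $I = \{ i \in F : A_i \nsubseteq \bigcup_{j \in F, j \neq i} A_j \}$ is actually incorrect, not merely awkward to formalize: take $F = \{0,1\}$ with $A_0 = A_1 = \{0\}$. Each $A_i$ is contained in the other, so $I = \emptyset$, yet $\bigcup_{i \in F} A_i = \{0\} \neq \emptyset$. The ``chasing'' you propose cycles between $0$ and $1$ and never lands in $I$. This route should be dropped.
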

\begin{proof}
Let
\[
n_0 = \min \{n \colon (\exists I \subseteq F) (|I|=n \land \bigcup _{i
\in F} A_i = \bigcup_{i \in I} A_i)\}.
\]
\RCA\ proves that $n_0$ exists, otherwise by \SI01-induction one could prove
\[
(\forall n) (\forall I \subseteq F) (|I| \leq n \rightarrow \bigcup _{i \in
F} A_i \neq \bigcup_{i \in I} A_i),
\]
which is clearly false.

If $I \subseteq F$ is such that $|I|=n_0$ and $\bigcup _{i \in F} A_i =
\bigcup_{i \in I} A_i$ then it is immediate that $\{A_i \colon i\in I\}$ is
essential.
\end{proof}

\section{The left to right directions}\label{section:->}

In this section we study Theorem \ref{ET1} and the left to right directions
of Theorems \ref{Bonnet1}, \ref{ET2}, and \ref{Bonnet3}. It turns out that
the left to right direction of Theorem \ref{Bonnet3} is equivalent to \ATR\
and the other statements are equivalent to \ACA.

\subsection{Equivalences with \ACA}\label{section:ACA}

We consider the following equivalence, which includes Theorems \ref{ET1} and
\ref{ET2}.

\begin{theorem}\label{strong antichain}
Let $P$ be a partial order. Then the following are equivalent:
\begin{enumerate}
 \item $P$ is a finite union of ideals;
 \item there is a finite bound on the size of the strong antichains in $P$;
 \item $P$ has no infinite strong antichains.
\end{enumerate}
\end{theorem}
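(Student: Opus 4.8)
The plan is to prove the cycle of implications $(1) \implies (2) \implies (3) \implies (1)$, and to identify which steps need more than \RCA. The implication $(1) \implies (2)$ is straightforward and provable in \RCA: if $P = A_1 \cup \dots \cup A_n$ with each $A_i$ an ideal, then any strong antichain $S$ can contain at most one element from each $A_i$, since two elements of an ideal are compatible \emph{in} $P$ (being compatible in the ideal). Hence $|S| \le n$, giving the bound. The implication $(2) \implies (3)$ is trivial (also in \RCA): an infinite strong antichain would in particular yield arbitrarily large finite strong antichains, contradicting the bound; here one uses that a subset of a strong antichain is again a strong antichain.

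The real content is $(3) \implies (1)$, and this is where \ACA\ is needed (matching the theorem referenced in the excerpt as \ref{equivACA}). First I would use arithmetical comprehension to form, for each $x \in P$, the cone $\up x = \{y \in P : x \preceq y\}$ and more generally the sets needed to detect compatibility. The classical argument (Bonnet / Erd\H{o}s--Tarski) proceeds by an inductive construction: one tries to cover $P$ by ideals. The key combinatorial fact is that if $P$ is not a finite union of ideals, then one can effectively (relative to the relevant jumps) build an infinite strong antichain. Concretely, one argues that the collection of ideals of $P$ that are of the form $\Down(F)$ for $F$ a finite subset of $P$ — equivalently, principal-ish ideals — can be used: if no finite family of ideals covers $P$, then at each stage one finds an element incompatible with all previously chosen elements, and this search is arithmetical once the compatibility relation and downward closures are available. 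Alternatively, and perhaps more cleanly, one deduces $(3) \implies (1)$ from Theorem \ref{ET1} as the excerpt suggests: Theorem \ref{ET1} (provable, presumably, in \ACA\ via its own argument about strong antichains) gives that no infinite strong antichains implies a finite bound $n$ on finite strong antichains; then a maximal strong antichain $\{x_1, \dots, x_k\}$ with $k \le n$ exists, and one shows $P = \bigcup_{i} \up{x_i}$-style decomposition fails directly, so instead one takes the ideals generated appropriately. The cleanest route: take a maximal strong antichain $S = \{x_1, \dots, x_k\}$ (which exists by the bound, using $\Sigma^0_1$ or $\Sigma^0_2$ induction available in \ACA), and for each $i$ let $A_i$ be the set of $y \in P$ compatible with $x_i$; then every $y \in P$ lies in some $A_i$ by maximality, each $A_i$ need not be an ideal, so one iterates this construction \emph{inside} each $A_i$ — but this recursion must terminate, which follows from the bound $n$ decreasing. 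Making this recursion precise and showing it halts in finitely many steps, all within \ACA, is the main obstacle.

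The step I expect to be genuinely delicate is the termination and uniformity of the ideal-decomposition recursion: one must show that the process of splitting non-ideal pieces into compatibility-classes around maximal strong antichains strictly decreases a finite parameter (the strong-antichain bound of the piece), so that after boundedly many stages every piece is an ideal, and that the whole construction can be carried out by a single application of arithmetical comprehension rather than an unbounded iteration. This requires care with the induction scheme: \ACA\ gives $\Sigma^0_1$ (hence $\Sigma^0_n$ for all $n$) induction, which should suffice since the recursion depth is bounded by the fixed number $n$ from Theorem \ref{ET1}. I would also need to verify that "being an ideal" is arithmetically decidable relative to the constructed data, which it is, since the witnessing $z$ in the compatibility condition can be searched for using the jump. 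The forward direction $(3) \implies (1)$ thus reduces, modulo Theorem \ref{ET1}, to a bounded recursion that \ACA\ handles, while the lower bound (that $(3) \implies (1)$ actually \emph{requires} \ACA) would be addressed separately in the paper by coding the range of an injection into a suitable partial order.
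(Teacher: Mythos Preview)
Your handling of $(1) \implies (2)$ and $(2) \implies (3)$ matches the paper, and for $(3) \implies (2)$ you correctly defer to Theorem~\ref{ET1}, which the paper proves separately (Lemma~\ref{lemma 4}) in \ACA.

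The gap is in your argument for $(2) \implies (1)$. You take a maximal strong antichain $S$, form the compatibility classes $A_i = \{y \in P : y \text{ compatible with } x_i\}$, and then assert that each $A_i$ ``need not be an ideal'', proposing to recurse inside each $A_i$ with the bound decreasing. But this termination claim is unsupported and in fact problematic: incompatibility \emph{in} $A_i$ is a weaker condition than incompatibility in $P$ (a common upper bound in $P$ might lie outside $A_i$), so a strong antichain for the suborder $A_i$ need not be one for $P$, and there is no reason the strong-antichain bound should drop when you pass to $A_i$. Your recursion has no engine.

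The paper (Lemma~\ref{lemma 2}) avoids this entirely by taking $S$ of \emph{maximum} size $\ell$, not merely maximal under inclusion, and showing that each $A_z$ is already an ideal. Given $x,y \in A_z$, pick witnesses $x_0 \succeq x,z$ and $y_0 \succeq y,z$; if $x_0,y_0$ were incompatible in $A_z$ they would be incompatible in $P$ as well (because $\up{x_0} \subseteq \up{z} \subseteq A_z$), and then $(S \setminus \{z\}) \cup \{x_0,y_0\}$ would be a strong antichain of size $\ell+1$, contradicting the choice of $\ell$. So no iteration is needed and your ``genuinely delicate'' step simply disappears. You had the right decomposition $P = \bigcup_{z \in S} A_z$; what you missed is that choosing $S$ of maximum cardinality forces the pieces to be ideals outright.
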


We notice that (1) $\implies$ (2) and (2) $\implies$ (3) are easily provable
in \RCA. We show that (2) $\implies$ (1) and (3) $\implies$ (2) are provable
in \ACA.

We start with implication (2) $\implies$ (1).

\begin{lemma}[\ACA]\label{lemma 2}
Let $P$ be a partial order with no arbitrarily large finite strong
antichains. Then $P$ is a finite union of ideals.
\end{lemma}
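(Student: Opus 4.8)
The plan is to show that if $P$ has a finite bound $N$ on the size of its strong antichains, then $P$ can be written as a union of $N$ ideals. First I would invoke \ACA\ to fix a maximal strong antichain $S = \{s_1, \dots, s_k\}$ in $P$ (one exists since strong antichains have bounded size, and \ACA\ lets us build a maximal one by searching for elements incompatible with all previously chosen members; alternatively, take a strong antichain of maximal size $k \le N$). For each $i \le k$ let $A_i = \{x \in P \colon x \text{ is compatible with } s_i \text{ in } P\}$; since $S$ is maximal, every $x \in P$ is compatible with some $s_i$, so $P = \bigcup_{i \le k} A_i$. The work is to show each $A_i$ can be further decomposed into finitely many ideals, and then to bound the total number of ideals by $N$.

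The key step is to analyze each $A_i$. Note $A_i$ need not be an initial interval, but I would instead consider, for each $x \in A_i$, some witness $z$ with $s_i, x \preceq z$; the point is that within $A_i$ the element $s_i$ is compatible with everything. The natural move is to recurse: if $A_i$ itself had a large strong antichain we would be done by induction on the bound, but a strong antichain in $A_i$ need not be strong in $P$ (compatibility in $P$ is weaker). The right formulation is probably to induct on $N$ and argue that $\up{s_i} = \{z \colon s_i \preceq z\}$, which exists by \ACA, has strong antichains of size strictly less than $N$ in a suitable sense — because $s_i$ together with a strong antichain of $\up{s_i}$ of size $m$ yields... no, that doesn't immediately work either. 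I expect the actual argument goes: the set $\up{s_i}$ with the induced order has no strong antichain (in $\up{s_i}$) of size $\geq N$, and moreover by a counting/pigeonhole argument across the $A_i$ the numbers add up; then apply the inductive hypothesis to each $\up{s_i}$, pull the resulting ideals back to ideals of $P$ via downward closure within $A_i$, and check compatibility is preserved.

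The main obstacle I anticipate is precisely this mismatch between "compatible in $P$" and "compatible in a subset": to run an induction one needs the relevant substructures ($A_i$ or $\up{s_i}$) to have a strictly smaller antichain bound, and making that precise requires care — probably one should track not a single bound but show that a strong antichain in $\up{s_i}$ of size $m$ extends (using $s_i$ and compatibility) to structure in $P$ forcing $m < N$, or use that $P$ being a finite union of $k$ sets each of which is a union of $\le N/k$-ish ideals. A clean route that avoids some of this: observe that the Erd\H{o}s–Tarski-style argument (Theorem \ref{ET2}, whose deduction from Theorem \ref{ET1} the authors mention uses this lemma) suggests building the ideals directly. So I would: (1) fix a maximal strong antichain $S$ via \ACA; (2) for each $s \in S$ form the ideal generated appropriately — conjecturally $I_s = \{x \in P \colon \up{x} \cap \up{s} \neq \emptyset \text{ along a coherent chain}\}$ — and (3) verify via \ACA\ (needed for the existence of the cones $\up{x}$ and for the arithmetical search) that these finitely many sets are ideals covering $P$. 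The delicate verification is that each $I_s$ is genuinely directed (an ideal), which is where the boundedness of strong antichains must be used, likely through a König-style or pigeonhole argument on finite configurations, all of which is available in \ACA.
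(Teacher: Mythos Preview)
Your setup matches the paper exactly: take a strong antichain $S$ of maximal size $\ell$ and set $A_z = \{x \in P : x \text{ is compatible with } z\}$ for each $z \in S$, so that $P = \bigcup_{z \in S} A_z$ by maximality. But then you go astray. You write that ``$A_i$ need not be an initial interval'' and that ``the work is to show each $A_i$ can be further decomposed into finitely many ideals,'' and you start searching for an induction on the bound. None of that is needed, because each $A_z$ is \emph{already} an ideal.

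First, $A_z$ is downward closed: if $x \preceq y$ and $y, z \preceq w$, then $x \preceq w$ too. The missing idea is the directedness argument, which is a one-step swap, not an induction. Given $x, y \in A_z$, pick witnesses $x_0, y_0$ with $x \preceq x_0$, $y \preceq y_0$, and $z \preceq x_0, y_0$. If $x_0$ and $y_0$ were incompatible in $P$, then since both lie above $z$ they are incompatible with every $w \in S \setminus \{z\}$ (as $z$ and $w$ are incompatible), so $(S \setminus \{z\}) \cup \{x_0, y_0\}$ would be a strong antichain of size $\ell + 1$, contradicting the choice of $\ell$. Hence $x_0$ and $y_0$ have a common upper bound $u$ in $P$; since $u \succeq z$ we have $u \in A_z$, and $u$ is above both $x$ and $y$. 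Thus $A_z$ is an ideal and the proof is finished. Your detours through $\up{s_i}$, inductive decomposition, and pigeonhole counting are all attempts to manufacture a strictly smaller instance; the actual argument instead uses maximality of $|S|$ directly to force compatibility inside each $A_z$.
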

\begin{proof}
Let $\ell \in \N$ be the maximum size of a strong antichain in $P$ and let
$S$ be a strong antichain of size $\ell$. For every $z \in S$ define by
arithmetical comprehension
\[A_z = \{x \in P \colon \text{$x$ and $z$ are compatible}\}.\]

Since $S$ is maximal with respect to inclusion it is immediate that $P =
\bigcup_{z \in S} A_z$ and it suffices to show that each $A_z$ is an ideal.

Fix $z \in S$ and $x,y \in A_z$. Let $x_0,y_0$ be such that $x \preceq x_0$,
$y \preceq y_0$, and $z \preceq x_0,y_0$. It suffices to show that $x_0$ and
$y_0$ are compatible in $A_z$. If this is not the case, $x_0$ and $y_0$ are
incompatible also in $P$ (because $\up{x_0} \subseteq \up z \subseteq A_z$).
Moreover for each $w \in S \setminus \{z\}$ each of $x_0$ and $y_0$ is
incompatible with $w$ in $P$ because $z$ and $w$ are incompatible in $P$.
Thus $(S \setminus \{z\}) \cup \{x_0,y_0\}$ is a strong antichain of size
$\ell+1$, a contradiction.
\end{proof}

To obtain (3) $\implies$ (2) of Theorem \ref{strong antichain} we are going
to use the existence of maximal (with respect to inclusion) strong
antichains. We first show that this statement is equivalent to \ACA.

\begin{lemma}\label{strong antichain existence}
Over \RCA, the following are equivalent:
\begin{enumerate}
 \item \ACA;
 \item every strong antichain in a partial order extends to a maximal strong
antichain;
 \item every partial order contains a maximal strong antichain.
\end{enumerate}
\end{lemma}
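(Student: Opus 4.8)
The plan is to establish the cycle of implications $(1)\implies(2)\implies(3)\implies(1)$, using \ACA\ for the first implication and arithmetical comprehension (in a careful, effectivized form) for the reversal. The implication $(2)\implies(3)$ is trivial. For $(1)\implies(2)$, I would argue directly: given a strong antichain $S$ in $P$, define by arithmetical comprehension the set of elements of $P$ that are incompatible in $P$ with every member of $S$; then build a maximal strong antichain extending $S$ by recursively adding, at each stage, the least element of $P$ not yet excluded and incompatible with all elements added so far. Since the predicate \lq\lq $x$ is compatible in $P$ with $y$\rq\rq\ is $\SI01$, the relevant sets exist in \ACA, and the construction is an arithmetical recursion, so the resulting maximal strong antichain exists.

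For the reversal $(3)\implies(1)$ (and hence $(2)\implies(1)$, since $(3)$ is the weaker-looking statement), I would reduce to proving the range of an arbitrary injection $f\colon\N\to\N$ exists, which suffices for \ACA\ over \RCA. The idea is to code the graph of $f$ into a partial order in such a way that the maximal strong antichains reveal $\mathrm{ran}(f)$. A natural construction: put points $a_n$ for each $n$ (representing the test \lq\lq is $n\in\mathrm{ran}(f)$?\rq\rq) together with, for each $n$, an $\omega$-chain of points $b_{n,0}\prec b_{n,1}\prec\cdots$ whose elements are only related to $a_n$ once a witness appears, i.e.\ $b_{n,k}\preceq a_n$ iff $(\exists m\le k)\,f(m)=n$. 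One then checks that any maximal strong antichain must contain $a_n$ precisely when $n\notin\mathrm{ran}(f)$ (when $n\in\mathrm{ran}(f)$, $a_n$ sits above the whole chain $\{b_{n,k}\}$, forcing it to be compatible with cofinally many chain elements, so it cannot be in a maximal strong antichain that already handles that chain), and from this the set $\mathrm{ran}(f)$ can be recovered by $\SI00$ comprehension relative to the maximal strong antichain.

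I expect the main obstacle to be getting the combinatorics of the coding partial order exactly right so that \emph{every} maximal strong antichain — not just some cleverly chosen one — determines $\mathrm{ran}(f)$, while simultaneously keeping the order relation $\SI00$ (or at worst arithmetic uniformly in $f$) so the whole thing is definable in \RCA. In particular one must ensure that the chains $\{b_{n,k}\}$ cannot be \lq\lq absorbed\rq\rq\ in unintended ways and that maximality genuinely forces the dichotomy on the $a_n$; I would likely need to add a few more auxiliary points (e.g.\ comparabilities making the $b_{n,k}$ pairwise compatible, or an extra point per $n$) to pin down the behaviour. It will also take a little care to verify that $(2)$ and $(3)$ are genuinely interderivable with \ACA\ \emph{separately}, i.e.\ that the single coding construction handles both, which it does since the maximal strong antichain it produces is in fact finite (of size essentially one per $n\notin\mathrm{ran}(f)$, suitably bounded) — but if finiteness is awkward to guarantee, proving $(3)\implies\ACA$ and $(1)\implies(3)$ and $(1)\implies(2)$ and $(2)\implies(3)$ closes the loop among all three anyway.
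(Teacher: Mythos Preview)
Your plan for $(1)\Rightarrow(2)\Rightarrow(3)$ is correct and matches the paper. For $(3)\Rightarrow(1)$ you have the right overall strategy---code $\mathrm{ran}(f)$ into a computable partial order so that any maximal strong antichain reveals it---but the specific construction fails. First, the relation is not transitive: if $f(m_0)=n$ then $b_{n,0}\prec b_{n,m_0}\preceq a_n$ forces $b_{n,0}\preceq a_n$, contrary to your clause; the transitive closure makes $b_{n,k}\preceq a_n$ equivalent to $n\in\mathrm{ran}(f)$, so the order relation is genuinely \SI01\ and not a set in \RCA. Second, and more seriously, your dichotomy is false even granting the order: when $n\in\mathrm{ran}(f)$ and $a_n$ sits above the whole chain, nothing stops $a_n$ itself from being the element of $S$ that covers that component---a top element always ``handles'' everything below it. So $a_n\in S$ is perfectly consistent with $n\in\mathrm{ran}(f)$, and membership of $a_n$ in $S$ carries no information. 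Your suggested patches (extra compatibilities among the $b_{n,k}$, or one extra point per $n$) do not address this, since a single test point whose position in $S$ is supposed to decide $n\in\mathrm{ran}(f)$ can always be absorbed either way.

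The paper's coding supplies the missing idea: use \emph{two} test points per $n$. Take $P=\{a_n,b_n,c_n:n\in\N\}$ with $a_n\preceq c_m$ and $b_n\preceq c_m$ exactly when $f(m)=n$, and no other comparabilities. This is \SI00\ and already transitive. Now $a_n$ and $b_n$ are compatible in $P$ iff some $c_m$ is a common upper bound iff $n\in\mathrm{ran}(f)$. When $n\notin\mathrm{ran}(f)$, each of $a_n,b_n$ is incompatible with every other element of $P$, so any maximal strong antichain $S$ must contain both; when $n\in\mathrm{ran}(f)$, $S$ contains at most one of them. Hence $n\in\mathrm{ran}(f)\iff a_n\notin S\lor b_n\notin S$, and $\mathrm{ran}(f)$ exists by \SI00\ comprehension. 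The point is that the \emph{mutual compatibility} of a fixed pair---not the membership of a single point---is what a maximal strong antichain can be forced to detect.
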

\begin{proof}
We show (1) $\implies$ (2). Let $P$ be a partial order and $S \subseteq P$ be
a strong antichain. By recursion we deﬁne $f\colon \N \to \{0,1\}$ by letting
$f(x) = 1$ if and only if $S \cup \{y<x \colon f(y)=1\} \cup \{x\}$ is a
strong antichain in $P$. Then $T = \{x \colon f(x) = 1\}$ is a maximal strong
antichain with $S \subseteq T$.

Implication (2) $\implies$ (3) is trivial. To show (3) $\implies$ (1), we
argue in \RCA\ and derive from (3) the existence of the range of any
one-to-one function. Given $f \colon \N \to \N$ one-to-one consider $P=\{a_n,
b_n, c_n \colon n \in \N\}$. For all $n,m \in \N$ let $a_n \preceq c_m$ if
and only if $b_n \preceq c_m$ if and only if $f(m)=n$, and add no other
comparabilities. By (3), let $S \subseteq P$ be a maximal strong antichain.
Then, $n$ belongs to the range of $f$ if and only if $a_n \notin S \lor b_n
\notin S$. Thus the range of $f$ exists by \SI00 comprehension.
\end{proof}

The following is implication (3) $\implies$ (2) of Theorem \ref{strong
antichain}, i.e.\ our formalization of the left to right direction of Theorem
\ref{ET1}.

\begin{lemma}[\ACA]\label{lemma 4}
Let $P$ be a partial order with no infinite strong antichains. Then there are
no arbitrarily large finite strong antichains in $P$.
\end{lemma}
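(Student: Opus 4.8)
The plan is to prove the contrapositive: assuming $P$ has arbitrarily large finite strong antichains, I construct an infinite strong antichain in $P$. Since \ACA\ proves arithmetical comprehension (and hence arithmetically definable recursions), it suffices to describe such a recursion. It produces a decreasing sequence of subsets $P = D_0 \supseteq D_1 \supseteq \cdots$ together with elements $x_n$, maintaining the invariants that each $D_n$ still contains arbitrarily large finite strong antichains — call such a set \emph{large} — that $x_n \in D_n$, and that $D_{n+1} \subseteq \{y \in D_n \colon y$ is incompatible with $x_n$ in $P\}$. Given these invariants, for $i<j$ we have $x_j \in D_j \subseteq D_{i+1} \subseteq \{y \colon y$ incompatible with $x_i\}$, so $\{x_n \colon n \in \N\}$ is the desired infinite strong antichain.

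The construction thus reduces to a single successor step: given a large $D \subseteq P$, find $x \in D$ such that $D' := \{y \in D \colon y$ incompatible with $x$ in $P\}$ is again large (one then sets $x_n := x$ and $D_{n+1} := D'$). Here I would use Lemma \ref{strong antichain existence}: inside \ACA, $D$ has a strong antichain $S$ maximal (with respect to inclusion) among strong antichains contained in $D$. If $S$ is infinite we are already done, so assume $S$ is finite. By maximality every element of $D$ is compatible in $P$ with some element of $S$, hence $D = \bigcup_{z \in S}(D \cap A_z)$ where $A_z = \{y \colon y$ compatible with $z\}$; since any strong antichain $F \subseteq D$ splits as $\bigcup_{z \in S}(F \cap A_z)$ and $S$ is finite, a pigeonhole argument locates $z^* \in S$ for which $D \cap A_{z^*}$ is large, and one likewise analyses the incompatibility pieces $\{y \in D \colon y$ incompatible with $z\}$ for $z \in S$.

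The delicate point, which I expect to be the main obstacle, is reconciling the polarity: pigeonhole hands us a large \emph{compatibility} piece $D \cap A_{z^*}$, on which $z^*$ is compatible with everything, whereas the recursion needs a large \emph{incompatibility} piece. Taking $x := z^*$ would make $D'$ the one piece we have no control over, and naively iterating into the compatibility pieces only manufactures a nested family of pairwise compatible elements rather than an antichain. This is exactly the content of the Erd\H{o}s--Tarski argument: one must combine the decompositions supplied by several maximal strong antichains, peel off a strong antichain while passing to a subset on which the relevant incompatibility sets remain large, and separately dispose of the degenerate configurations (an element compatible with a whole large subset; a maximal strong antichain that fails to be a maximum one), so that the successor step either yields a usable $x$ or else already exhibits an infinite strong antichain. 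The routine verifications — that "large'' is arithmetically expressible for the particular sets built, that the recursion is arithmetically definable, and the bookkeeping for finite unions of "small'' sets (cf.\ Lemma \ref{essential}) — I would isolate as short preliminary lemmas.
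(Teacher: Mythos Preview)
Your framework is sound and you correctly locate the crux: after pigeonholing over a finite maximal strong antichain $S \subseteq D$, what you obtain is some $z^* \in S$ whose \emph{compatibility} piece $D \cap A_{z^*}$ is large, whereas your recursion needs an $x \in D$ whose \emph{incompatibility} piece $\{y \in D : y \text{ incompatible with } x\}$ is large. But you never resolve this polarity mismatch. Saying ``this is exactly the content of the Erd\H{o}s--Tarski argument'' is circular---that argument \emph{is} Lemma~\ref{lemma 4}---and what follows (``combine the decompositions supplied by several maximal strong antichains, peel off a strong antichain while passing to a subset on which the relevant incompatibility sets remain large'') is a wish list, not a proof. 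In fact I do not see how to establish your successor step while staying inside $D$: elements of the large piece $D \cap A_{z^*}$ need not lie in the incompatibility set of any fixed $x \in D$, and nothing in your outline explains how to manufacture such an $x$. The reference to Lemma~\ref{essential} is also off; that lemma is about pruning finite unions to essential ones and does not help here.

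The paper sidesteps the polarity problem by \emph{embracing} the compatibility direction. Instead of recursing into incompatibility sets, it recurses into \emph{upper cones}: at each stage one works inside a final interval $U$ (initially $P$, thereafter $\up{x_n}$) that is large, extends a two-element strong antichain to a finite maximal strong antichain $S \subseteq U$ via Lemma~\ref{strong antichain existence}, and uses pigeonhole---after lifting each element of a big strong antichain in $U$ to a common upper bound with some member of $S$---to find $x_{n+1} \in S$ with $\up{x_{n+1}}$ again large. One then records any other $y_{n+1} \in S$. The infinite strong antichain is $\{y_n : n \in \N\}$, not $\{x_n\}$: for $n<m$ one has $x_n \preceq y_m$ (since $y_m \in \up{x_{m-1}} \subseteq \up{x_n}$), and $x_n, y_n$ are incompatible, so a common upper bound of $y_n$ and $y_m$ would also bound $x_n$ and $y_n$. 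The antichain is thus harvested as side witnesses along an increasing chain $x_0 \preceq x_1 \preceq \cdots$, and pigeonhole is used in precisely the direction in which it naturally succeeds. This is the missing idea in your proposal.
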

\begin{proof}
Suppose for a contradiction that $P$ has arbitrarily large finite strong
antichains but no infinite strong antichains (the existence of such a pair is
proved below). We define by recursion a sequence of elements $(x_n, y_n) \in
P^2$.

Let $(x_0, y_0)$ be a pair such that $x_0$ and $y_0$ are incompatible in $P$
and $\up{x_0}$ contains arbitrarily large finite strong antichains. Suppose
we have defined $x_n$ and $y_n$. Using arithmetical comprehension, search for
a pair $(x_{n+1}, y_{n+1})$ such that $x_n \preceq x_{n+1}, y_{n+1}$,
$x_{n+1}$ and $y_{n+1}$ are incompatible in $P$, and $\up{x_{n+1}}$ contains
arbitrarily large finite strong antichains.

To show that the recursion never stops assume that $U \subseteq P$ is a final
interval with arbitrarily large finite strong antichains ($U=P$ at stage $0$,
$U=\up{x_n}$ at stage $n+1$). By Lemma \ref{strong antichain existence} there
exists a maximal strong antichain $S \subseteq U$ with at least two elements.
By hypothesis, $S$ is finite and we apply the following claim:

\begin{claim}
There exists $x \in S$ such that $\up x$ contains arbitrarily large finite
strong antichains.
\end{claim}
\begin{proof}[Proof of claim]
Let $n=|S|$. We first show that for every $k \geq 1$ there exists $u \in S$
such that $\up u$ contains a strong antichain of size $k$.

Given $k \geq 1$, let $T$ be a strong antichain of size $n \cdot k$. Since
$S$ is maximal, every element $y \in T$ is compatible with some element of
$S$. For any $y \in T$ let $(u(y), v(y))$ be the least pair such that $u(y)
\in S$ and $u(y), y \preceq v(y)$. Then $\{v(y) \colon y \in T\}$ is again a
strong antichain of size $n \cdot k$. As $y \mapsto u(y)$ defines a function
from $T$ to $S$, it easily follows that for some $u \in S$ the upper cone
$\up u$ contains at least $k$ elements of the form $v(y)$ with $y \in T$.

Now, for all $k \geq 1$, let $u_k \in S$ be such that $\up{u_k}$ contains a
strong antichain of size $k$. Since $S$ is finite, by the infinite pigeonhole
principle (which is provable in \ACA), there exists $x \in S$ such that
$x=u_k$ for infinitely many $k$. The upper cone $\up x$ thus contains
arbitrarily large finite strong antichains.
\end{proof}

In particular, $x_n \preceq y_m$ for all $n<m$ and $x_n$ and $y_n$ are
incompatible in $P$. It follows that $y_n$ is incompatible with $y_m$ for all
$n<m$. Then $\{y_n \colon n \in \N\}$ is an infinite strong antichain, for
the desired contradiction.
\end{proof}

The following Theorem shows that our use of \ACA\ in several of the preceding
Lemmas is necessary and establish the reverse mathematics results about
Theorem \ref{ET1} and the left to right directions of Theorems \ref{Bonnet1}
and \ref{ET2} (these are respectively conditions (3), (5), and (4) in the
statement of the Theorem). We also show that apparently weaker statements,
such as the restriction of Theorems \ref{Bonnet1} and \ref{ET2} to
well-partial orders, require \ACA.

\begin{theorem}\label{equivACA}
Over \RCA, the following are pairwise equivalent:
\begin{enumerate}
 \item \ACA;
 \item every partial order with no arbitrarily large finite strong
     antichains is a finite union of ideals;
 \item every partial order with no infinite strong antichains does not
     contain arbitrarily large finite strong antichains;
 \item every partial order with no infinite strong antichains is a finite
     union of ideals;
\item if a partial order has no infinite antichains then every initial
    interval is a finite union of ideals;
 \item every well-partial order is a finite union of ideals.
\end{enumerate}
\end{theorem}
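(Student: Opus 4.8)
The plan is to prove the cycle of implications $(1)\Rightarrow(2)\Rightarrow(3)\Rightarrow(4)\Rightarrow(5)\Rightarrow(6)\Rightarrow(1)$, using the lemmas already established. The implications among the ``upper'' statements are the easy part: $(1)\Rightarrow(2)$ is Lemma \ref{lemma 2}, $(1)\Rightarrow(3)$ is Lemma \ref{lemma 4}, and combining them gives $(1)\Rightarrow(4)$; working in \RCA\ it is routine that $(2)$ and $(3)$ together yield $(4)$, and that $(4)$ implies $(2)$ and $(3)$ individually (a finite union of ideals has a bound on its strong antichains, by an easy pigeonhole over finitely many ideals, and conversely Lemma \ref{lemma 2} is exactly $(2)\Rightarrow(1)$; Lemma \ref{lemma 4} is exactly $(3)\Rightarrow(2)$, but reproving these in \ACA\ is fine since we only need \RCA-implications among $(2),(3),(4)$ once we have $(1)\Rightarrow(4)$). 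So I would organize the \RCA-provable part as: $(4)\Rightarrow(2)$ trivial, $(2)\Rightarrow(3)$ trivial (a bound on strong antichains forbids arbitrarily large finite ones — in fact these are literally negations of each other), and $(3)\Rightarrow(4)$ needs a little more but is still elementary. The cleanest presentation is probably just to note $(1)\Rightarrow(2)$, $(1)\Rightarrow(3)$, $(1)\Rightarrow(4)$ from the lemmas, then $(4)\Rightarrow(2)$, $(4)\Rightarrow(3)$ trivially, leaving the real work in connecting $(4)$, $(5)$, $(6)$ and closing the loop back to $(1)$.

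For $(4)\Rightarrow(5)$, I would argue in \RCA: suppose $P$ has no infinite antichains and let $I$ be an initial interval of $P$; then $I$, as a partial order in its own right, has no infinite antichains, hence no infinite strong antichains (an infinite strong antichain in $I$ would be an infinite antichain in $I$), so by $(4)$ applied to $I$ we get that $I$ is a finite union of ideals of $I$ — but an ideal of $I$, where $I$ is an initial interval of $P$, is readily checked to be an ideal of $P$. For $(5)\Rightarrow(6)$: a well-partial order has no infinite antichains, and is itself an initial interval of itself, so $(5)$ immediately gives that it is a finite union of ideals. For $(5)\Rightarrow(4)$ one would need a little care since $(5)$ speaks only of initial intervals of orders with no infinite antichains, whereas $(4)$ allows infinite antichains provided there is no infinite \emph{strong} antichain; but we do not need this implication, since the cycle goes through $(6)$.

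The main obstacle — and the only place \ACA\ is genuinely needed for a reversal — is $(6)\Rightarrow(1)$: I must build, in \RCA, from a one-to-one $f\colon\N\to\N$ a well-partial order whose decomposition into finitely many ideals codes the range of $f$. The natural attempt is something like $P=\{a_n,b_n:n\in\N\}$ with $b_n\preceq a_m$ iff $f(m)=n$ and nothing else, so that $P$ is well-founded of finite height and has no infinite antichain (hence is a well-partial order in \RCA): each $a_n$ is either minimal or sits directly above the unique $b_{f^{-1}(n)}$... wait, I need the antichain condition to actually hold in \RCA, which it does since any antichain meets $\{a_n\}$ and $\{b_n\}$ each in an antichain and among the $a$'s (resp.\ $b$'s) everything is incomparable — so finite antichains can be arbitrarily large, which is \emph{bad}. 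So the construction must instead ensure \emph{no} infinite antichain while still coding membership; the right move is to make all the ``active'' $b_n$'s (those with $n\in\operatorname{ran} f$) lie below a common structure, or to use a single chain into which the coding elements are woven, so that finiteness of the ideal decomposition of $P$ forces a uniform bound that is informative about $\operatorname{ran} f$. I would follow the template of the reversals in Lemma \ref{strong antichain existence} and Lemma \ref{iis}: design $P$ so that a strong antichain of unbounded size would have to exist if $\operatorname{ran} f$ failed to be $\Sigma^0_1$-definable in a controlled way, then invoke $(6)$ to get a finite ideal decomposition, extract from it a bound $\ell$ on strong antichains, and read off $\operatorname{ran} f$ by a bounded ($\Sigma^0_0$ with parameter $\ell$ and the finite list of ideals) formula. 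Getting this coding exactly right — in particular verifying in \RCA\ that the constructed order really is a well-partial order while still being rich enough that the bound $\ell$ recovers the range — is the delicate step, and is presumably where the paper spends its effort.
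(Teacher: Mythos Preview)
Your plan has a genuine structural gap. You assert that ``the only place \ACA\ is genuinely needed for a reversal is $(6)\Rightarrow(1)$'', but this is false: your chain of \RCA-implications does not establish $(2)\Rightarrow(1)$ or $(3)\Rightarrow(1)$. In \RCA\ one has $(4)\Rightarrow(2)$ and $(4)\Rightarrow(3)$ (as you note), and $(2)\wedge(3)\Rightarrow(4)$, but neither $(2)\Rightarrow(4)$ nor $(3)\Rightarrow(4)$ holds trivially. For $(2)\Rightarrow(4)$: the hypothesis of $(4)$ (no infinite strong antichain) is \emph{weaker} than that of $(2)$ (no arbitrarily large finite strong antichains), so you cannot feed a partial order satisfying the former into $(2)$. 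For $(2)\Rightarrow(6)$: a well-partial order has no infinite strong antichains, but showing it has no arbitrarily large finite ones is exactly the content of $(3)$, which you do not have. Your parenthetical ``these are literally negations of each other'' conflates the two hypotheses; they are not negations, and the implication from ``no infinite'' to ``bounded finite'' is precisely Erd\H{o}s--Tarski, equivalent to \ACA. The paper accordingly proves \emph{three} separate reversals: $(2)\Rightarrow(1)$, $(3)\Rightarrow(1)$, and $(6)\Rightarrow(1)$, each with its own coding construction.

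For the constructions themselves, your sketch for $(6)\Rightarrow(1)$ is on the right track in spirit but the paper's argument is more specific than ``extract a bound $\ell$ on strong antichains''. It uses the true/false-stage dichotomy for $f$: one builds $P=\{a_n,b_n\}$ where $P_0=\{a_n\}$ is a linear order of type $\omega+\omega^*$ (false stages form the $\omega$ part, true stages the $\omega^*$ part), $P_1=\{b_n\}$ has type $\omega$, and $a_n\preceq b_m$ exactly when $n$ is seen to be false by stage $m$. If $P$ is not a wpo, a bad sequence gives an enumeration of true stages and hence the range of $f$; if $P$ is a wpo, applying $(6)$ and making the decomposition essential (Lemma~\ref{essential}) singles out one ideal containing all $b_m$, and membership of $a_n$ in that ideal decides whether $n$ is false. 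The reversals for $(2)$ and $(3)$ use different (and simpler, resp.\ more delicate) partial orders; in particular $(3)\Rightarrow(1)$ also uses true/false stages but must artificially inflate the order to get arbitrarily large finite strong antichains without \SI02-induction.
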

\begin{proof}
(1) $\implies$ (2) is Lemma \ref{lemma 2} and (1) $\implies$ (3) is Lemma
\ref{lemma 4}. The combination of Lemma \ref{lemma 4} and Lemma \ref{lemma 2}
shows (1) $\implies$ (4). Since a strong antichain in a subset of a partial
order is an antichain, (4) $\implies$ (5) holds. For (5) $\implies$ (6),
recall that, provably in \RCA, a well-partial order has no infinite
antichains.

It remains to show that each of (2), (3) and (6) implies \ACA. Reasoning in
\RCA\ fix a one-to-one function $f \colon \N \to \N$. In each case we build a
suitable partial order $P$ which encodes the range of $f$.\smallskip

We start with (2) $\implies$ (1). Let $P = \{a_n, b_n \colon n \in \N\} \cup
\{c\}$. We define a partial order on $P$ by letting:
\begin{enumerate}[\quad(i)]
\item $a_n \preceq c$ for all $n$;
\item $b_n \preceq b_m$ for $n \leq m$;
\item $a_n \preceq b_m$ if and only if $(\exists i<m) f(i)=n$;
\end{enumerate}
and adding no other comparabilities. It is easy to verify that every strong
antichain in $P$ has at most $2$ elements. By (2) $P$ is a finite union of
ideals $A_0, \dots, A_k$. By Lemma \ref{essential}, we may assume that this
union is essential. Let us assume $b_0 \in A_0$.

By \SI01-induction (actually \SI00) we prove that $(\forall m)(b_m\in A_0)$.
The base step is obviously true. Suppose $b_m\in A_0$ and $b_{m+1}\notin
A_0$. Then $A_0=\{x\in P\colon x\preceq b_m\}$ (because every element $\succ
b_m$ is $\succeq b_{m+1}$). Suppose $b_{m+1}\in A_1$. Then $A_0 \subseteq
A_1$ and the decomposition is not essential, a contradiction. Therefore,
$A_0$ contains all the $b_m$'s. Now, it is straightforward to see that
$(\exists m) f(m)=n$ if and only if $a_n \in A_0$, so that the range of $f$
can be defined by \DE00 comprehension.\smallskip

To prove (3) $\implies$ (1) we exploit the notion of false and true stage.
Recall that $n \in \N$ is said to be a \emph{false stage for} $f$ (or simply
\emph{false}) if $f(k)<f(n)$ for some $k>n$ and \emph{true} otherwise. We may
assume to have infinitely many false stages, since otherwise the range of $f$
exists by \DE01 comprehension. On the other hand, there are always infinitely
many true stages (i.e.\ for every $m$ there exists $n>m$ which is true),
because otherwise we can build an infinite descending sequence of natural
numbers.

Let $P=\{a_n, b_n \colon n \in \N\}$ and define
\begin{enumerate}[\quad(i)]
 \item $b_n \preceq b_m$ for all $n<m$;
 \item $a_n \preceq b_m$ if and only if $f(k)<f(n)$ for some $k$ with $n<k
     \leq m$ (i.e.\ if at stage $m$ we know that $n$ is false);
\end{enumerate}
and there are no other comparabilities.

Notice that the $b_n$'s and the $a_n$'s with $n$ false are pairwise
compatible in $P$. Therefore every infinite strong antichain in $P$ consists
of infinitely many $a_n$'s with $n$ true and at most one $b_n$ or $a_n$ with
$n$ false. Possibly removing that single element we have an infinite set of
true stages. From this in \RCA\ we can obtain a strictly increasing
enumeration of true stages $i \mapsto n_i$. Since $(\exists n) f(n)=m$ if and
only if $(\exists n \leq n_m) f(n)=m$, the range of $f$ exists by \DE01
comprehension. Thus the existence of an infinite strong antichain in $P$
implies the existence of the range of $f$ in \RCA.

To apply (3) and conclude the proof we need to show that $P$ contains
arbitrarily large finite strong antichains. To do this apparently we need
\SI02-induction (which is not available in \RCA) to show that for all $k$
there exists $k$ distinct true stages.

To remedy this problem (with the same trick used for this purpose in
\cite[Lemma 4.2]{MarSho11}) we replace each $a_n$ with $n+1$ distinct
elements. Thus we set $P'=\{a_n^i, b_n \colon n \in \N, i \leq n\}$ and
substitute (ii) with $a_n^i \leq_{P'} b_m$ if and only if $f(k)<f(n)$ for
some $k$ with $n<k \leq m$. Then also the existence of an infinite strong
antichain in $P'$ suffices to define the range of $f$ in \RCA. However the
existence of arbitrarily large finite strong antichains in $P'$ of the form
$\{a_n^i \colon i \leq n\}$ follows immediately from the existence of
infinitely many true stages.\smallskip

We now show (6) $\implies$ (1). We again use false and true stages and as
before we assume to have infinitely many false stages. The idea for $P$ is to
combine a linear order $P_0 = \{a_n \colon n \in\ N\}$ of order type $\omega
+ \omega^*$ with a linear order $P_1 = \{b_n \colon n \in \N\}$ of order type
$\omega$. The false and true stages give rise respectively to the $\omega$
and $\omega^*$ part of $P_0$, and every false stage is below some element of
$P_1$. We proceed as follows.

Let $P = \{a_n, b_n \colon n\in\N\}$. For $n \leq m$, set
\begin{enumerate}[\quad(i)]
 \item $a_n\preceq a_m$ if $f(k)<f(n)$ for some $n<k\leq m$ (i.e.\ if at
     stage $m$ we know that $n$ is false);
 \item $a_m\preceq a_n$ if $f(k)>f(n)$ for all $n<k\leq m$ (i.e.\ if at
     stage $m$ we believe $n$ to be true).
\end{enumerate}
When condition (i) holds, we also put $a_n \preceq b_m$. Then we linearly
order the $b_m$'s by putting $b_i \preceq b_j$ if and only if $i \leq j$.
There are no other comparabilities.

It is not difficult to verify that $P$ is a partial order with no infinite
antichains. Note that if $n$ is false and $m>n$ is such that $f(m)<f(n)$,
then $\{i \colon a_i \preceq a_n\} \subseteq \{i \colon i<m\}$ is finite,
while if $n$ is true, then $\{i \colon a_n \preceq a_i\} \subseteq \{i \colon
i \leq n\}$ is finite. This explains our assertion that $P_0$ has order type
$\omega + \omega^*$.

First assume that $P$ is not a well-partial order. By definition, there
exists $g \colon \N \to P$ such that $i<j$ implies $g(i) \npreceq g(j)$. As
for every false $n$ there are only finitely many $x \in P$ such that $a_n
\npreceq x$, we must have $g(i) \neq a_n$ for all $i$ and for all false $n$.
We may assume that $g(i) \neq b_n$ for all $i,n$, since there are finitely
many $b_m$ such that $b_n \npreceq b_m$. We thus have $g(i) = a_{n_i}$ with
$n_i$ true for all $i$. Since $a_m \succ a_n$ and $n<m$ imply $n$ false, the
map $i \mapsto n_i$ is a strictly increasing enumeration of true stages. As
before, the range of $f$ exists by \DE01 comprehension.

We now assume that $P$ is a well-partial order. Apply (6), so that
$P=\bigcup\{A_i \colon i<k\}$ is a finite union of ideals. By Lemma
\ref{essential} we may assume that the union is essential so that there
exists an ideal, say $A_0$, that contains all the $b_m$'s.

We claim that $n$ is false if and only if $a_n\in A_0$. To see this, let $n$
be false. Thus $a_n\preceq b_m$ for some $m$, and hence $a_n\in A_0$.
Conversely, if $a_n\in A_0$ then it is compatible with, for instance, $b_0$,
and yet again it is $\preceq b_m$ for some $m$. Hence, the set of true stages
is $\{n\colon a_n\notin A_0\}$, and the conclusion follows as before.
\end{proof}

\subsection{Equivalences with \ATR}\label{section:uncountably}

We now consider the left to right direction of Theorem \ref{Bonnet3}, i.e.\
the statement every countable scattered partial order with no infinite
antichains has countably many initial intervals. We start with a technical
Lemma:

\begin{lemma}[\ACA]\label{pm to um}
If a partial order $P$ has perfectly many initial intervals, then there
exists $x\in P$ such that either
\begin{enumerate}[\quad (i)]
 \item $\inc x$ has uncountably many initial intervals, or
 \item both $\downs x$ and $\ups x$ have uncountably many initial
     intervals.
\end{enumerate}
\end{lemma}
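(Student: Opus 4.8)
The plan is to start from a nonempty perfect tree $S \subseteq 2^{<\N}$ witnessing that $P$ has perfectly many initial intervals (via Lemma \ref{T(P)}, $S$ can be taken to be a perfect subtree of $T(P)$), and to find a single element $x \in P$ on which the paths through $S$ genuinely branch. Specifically, I would look for $x \in P$ and two incomparable $\sigma_0, \sigma_1 \in S$ with $|\sigma_0|, |\sigma_1| > x$, $\sigma_0(x) = 0$, $\sigma_1(x) = 1$, both extendible in $S$; such an $x$ exists because distinct initial intervals must disagree somewhere and $S$ is perfect (if no such $x$ existed, all paths through $S$ would have the same characteristic function off the positions where $S$ is already determined, contradicting that a perfect tree has uncountably many paths). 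Fix this $x$.

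Now split the tree above $\sigma_1$ (the branch committing to $x \in I$): let $S_1 = \{\tau \in S : \sigma_1 \sqsubseteq \tau \text{ or } \tau \sqsubseteq \sigma_1\}$, which is still a nonempty perfect tree all of whose paths are characteristic functions of initial intervals containing $x$. For $I \in \Int(P)$ with $x \in I$, the restriction $I \cap \ups{x}$ is an initial interval of $\ups{x}$, and $I \cap \inc{x}$ is an initial interval of $\inc{x}$ — here I use that $I \supseteq \down{x}$ automatically, so $I$ is determined by the data $(I \cap \ups{x}, I \cap \inc{x})$ together with $\down{x}$. Thus the map $I \mapsto (I \cap \ups{x}, I \cap \inc{x})$ is an injection from $\{I \in \Int(P) : x \in I\}$ into $\Int(\ups{x}) \times \Int(\inc{x})$. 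Applying this to the uncountably many paths of $S_1$: either $\inc{x}$ has uncountably many initial intervals, giving (i); or $\inc{x}$ has countably many, in which case (by the pigeonhole available in \ACA, since the second coordinate ranges over a countable set) uncountably many of the $I$'s share the same trace on $\inc{x}$, and hence $\ups{x}$ must itself have uncountably many initial intervals. A symmetric analysis above $\sigma_0$, using the injection $I \mapsto (I \cap \downs{x}, I \cap \inc{x})$ for $I$ with $x \notin I$ (so $\up{x} \cap I = \emptyset$, and $I$ is determined by $I \cap \downs{x}$ and $I \cap \inc{x}$), yields: either $\inc{x}$ has uncountably many initial intervals, or $\downs{x}$ does.

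Combining the two halves: if $\inc{x}$ has uncountably many initial intervals we are in case (i). Otherwise both the analysis above $\sigma_0$ and above $\sigma_1$ force, respectively, $\downs{x}$ and $\ups{x}$ to have uncountably many initial intervals, which is case (ii). This is the same $x$ in both, so we are done.

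The main obstacle I expect is the bookkeeping around "uncountably many" versus "perfectly many": in \ACA\ we do not have the perfect tree theorem, so "uncountably many initial intervals" for a substructure must be handled directly as "not coded by any single sequence," and the pigeonhole step ("uncountably many $I$'s with the same trace on $\inc{x}$") needs care — it is really the statement that if $T(\inc{x})$ has countably many paths and a certain family of paths of $S_1$ is uncountable, then the fibers of the trace map cannot all be countable, which is a straightforward instance of arithmetical comprehension plus the fact (provable in \ACA) that a countable union of countable sets, here presented uniformly, is countable. The other mildly delicate point is verifying that the trace maps are genuinely injective on the relevant pieces, i.e.\ that an initial interval $I$ with $x \in I$ is fully recovered from $(I \cap \ups{x}, I \cap \inc{x})$ — this uses only that $\down{x} \subseteq I$, which follows from $I$ being an initial interval containing $x$, and that $P = \down{x} \cup \ups{x} \cup \inc{x}$ is a partition of $P$ into the three cones.
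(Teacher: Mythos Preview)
Your proposal is correct and takes essentially the same approach as the paper's proof: locate a branching point $x$ of the perfect subtree of $T(P)$ (so that both $\{I \in \Int(P) : x \in I\}$ and $\{I \in \Int(P) : x \notin I\}$ are uncountable), then exploit the partition $P = \down x \cup \ups x \cup \inc x$ to argue that countability of $\Int(\inc x)$ forces uncountability of both $\Int(\downs x)$ and $\Int(\ups x)$. The only cosmetic difference is that the paper bypasses your pigeonhole phrasing by directly exhibiting the covering lists $\{I_n \cup \Down J_m\}$ and $\{\Down(\{x\} \cup I_n \cup J_m)\}$, which is exactly the contrapositive of your injection argument.
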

\begin{proof}
Let $P$ be a partial order with perfectly many initial intervals. Let $T
\subseteq T(P)$ be a perfect tree.

We first show that there exist $x \in P$ such that both
\[ \{I \in \Int(P) \colon x \notin I\} \text{ and } \{I \in \Int(P) \colon x
\in
I\}\]
are uncountable. Let $\tau \in T$ be such that both $\tau_0 = \tau \conc
\langle 0 \rangle$ and $\tau_1 = \tau \conc \langle 1 \rangle$ belong to $T$.
Let $x = |\tau|$ and notice that $x \in P$. For $i<2$ define $T_i = \{\sigma
\in T \colon \sigma \sqsubseteq \tau_i \lor \tau_i \sqsubseteq \sigma\}$. The
trees $T_0$ and $T_1$ are perfect and witness the fact that the two
collections of initial intervals are uncountable.\smallskip

Now, suppose that condition (i) fails and let $\Int (\inc x) = \{J_n \colon n
\in \N\}$. We aim to show that (ii) holds.

Suppose for a contradiction that $\downs x$ has countably many initial
intervals and let $\Int (\downs x) = \{I_n \colon n \in \N\}$. Then it is not
difficult to show that
\[ \{I \in \Int(P) \colon x \notin I\} = \{ I_n \cup \Down J_m \colon n,m \in
\N\}. \]
This contradicts the fact that $\{I \in \Int(P) \colon x \notin I\}$ is
uncountable.

Similarly, suppose that $\ups x$ has countably many initial intervals and let
$\Int (\ups x) = \{I_n \colon n \in \N\}$. Then, it is not difficult to show
that
\[
\{I \in \Int(P) \colon x \in I\} =
\{ \Down (\{x\} \cup I_n \cup J_m) \colon n,m \in \N\}.
\]
This contradicts the fact that $\{I \in \Int(P) \colon x \in I\}$ is
uncountable. Therefore, condition (ii) holds.
\end{proof}

\begin{theorem}[\ATR]\label{Bonnet3->}
Every scattered partial order with no infinite antichains has countably many
initial intervals.
\end{theorem}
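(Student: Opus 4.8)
The plan is to argue by contradiction and manufacture a dense linear suborder of $P$. Suppose $P$ is scattered, has no infinite antichains, but has uncountably many initial intervals. Since we work in \ATR, this upgrades to the assertion that $P$ has \emph{perfectly} many initial intervals, so I would fix a nonempty perfect subtree $T_0$ of $T(P)$ (Lemma~\ref{T(P)}). Then, using Corollary~\ref{model existence}, I would pass to a countable coded $\omega$-model $M$ with $P,T_0\in M$ satisfying both \DC\ and \ATRl{P}. Inside $M$ the partial order $P$ still has perfectly many initial intervals (witnessed by $T_0$) and still has no infinite antichains (an antichain living in $M$ is an antichain in $V$); moreover $M$ proves Lemma~\ref{pm to um}, being an \ACA\ theorem, and by Theorem~\ref{PTT lightface} it upgrades ``uncountably many'' to ``perfectly many initial intervals'' for every $P$-computable partial order.

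Working inside $M$, I would build by dependent choice a subtree $S\subseteq 2^{<\N}$ together with, for each $\sigma\in S$, an element $x_\sigma\in P$ and a $P$-computable suborder $P_\sigma$ of $P$, maintaining the invariant that $P_\sigma$ has perfectly many initial intervals. Start with $P_{\langle\rangle}=P$. Given $\sigma\in S$, apply Lemma~\ref{pm to um} to $P_\sigma$ to obtain $x_\sigma$: if clause~(i) holds give $\sigma$ the single successor $\si 0$ with $P_{\si 0}=\{y\in P_\sigma\colon y\perp x_\sigma\}$; if clause~(ii) holds give $\sigma$ the two successors $\si 0,\si 1$ with $P_{\si 0}=\{y\in P_\sigma\colon y\prec x_\sigma\}$ and $P_{\si 1}=\{y\in P_\sigma\colon y\succ x_\sigma\}$. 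In either case the new set(s) have uncountably, hence (by the upgrade) perfectly, many initial intervals, so the invariant persists and $S$ is an infinite pruned tree. To make this a legitimate dependent choice I would also carry, alongside each $P_\sigma$, a perfect-subtree witness $T_\sigma\subseteq T(P_\sigma)$; the existential resolved at each step is then \SI11\ and is met by combining Lemma~\ref{pm to um} with the lightface perfect tree theorem.

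Two observations, still inside $M$, finish the proof. First, every $\sigma\in S$ has a \emph{branching} node extending it (one with both successors in $S$): otherwise $\{\tau\in S\colon\sigma\sqsubseteq\tau\}$ is a pruned ``stick'' determining a path $f\in M$ with $P_{f\restr(m+1)}=\{y\in P_{f\restr m}\colon y\perp x_{f\restr m}\}$ for all $m\geq|\sigma|$, whence $\{x_{f\restr m}\colon m\geq|\sigma|\}$ is an infinite antichain in $P$, a contradiction. Hence $S$ contains a perfect subtree, witnessed by an embedding $g\colon 2^{<\N}\to S$ sending $\langle\rangle$ to the least branching node of $S$ and $\si i$ to the least branching node above $g(\sigma)\conc\langle i\rangle$. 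Second, at a branching node $\sigma$ every element of $P_{\si 0}$ is $\prec x_\sigma$ and every element of $P_{\si 1}$ is $\succ x_\sigma$, and this relation propagates along the nesting $P_\tau\subseteq P_\sigma$ for $\tau\sqsupseteq\sigma$; writing $y_\mu=x_{g(\mu)}$, this up/down labeling forces $y_\mu\prec_P y_\nu$ precisely when $\mu$ lies to the left of $\nu$ in $2^{<\N}$, i.e.\ precisely when the dyadic rational of $(0,1)$ associated with $\mu$ in the usual way is smaller than that of $\nu$. So $\mu\mapsto y_\mu$ is an isomorphism onto a dense linear suborder of $P$, and $P$ is not scattered by Lemma~\ref{scatt}. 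Since this suborder is a set of $M$ and being a dense linear suborder of $P$ is arithmetic, the non-scatteredness of $P$ holds in $V$ too, contradicting the hypothesis.

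The hard part is the interplay between this recursion and the complexity of the predicate ``has perfectly many initial intervals'', which is only \SI11\ even over \ATR: this is exactly why the construction of $S$ must be run inside an $\omega$-model of \DC, and why that model must additionally satisfy \ATRl{P}, so that the invariant can be re-established, via the lightface perfect tree theorem, after each application of Lemma~\ref{pm to um} to the $P$-computable suborder $P_\sigma$. Everything else is routine and goes through in \ACA\ (mostly in \RCA): the antichain dichotomy, the extraction of the perfect subtree, and the dyadic labeling of its branching nodes. The one structural point to keep in mind is that in clause~(ii) of Lemma~\ref{pm to um} the entire cone $\{y\colon y\prec x_\sigma\}$ lies below the entire cone $\{y\colon y\succ x_\sigma\}$; this is what makes the selected branching-node elements fall into a \emph{linearly}, and densely, ordered suborder of $P$ rather than merely a tree.
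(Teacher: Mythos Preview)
Your proposal is correct and follows essentially the same route as the paper's proof: pass to a countable coded $\omega$-model $M$ of \DC\ and \ATRl{P} via Corollary~\ref{model existence}, iterate Lemma~\ref{pm to um} (upgrading ``uncountably many'' to ``perfectly many'' at each step via Theorem~\ref{PTT lightface}) to build a tree of nested $P$-computable suborders, and then extract either an infinite antichain from a run of clause~(i) steps or a dense linear suborder from the clause~(ii) branching nodes. The only differences are cosmetic: the paper encodes the construction as a tree in $3^{<\N}$ (label $2$ for clause~(i), labels $0,1$ for clause~(ii)) and carries out the recursion externally in \ACA\ using $M$ as an arithmetic parameter, rather than threading perfect-subtree witnesses through \DC\ inside $M$ as you do.
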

\begin{proof}
Let $P$ be a partial order with uncountably many initial intervals.

Let $\Fin(P)$ the set of (codes for) finite subsets of $P$. For all $F, G, H
\in \Fin(P)$,  let
\[ P_{F,G,H} = \bigcap_{x \in F} \downs x \cap \bigcap_{x \in G} \ups x \cap
   \bigcap_{x \in H} \inc x. \]

We want to define a pruned tree $T \subseteq 3^{<\N}$ and a function $f
\colon T \to \Fin(P)^3$ such that the following hold (where $f(\sigma) =
(F_\sigma, G_\sigma, H_\sigma)$ and $P_\sigma = P_{f(\sigma)}$):
\begin{enumerate}[\quad(i)]
 \item $f(\langle \rangle) = (\emptyset, \emptyset, \emptyset)$;
 \item for all $\sigma \in T$, $\si0 \in T$ if and only if $\si1 \in T$ if
     and only if $\si2 \notin T$ (in other words there are two
     possibilities: either exactly \si0 and \si1 belong to $T$, or only
     $\si2 \in T$);
 \item if $\si0 \in T$, then $f(\si0) = (F_{\sigma} \cup \{x\}, G_\sigma,
     H_\sigma)$ and $f(\si1) = (F_\sigma, G_\sigma \cup \{x\}, H_\sigma)$
     for some $x \in P_\sigma$;
 \item if $\si2 \in T$, then $f(\si2) = (F_\sigma, G_\sigma, H_\sigma \cup
     \{x\})$ for some $x \in P_\sigma$.
\end{enumerate}

We first show that if there exist $T$ and $f$ as above, then $P$ is not
scattered or it contains an infinite antichain.

First suppose there exists a path $g \in [T]$ such that $g(n)=2$ for
infinitely many $n$. Then let
\[ D = \bigcup_{n \in \N} H_{g \restr n}. \]
It is easy to check, using (iv) and the definition of $P_{F,G,H}$, that $D$
is an infinite antichain.

If there are no paths $g \in [T]$ such that $g(n)=2$ for infinitely many $n$
then it is easy to see, using (ii), that $T$ is perfect. For all $\si0 \in
T$, let $x_\sigma$ be the unique element of $F_{\si0} \setminus F_\sigma$. We
claim that
\[ Q = \{ x_\sigma \colon \si0 \in T\} \]
is a dense linear order in $P$.

We first note that $x_\sigma \neq x_\tau$ for $\sigma, \tau \in T$ with
$\sigma \neq \tau$. Now fix distinct $x_\sigma, x_\tau \in Q$ with the goal
of showing that they are comparable in $P$ and that there exists an element
of $Q$ strictly between them. First assume that $\sigma$ and $\tau$ are
comparable as sequences, let us say $\sigma \sqsubset \tau$. Then, using
(iii), $x_\tau \prec x_\sigma$ if $\si0 \sqsubseteq \tau$ and $x_\sigma \prec
x_\tau$ if $\si1 \sqsubseteq \tau$. Suppose $x_\tau \prec x_\sigma$ (the
other case is similar) and let $\eta \in T$ so that $\tau \conc \langle 1
\rangle \sqsubseteq \eta$ and $\eta \conc \langle 0 \rangle \in T$. Then
$x_\tau \prec x_\eta \prec x_\sigma$ by (iii). Suppose now that $\sigma$ and
$\tau$ are not one initial segment of the other. We may assume that $\eta
\conc \langle 0 \rangle \sqsubseteq \sigma$ and $\eta \conc \langle 1 \rangle
\sqsubseteq \tau$ for some $\eta$. Then $x_\eta \in Q$ and, using (iii)
again, $x_\sigma \prec x_\eta \prec x_\tau$.\smallskip

It remains to show that we can define $T$ and $f$ satisfying (i)--(iv).

By Theorem \ref{countable or perfect}, $P$ has perfectly many initial
intervals. Let $U$ be a perfect subtree of $T(P)$. By Corollary \ref{model
existence}, there exists a countable coded $\omega$-model $M$ such that $P, U
\in M$ and $M$ satisfies \DC\ and \ATRl{P}.

We recursively define $T$ and $f$ using $M$ as a parameter. Let $\langle
\rangle \in T$ and $f(\langle \rangle) = (\emptyset, \emptyset, \emptyset)$
as required by (i). Note that $M$ satisfies ``$T(P_{\langle \rangle})$
contains a perfect subtree''. Let $\sigma \in T$ and assume by arithmetical
induction that $M$ satisfies ``$T(P_\sigma)$ contains a perfect subtree''.
Since $M$ is a model of \ACA, by Lemma \ref{pm to um} applied to $P_\sigma$,
there exists $x \in P_\sigma$ such that either
\begin{enumerate}[\quad(a)]
 \item $M$ satisfies ``$T(P_\sigma \cap \inc x)$ has uncountably many
     paths'', or
 \item $M$ satisfies ``both $T(P_\sigma \cap \downs x)$ and $T(P_\sigma
     \cap \ups x)$ have uncountably many paths''.
\end{enumerate}
Search for the least $x$ with this arithmetical property. If (a) holds (and
we can check this arithmetically outside $M$), use \ATRl{P} within $M$ to
apply Theorem \ref{PTT lightface} to the $P$-computable tree $T(P_\sigma \cap
\inc x)$. We obtain that $M$ satisfies ``$T(P_\sigma \cap \inc x)$ contains a
perfect subtree''. Thus, let $\si2 \in T$ and set $f(\si2) = (F_\sigma,
G_\sigma, H_\sigma \cup \{x\})$. If (b) holds, then arguing analogously we
obtain that $M$ satisfies ``both $T(P_\sigma \cap \downs x)$ and $T(P_\sigma
\cap \ups x)$ contain perfect subtrees''. Thus let $\si0, \si1 \in T$ and set
\[f(\si0) = (F_\sigma \cup \{x\}, G_\sigma, H_\sigma) \text{ and }
  f(\si1) = (F_\sigma, G_\sigma \cup \{x\}, H_\sigma).\]
In any case, (ii)-(iv) are satisfied and the induction hypothesis that $M$
satisfies ``$T(P_\sigma)$ contains a perfect subtree'' is preserved.
\end{proof}

\begin{theorem}\label{Bonnet3->ATR}
Over \ACA, the following are equivalent:
\begin{enumerate}
 \item \ATR;
 \item every scattered partial order with no infinite antichains has
     countably many initial intervals;
 \item every scattered linear order has countably many initial intervals.
\end{enumerate}
\end{theorem}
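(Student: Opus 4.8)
The plan is to close the cycle $(1)\implies(2)\implies(3)\implies(1)$. The implication $(1)\implies(2)$ is exactly Theorem \ref{Bonnet3->}, which we have already proved in \ATR. The implication $(2)\implies(3)$ is immediate, since a scattered linear order is in particular a scattered partial order with no infinite antichains (an antichain in a linear order has at most one element). So the entire content of the theorem lies in the reversal $(3)\implies(1)$, and this is where essentially all the work goes.

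For $(3)\implies(1)$ I would argue over \ACA\ and invoke Theorem \ref{countable or perfect}, which already gives the equivalence of \ATR\ with condition (3) of \emph{that} theorem, namely "every scattered linear order has countably many initial intervals". Since that condition is verbatim our condition (3), there is nothing left to do beyond citing Clote's result as recorded in Theorem \ref{countable or perfect}. In other words, the theorem is essentially a repackaging: $(1)\Leftrightarrow(3)$ is Theorem \ref{countable or perfect}, and $(2)$ is sandwiched between them by Theorem \ref{Bonnet3->} (giving $(1)\implies(2)$) and the trivial observation that linear orders have no nontrivial antichains (giving $(2)\implies(3)$).

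The only genuine obstacle — and the reason the theorem is not completely trivial — is the forward direction $(1)\implies(2)$, i.e.\ Theorem \ref{Bonnet3->}, whose proof uses the perfect tree theorem inside a countable coded $\omega$-model of \ATRl{P} obtained from Corollary \ref{model existence}, together with the combinatorial analysis in Lemma \ref{pm to um} separating a "perfectly many initial intervals" situation into either an infinite antichain or a dense suborder. Since that work is already done, the write-up here is short: assemble the three implications and remark, as Clote's proofs yield, that condition (3) of Theorem \ref{countable or perfect} is the needed reversal. Thus the proof reads: $(1)\implies(2)$ is Theorem \ref{Bonnet3->}; $(2)\implies(3)$ is trivial since linear orders have no infinite antichains; and $(3)\implies(1)$ follows from Theorem \ref{countable or perfect}.
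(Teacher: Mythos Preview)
Your proposal is correct and matches the paper's logical structure: $(1)\implies(2)$ via Theorem \ref{Bonnet3->}, $(2)\implies(3)$ trivially, and $(3)\implies(1)$ via Clote's result. The only difference is presentational: since Theorem \ref{countable or perfect} is stated in the paper without proof (and Clote's original paper only explicitly claims the equivalence with his condition (2)), the paper chooses to spell out the argument for $(3)\implies(1)$ in full --- building the linear order $L=\sum_{i\in\N}T_i$ with the Kleene--Brouwer ordering from a sequence of trees each having at most one path, showing $L$ is scattered via a decomposition of each $T_i$ as $X+\sum_{n\in\omega^*}Y_n$ into well-orders, and then reading off $\{i:[T_i]\neq\emptyset\}$ from an enumeration of $\Int(L)$ --- rather than simply citing the earlier theorem. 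Your citation is legitimate given what the paper has already asserted, but be aware that you are leaning on a result whose proof the paper has not yet actually provided.
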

\begin{proof}
(1) $\implies$ (2) is Theorem \ref{Bonnet3->} and (2) $\implies$ (3) is
immediate. We show (3) $\implies$ (1) by essentially repeating the proof of
\cite[Theorem 18]{Clo89}.

Assume \ACA. We wish to prove \ATR. By \cite[Theorem V.5.2]{sosoa}, \ATR\ is
equivalent (over \RCA) to the statement asserting that for every sequence of
trees $\{T_i \colon i \in \N\}$ such that every $T_i$ has at most one path,
there exists the set $\{i \in \N \colon [T_i] \neq \emptyset\}$. So let
$\{T_i \colon i \in \N\}$ be such a sequence. Let us order each $T_i$ with
the Kleene-Brouwer order $\leq_\KB$ and define the linear order $L = \sum_{i
\in \N} T_i$

We aim to show that $L$ is scattered. By Lemma \ref{scattls}, it suffices to
prove that every $T_i$ is scattered. To this end, we show that if a tree $T$
has at most one path then the Kleene-Brouwer order on $T$ is of the form
\[    \tag{$*$} X + \sum_{n \in \omega^*} Y_n, \]
where $X$ and the $Y_n$ are (possibly empty) well-orders. Applying Lemma
\ref{scattls} again, we obtain that $T$ is scattered.

If $T$ has no path, then \ACA\ proves that $\leq_\KB$ well-orders $T$, and
hence we can take $X=T$ and the $Y_n$'s empty. Now let $f$ be the unique path
of $T$. Let $X = \{\sigma \in T \colon (\forall n) \sigma<_\KB f \restr n\}$
and $Y_n = \{\sigma \in T \colon f \restr n+1<_{\KB} \sigma \leq_\KB f \restr
n\}$, for all $n \in \N$. It is straightforward to see that $(*)$ holds. We
now claim that $X$ is a well-order. Suppose not, and let $(\sigma_n)_{n \in
\N}$ be an infinite descending sequence in $X$. Form the tree $T_0 = \{\sigma
\in T \colon (\exists n) \sigma \sqsubseteq \sigma_n\}$. Then $T_0$ is not
well-founded and so it has a path. As $T_0$ is a subtree of $T$, this path
must be $f$. Let $i \in \N$ be such that $\sigma_0 \restr i = f\restr i$ and
$\sigma_0(i) < f(i)$ (such an $i$ exists because $\sigma_0 \in X$). On the
other hand, $f \restr i+1 \in T_0$, and thus $f \restr i+1 \sqsubseteq
\sigma_n$ for some $n \in \N$. It follows that $\sigma_0<_\KB \sigma_n$, a
contradiction. To show that each $Y_n$ is a well-order notice that $Y_n =
\{\sigma \in T \colon f \restr n \sqsubset \sigma \land f(n) < \sigma(n)\}
\cup \{f \restr n\}$.

Apply (3) to $L$ and let $\Int(L) = \{I_n \colon n \in \N\}$. It is easy to
check that $T_i$ has a path if and only if
\[
(\exists n) \big( \bigcup_{j<i} T_j \subseteq I_n \land T_i \nsubseteq I_n \land
L \setminus I_n \text{ has no least element}\big).
\]
Therefore, the set $\{i \in \N \colon [T_i] \neq \emptyset\}$ exists by
arithmetical comprehension.
\end{proof}

It is worth noticing that a natural weakening of condition (3) of Theorem
\ref{Bonnet3->ATR} is provable in \RCA:

\begin{lemma}[\RCA]\label{Bonnet3w->RCA}
Every linear order with perfectly many initial intervals is not scattered.
\end{lemma}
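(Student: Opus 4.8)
The plan is to show directly that $\Q$ embeds into $P$. Since $P$ has perfectly many initial intervals, fix a nonempty perfect tree $T\subseteq 2^{<\N}$ with $[T]\subseteq\Int(P)$. Everything rests on one consequence of $P$ being a \emph{linear} order: if $I\in\Int(P)$ and $a,b\in P$ satisfy $a\in I$ and $b\notin I$, then $a\prec b$ (we have $a\neq b$, and $b\preceq a$ would force $b\in I$ by downward closure, so comparability gives $a\prec b$). Thus a single initial interval of $P$ separating two of its elements records their order.

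First I would extract a family of ``test points'' from the splitting structure of $T$. Working in \RCA, define by primitive recursion a map $\sigma\mapsto\rho_\sigma$ from $2^{<\N}$ into $T$: let $\rho_{\langle\rangle}$ be the least $\rho\in T$ with $\rho\conc\langle 0\rangle,\rho\conc\langle 1\rangle\in T$, and for $i<2$ let $\rho_{\sigma\conc\langle i\rangle}$ be the least $\rho\in T$ with $\rho_\sigma\conc\langle i\rangle\sqsubseteq\rho$ and $\rho\conc\langle 0\rangle,\rho\conc\langle 1\rangle\in T$. Perfectness of $T$ guarantees that every search succeeds (and that $\rho_\sigma\conc\langle i\rangle\in T$, since $\rho_\sigma$ is a splitting node), so this is a legitimate recursion in \RCA. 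Put $x_\sigma=|\rho_\sigma|$. A path of $T$ through $\rho_\sigma\conc\langle 1\rangle$ — one exists in \RCA\ since $T$, being perfect, is pruned — gives an initial interval of $P$ containing $|\rho_\sigma|$, so $x_\sigma\in P$.

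Next I would pin down the ordering of the $x_\sigma$. Fix $\sigma\sqsubset\tau$; by construction $\rho_\tau$ extends $\rho_\sigma\conc\langle\tau(|\sigma|)\rangle$, so the bit of $\rho_\tau$ in position $x_\sigma=|\rho_\sigma|$ is $\tau(|\sigma|)$. Choosing a path $f\in[T]$ through $\rho_\tau\conc\langle 1-\tau(|\sigma|)\rangle$, the initial interval $\{y:f(y)=1\}$ contains exactly one of $x_\sigma$ and $x_\tau$, so by the linearity observation $x_\sigma\prec x_\tau$ when $\tau(|\sigma|)=1$ and $x_\tau\prec x_\sigma$ when $\tau(|\sigma|)=0$. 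For incomparable $\sigma,\tau$ with longest common prefix $\eta$, applying this to the pairs $\eta\sqsubset\sigma$ and $\eta\sqsubset\tau$ shows that $x_\sigma$ and $x_\tau$ lie strictly on opposite sides of $x_\eta$, with $x_\sigma$ below $x_\eta$ iff $\sigma(|\eta|)=0$. In particular $\sigma\mapsto x_\sigma$ is injective, and these are precisely the rules governing the assignment $\sigma\mapsto d_\sigma$, where $d_\sigma\in(0,1)$ is the dyadic rational whose binary expansion is $\sigma$ followed by $1$; hence $x_\sigma\prec x_\tau\iff d_\sigma<d_\tau$ for all $\sigma,\tau$. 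Since $\{d_\sigma:\sigma\in 2^{<\N}\}$ is exactly the set of dyadic rationals in $(0,1)$, the map sending a dyadic rational $q$ to $x_\sigma$ — where $\sigma$ is the string, uniformly computable from $q$, with $q=d_\sigma$ — is an order embedding of this dense linear order without endpoints into $P$. Precomposing with the embedding of $\Q$ into the dyadic rationals in $(0,1)$, available in \RCA\ by back-and-forth, yields an embedding of $\Q$ into $P$; hence $P$ is not scattered.

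I do not expect a single deep obstacle, but two points need care: the case analysis showing the $x_\sigma$ obey exactly the dyadic ordering rules (short but slightly fiddly, the incomparable case in particular), and keeping the construction inside \RCA. For the latter the discipline is to exhibit an honest function $\Q\to P$ obtained by composing total functions, rather than trying to produce the image of the embedding as a subset of $P$: that range need not exist in \RCA, which is why I would argue via an embedding of $\Q$ rather than via Lemma \ref{scatt}.
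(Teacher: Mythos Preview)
Your argument is correct and rests on the same core idea as the paper's: the lengths of splitting nodes of the perfect tree $T$ are elements of $P$, and the branching structure of $T$ (read through the fact that paths code initial intervals of a \emph{linear} order) forces these elements to sit in a dense linear pattern. The verification you sketch via separating initial intervals is exactly the mechanism the paper alludes to when it says the density argument is ``similar to the one in the proof of Theorem \ref{Bonnet3->}''.

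Where you diverge is in packaging. The paper simply sets
\[
Q=\{x\in L: (\exists\sigma\in T)(|\sigma|=x\ \land\ \sigma\conc\langle0\rangle,\sigma\conc\langle1\rangle\in T)\},
\]
observes that this is a $\Delta^0_0$ definition (the existential is bounded by $|\sigma|=x$), so $Q$ exists in \RCA, checks that $Q$ is dense, and invokes Lemma \ref{scatt}. You instead recursively pick out a cofinal family $\rho_\sigma$ of splitting nodes indexed by $2^{<\N}$, identify the order type of $\{x_\sigma\}$ with the dyadic rationals, and hand-build the embedding of $\Q$. Your worry that the range $\{x_\sigma:\sigma\in 2^{<\N}\}$ might not exist in \RCA\ is legitimate for \emph{your} indexed family (it is the range of a total recursive function, hence only $\Sigma^0_1$ a priori), but the paper sidesteps this entirely: taking \emph{all} splitting positions rather than only the recursively selected ones gives a $\Delta^0_0$ set, so Lemma \ref{scatt} applies without difficulty. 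Your route buys a self-contained embedding and a concrete order type; the paper's buys brevity and avoids the dyadic bookkeeping.
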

\begin{proof}
Let $L$ be a linear order and $T \subseteq T(L)$ be a perfect tree. Define
\[
Q = \{x \in L \colon (\exists \sigma \in T) (|\sigma| = x \land \si0, \si1
\in T)\}.
\]
The argument showing that $Q$ is a dense subset of $L$ is similar to the one
in the proof of Theorem \ref{Bonnet3->}.
\end{proof}

\section{The right to left directions}\label{section:<-}

In this section we study the right to left directions of Theorems
\ref{Bonnet1}, \ref{ET2}, and \ref{Bonnet3}. The right to left direction of
Theorem \ref{Bonnet3} naturally splits into two statements with the same
hypothesis (the existence of countably many initial intervals) and different
conclusions (the partial order is scattered and the partial order has no
infinite antichains). We have thus four different statements altogether. All
these statements have simple proofs in \ACA, but it turns out that each of
them can be proved in a properly weaker system.

\subsection{Proofs in \RCA}\label{section:RCA}

We start with a simple observation about the right to left direction of
Theorem \ref{ET2}.

\begin{lemma}[\RCA]\label{ET2<-}
Every partial order which is a finite union of ideals has no infinite strong
antichains.
\end{lemma}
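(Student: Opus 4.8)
The plan is to argue directly in \RCA, by contraposition or by a simple counting argument. Suppose $P = \bigcup_{i<k} A_i$ with each $A_i$ an ideal, and suppose toward a contradiction that $S \subseteq P$ is an infinite strong antichain. First I would observe that each $A_i$ can contain at most one element of $S$: if $x, y \in S \cap A_i$ with $x \neq y$, then since $A_i$ is an ideal there is $z \in A_i \subseteq P$ with $x \preceq z$ and $y \preceq z$, so $x$ and $y$ are compatible in $P$, contradicting that $S$ is a strong antichain in $P$. Hence $S = \bigcup_{i<k} (S \cap A_i)$ has at most $k$ elements, contradicting that $S$ is infinite.

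The one point that needs a little care in \RCA\ is the formalization: "$P$ is a finite union of ideals" means there is a finite sequence $\langle A_i : i < k \rangle$ (coded by a single set) of ideals with $P = \bigcup_{i<k} A_i$, and "infinite strong antichain" means a set $S$ with $(\forall n)(\exists x \in S)(x > n)$. Given such $S$, I would note that the function sending an element of $S$ to the least $i < k$ with $x \in A_i$ is total (since $P = \bigcup_{i<k} A_i$) and, by the paragraph above, injective on $S$; composing with a bijection of $S$ with $\N$ (which exists in \RCA, since $S$ is infinite) would yield an injection of $\N$ into $\{0,\dots,k-1\}$, which is refutable in \RCA\ by a finite pigeonhole / bounded $\Sigma^0_1$-induction argument. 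Alternatively, and more cleanly, I would just pick $k+1$ distinct elements $x_0, \dots, x_k$ of $S$ (possible since $S$ is infinite), assign to each the least index in $\{0,\dots,k-1\}$ of an $A_i$ containing it, and apply the finite pigeonhole principle to get two of them in the same ideal — contradiction.

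I do not expect any serious obstacle here: the whole argument is elementary and the only ingredient beyond pure logic is the finite pigeonhole principle, which is available in \RCA. The contrast with the forward direction (Lemmas \ref{lemma 2} and \ref{lemma 4}, which genuinely need \ACA) is precisely that here we never need to \emph{construct} the ideals or a maximal strong antichain — they are handed to us — so no comprehension beyond $\Sigma^0_0$ is required.
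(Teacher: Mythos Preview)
Your argument is correct and is exactly the paper's approach: the paper observes that an ideal contains no two incompatible elements, so a union of $k$ ideals bounds the size of every strong antichain by $k$. Your added care about formalizing the finite pigeonhole step in \RCA\ is fine but not strictly necessary here.
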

\begin{proof}
Since an ideal does not contain incompatible elements, if the partial order
is the union of $k$ ideals we have even a finite bound on the size of strong
antichains.
\end{proof}

Another statement that can be proved in \RCA\ is the following half of the
right to left direction of Theorem \ref{Bonnet3}.

\begin{theorem}[\RCA]\label{Bonnet3<-RCA}
Every partial order with countably many initial intervals is scattered.
\end{theorem}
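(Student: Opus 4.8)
I want to prove the contrapositive: if a partial order $P$ is not scattered, then it has uncountably many initial intervals. By Lemma~\ref{scatt}, non-scatteredness gives a dense linear order $D\subseteq P$ (as an actual set, in \RCA). The strategy is to produce, computably from $D$, a perfect tree $T\subseteq 2^{<\N}$ all of whose paths are characteristic functions of initial intervals of $P$, so that $P$ has perfectly many initial intervals and hence (by the remarks following Lemma~\ref{T(P)}, which are proved in \RCA) uncountably many initial intervals.

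**Key steps.** First I would enumerate $D=\{d_0,d_1,\dots\}$ and, for each binary string $\sigma$, assign a ``target'' element — really a cut — inside $D$: the root corresponds to the whole of $D$, and passing to $\si0$ or $\si1$ splits a nonempty open interval of $D$ into its lower and upper halves about some element lying strictly inside. Density of $D$ guarantees we can always find such a splitting element, so the resulting tree is perfect. To each path $g\in[T]$ this associates a nonempty proper lower cut $C_g$ of $D$ (the union of the intervals chosen along $g$, or its downward closure in $D$), and distinct paths give distinct cuts because at the first place they differ one path's cut contains the relevant splitting element and the other's does not. Finally, from the cut $C_g$ of $D$ I would form the genuine initial interval $I_g=\{x\in P:(\exists d\in C_g)\,x\preceq d\}\cup\{x\in P:(\forall d\in D\setminus C_g)\,x\prec d\}$ — or more simply the set of $x\in P$ that are $\preceq$ some element of $C_g$ — and check that $I_g$ is downward closed and that $g\mapsto I_g$ is injective. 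Packaging this as a single perfect subtree $T'$ of $T(P)$ (replacing each path $g$ by the characteristic function of $I_g$) is routine once the definitions are set up.

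**Main obstacle.** The only delicate point is bookkeeping: I must carry out the interval-splitting so that the map from paths to initial intervals is genuinely injective and so that the tree of characteristic functions is itself perfect (not merely that $T$ is perfect). The cleanest way is probably to fix, along the construction, not just which element of $D$ we split at but also to ensure some element of $P$ distinguishes the two branches — e.g. arrange that the splitting point $d$ satisfies: $d$ is in the lower interval on the $1$-side and not on the $0$-side, so the characteristic functions already differ at coordinate $d$. Then the perfectness of $T'\subseteq T(P)$ is immediate from that of $T$. Everything here is arithmetically (indeed primitive-recursively) definable from $D$, so it all goes through in \RCA; no comprehension beyond \SI00 is needed. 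I expect the write-up to be short, with the bulk of it being the verification that the constructed sets are initial intervals and pairwise distinct.
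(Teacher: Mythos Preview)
Your overall strategy---prove the contrapositive, extract a dense linear suborder $D\subseteq P$ via Lemma~\ref{scatt}, and build a perfect subtree of $T(P)$---is exactly the paper's. The gap is in the step you call ``routine'': passing from cuts $C_g$ of $D$ to a perfect subtree of $T(P)$. Your candidate $I_g=\{x\in P:(\exists d\in C_g)\,x\preceq d\}$ is the downward closure $\Down C_g$, and the paper notes explicitly (in the paragraph defining $\Down X$) that the existence of downward closures is equivalent to \ACA\ over \RCA. Concretely, for $x\in P\setminus D$ the question ``is there $d\in C_g$ with $x\preceq d$?'' is genuinely $\Sigma^0_1$ with no evident $\Pi^0_1$ form, so the characteristic function of $I_g$ is not computable from $g$, and your claim that ``no comprehension beyond $\Sigma^0_0$ is needed'' is not justified.

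What the argument actually requires is an embedding $f\colon 2^{<\N}\to T(P)$ built stage by stage, where at each step one must decide membership for \emph{every} $x\in P$ below the current length---including those $x$ incomparable with all of $D$, or lying below some elements of $D$ and not others---in a way that keeps the string in $T(P)$ and preserves room to split later. The paper handles this with the notion ``$x$ is free for $\tau$'' and a careful choice of nearby $a'\prec b'$ in $D$ with no small element of $P$ strictly between them; every free $x$ is then placed according to its relation to $a'$ and $b'$, and the inductive invariant is that at least two elements of $D$ remain free for $f(\sigma)$. This is not bookkeeping but the main technical content of the proof, and it is precisely what your sketch is missing.
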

\begin{proof}
We show that if $P$ is not scattered, then $P$ has perfectly many initial
intervals. By Lemma \ref{scatt} we may assume that $P$ contains a dense
linear order $Q$.

We define by recursion an embedding $f \colon 2^{<\N} \to T(P)$. Thus $T_0 =
\{\tau \in T(P) \colon (\exists \sigma \in 2^{<\N}) \tau \sqsubseteq
f(\sigma)\}$ is a perfect subtree of $T(P)$. Since $\tau \in T_0$ if and only
if $(\exists \sigma \in 2^{<\N})(|\sigma| = |\tau| \land \tau \sqsubseteq
f(\sigma))$, $T_0$ exists in \RCA.

We say that $x \in P$ is \emph{free for} $\tau \in T(P)$ if
\[ (\forall y < |\tau|) ((\tau(y) =1 \implies x \npreceq y) \land (\tau(y)=0
\implies y \npreceq x)). \]
In other words, $x$ is free for $\tau$ if and only if there exist $\tau_0,
\tau_1 \in T(P)$ with $\tau \sqsubset \tau_i$ and $\tau_i(x)=i$. Since $T(P)$
is a pruned tree this means that there exist two initial intervals of $P$
whose characteristic function extends $\tau$, one containing $x$ and the
other avoiding $x$.

Let $f(\langle \rangle) = \langle \rangle$. Suppose we have defined
$f(\sigma) = \tau$. Assume by \SI01 induction that $Q$ contains at least two
(and hence infinitely many) elements that are free for $\tau$. Then search
for  $a \prec b \prec c$ in $Q$ that are free for $\tau$.  We will define
$\tau_0, \tau_1 \in T(P)$ which are extensions of $\tau$ with $|\tau_i| =
b+1$ and $\tau_i(b) =i$. Thus $\tau_0$ and $\tau_1$ are incompatible and we
can let $f(\si i)=\tau_i$.

We show how to define $\tau_0$ (to define $\tau_1$ replace $a$ with $b$ and
$b$ with $c$). Since $\{x \in P \colon x<b\}$ is finite, we can find $a', b'
\in Q$ with $a \prec a' \prec b' \prec b$ such that $a',b'>b$, and for no $x
\in P$ with $x<b$ we have $a' \prec x \prec b'$. Given $x < |\tau_0|$ we need
to define $\tau_0(x)$, and we proceed by cases (notice that the first three
conditions are determined by the fact that we want $\tau_0 \in T(P)$ and
$\tau_0 \sqsupseteq \tau$):
\begin{itemize}
  \item if $x \notin P$ let $\tau_0 (x) =0$;
  \item if $x \in P$ is not free for $\tau$ because there exists $y <
      |\tau|$ such that $\tau(y) =0$ and $y \preceq x$ let $\tau_0 (x) =
      0$;
  \item if $x \in P$ is not free for $\tau$ because there exists $y <
      |\tau|$ such that $\tau(y) =1$ and $x \preceq y$ let $\tau_0 (x) =
      1$;
  \item if $x$ is free for $\tau$ we define $\tau_0(x)$ according to the
      following cases:
\begin{enumerate}[\quad(i)]
 \item if $x \prec a'$, let $\tau_0(x) =1$;
 \item if $x \succ b'$, let $\tau_0(x) =0$;
 \item otherwise, let $\tau_0(x) =0$.
\end{enumerate}
\end{itemize}
It is not difficult to check that $\tau_0$ extends $\tau$, $\tau_0(b) =0$ and
both $a'$ and $b'$ are free for $\tau_0$, preserving the induction
hypothesis.
\end{proof}

With regard to the other half of the right to left direction of Theorem
\ref{Bonnet3}, \RCA\ proves the following.

\begin{lemma}[\RCA]\label{ACpm}
An infinite antichain has perfectly many initial intervals.
\end{lemma}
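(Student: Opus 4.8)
The plan is to exhibit a nonempty perfect subtree of $T(P)$ when $P$ is an infinite antichain. The crucial observation is that when $P$ is an antichain, \emph{every} subset of $P$ is an initial interval, because the defining implication $x \preceq y \land y \in I \implies x \in I$ is vacuously satisfied whenever $x \neq y$ (distinct elements are incomparable) and trivially satisfied when $x = y$. Consequently $T(P)$ consists of exactly those $\sigma \in 2^{<\N}$ such that $\sigma(x) = 1$ implies $x \in P$; in particular, if $P$ were all of $\N$ then $T(P)$ would be the full binary tree $2^{<\N}$, which is perfect.

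In general $P$ is only an infinite subset of $\N$, so I would first fix, using \RCA, a strictly increasing enumeration $n \mapsto p_n$ of $P$ (this exists because $P$ is infinite), and then describe the perfect subtree directly. The cleanest approach is to define $T = \{\sigma \in T(P) \colon (\forall x < |\sigma|)(x \notin P \implies \sigma(x) = 0)\}$. This set exists by \SI00 comprehension. It is a tree (closure under initial segments is immediate), it is nonempty (it contains $\langle \rangle$), and it is perfect: given $\sigma \in T$, let $p_k$ be the least element of $P$ with $p_k \geq |\sigma|$, extend $\sigma$ by $0$'s up to position $p_k$, and then branch by putting $0$ or $1$ in coordinate $p_k$; both extensions lie in $T$ and are incomparable, so $\sigma$ has incomparable extensions in $T$. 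Finally, for every path $g \in [T]$ the set $\{x \colon g(x) = 1\}$ is a subset of $P$, hence an initial interval of $P$, so $[T] \subseteq \Int(P)$ and $P$ has perfectly many initial intervals.

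I do not expect any serious obstacle here: the only point requiring a little care is that $P$ is a subset of $\N$ rather than all of $\N$, which is handled by restricting attention to characteristic functions supported on $P$ and using the enumeration of $P$ to locate branching positions. Everything is done by \SI00 comprehension and primitive recursion, so the argument goes through in \RCA.
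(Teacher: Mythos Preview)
Your approach is essentially the same as the paper's: observe that when $P$ is an antichain the second defining clause of $T(P)$ is vacuous, so $T(P)$ consists precisely of those $\sigma \in 2^{<\N}$ with $\sigma(x)=1 \implies x\in P$, and this tree is perfect because $P$ is infinite. One small redundancy: the extra condition you impose in defining $T$ (namely $x \notin P \implies \sigma(x)=0$) is already part of the definition of $T(P)$, so in fact $T = T(P)$; the paper simply notes that $T(P)$ itself is perfect and invokes Lemma~\ref{T(P)}.
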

\begin{proof}
If $P$ is an antichain then the tree $T(P)$ consists of all $\sigma \in
2^{<\N}$ such that $x \notin P$ implies $\sigma(x)=0$. If $P$ is infinite it
is immediate that this tree is perfect and thus Lemma \ref{T(P)} implies that
$P$ has perfectly many initial intervals.
\end{proof}

\subsection{Proofs in \WKL}\label{section:WKL}

We now look at the right to left direction of Theorem \ref{Bonnet1}, which
states that every partial order with an infinite antichain contains an
initial interval that cannot be written as a finite union of ideals. The
proof can be carried out very easily in \ACA: just take the downward closure
of the given antichain. We improve this upper bound by showing that \WKL\
suffices. We first point out that \RCA\ proves a particular instance of the
statement.

\begin{lemma}[\RCA] \label{Bonnet1<-max}
Let $P$ be a partial order with a maximal (with respect to inclusion)
infinite antichain. Then there exists an initial interval that is not a
finite union of ideals.
\end{lemma}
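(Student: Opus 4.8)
Let $A \subseteq P$ be a maximal infinite antichain. The natural candidate is $I = \Down A$, but we cannot form this set in \RCA. Instead I would aim to define a suitable initial interval directly, using maximality of $A$ in an essential way.

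First I would observe that, since $A$ is maximal, every $x \in P$ is comparable to some element of $A$; hence the two sets
\[
I_0 = \{ x \in P \colon (\exists a \in A)\, x \preceq a \}
\quad\text{and}\quad
I_1 = \{ x \in P \colon (\forall a \in A)(x \not\preceq a \rightarrow a \prec x) \}
\]
need not exist in \RCA\ either (the first is exactly $\Down A$). So the real first step is to pick an enumeration of $A$ as $A = \{ a_n \colon n \in \N \}$ and work with the sets $\down{a_n}$, which do exist uniformly. The key is that, because $A$ is an antichain, the sets $\down{a_n} \cap A = \{a_n\}$ are pairwise disjoint on $A$, and this lets one detect membership in $\bigcup_n \down{a_n}$ by a search that, while $\Sigma^0_1$, can be packaged combinatorially.

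Second, I would set $I = \{ x \in P \colon x \in \down{a_n} \text{ for some } n \}$ and argue, even though this is only $\Sigma^0_1$, that it actually \emph{exists} as a set under the maximality hypothesis: for each $x$, either $x$ lies below some $a_n$, or $x$ is incomparable to all $a_n$ except finitely many (impossible, as that would contradict maximality unless $x$ is itself comparable to some $a_n$ from above). More precisely, by maximality $\{x\} \cup A$ is not an antichain, so there is an $n$ with $x \preceq a_n$ or $a_n \preceq x$; these two cases are mutually exclusive since $A$ is infinite and $x$ comparable from above to two elements of $A$ would force those to be comparable. Thus $x \in I \iff \neg(\exists n)(a_n \prec x)$, giving $I$ a $\Delta^0_1$ definition relative to the enumeration of $A$, so $I$ exists by $\Sigma^0_0$ comprehension. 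It is immediate that $I$ is an initial interval and that $A \subseteq I$.

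Third, I would show $I$ is not a finite union of ideals. Suppose $I = \bigcup_{i<k} C_i$ with each $C_i$ an ideal; by Lemma \ref{essential} take the union essential. Since $A \subseteq I$ is an infinite antichain and $k$ is finite, by the pigeonhole principle (this instance is available in \RCA\ since $k$ is a fixed standard number — actually one uses that an infinite set cannot be covered by $k$ sets each meeting $A$ in a bounded way; more carefully: if each $C_i \cap A$ were finite then $A$ would be finite, contradiction) some $C_i$ contains two distinct elements $a, a'$ of $A$. But $C_i$ is an ideal, so $a$ and $a'$ are compatible \emph{in} $C_i \subseteq I$, say $a, a' \preceq z$ with $z \in I$. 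Now $z \in I$ means $z \preceq a_m$ for some $m$, hence $a \preceq a_m$ and $a' \preceq a_m$ in $P$; since $a, a' \in A$ and $A$ is an antichain, $a = a_m = a'$, contradicting $a \neq a'$. (The extraction of two distinct elements of $A$ from one $C_i$ is the point where I would be most careful: one wants to avoid needing a pigeonhole principle unavailable in \RCA, but covering an infinite set by finitely many sets forces one of them to be infinite, and this is provable in \RCA\ by $\Sigma^0_1$-induction on $k$.) This contradiction completes the proof.

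The main obstacle I anticipate is the second step: verifying that the $\Sigma^0_1$-looking set $I$ genuinely exists in \RCA. The whole point of the lemma is that maximality of the antichain collapses the apparent $\Sigma^0_1$ definition to a $\Delta^0_1$ one, and getting this collapse right — in particular checking that the "comparable from above" case is decidable because $A$ is infinite — is the delicate part. Everything else (that $I$ is an initial interval, that an ideal cannot contain two elements of an antichain that are compatible within $I$) is routine.
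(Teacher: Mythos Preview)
Your approach is correct and essentially identical to the paper's: both form the downward closure $I$ of the maximal antichain $A$, use maximality to show $I$ has the $\Delta^0_1$ definition $x \in I \iff \neg(\exists a \in A)\, a \prec x$, and then argue that distinct elements of $A$ are incompatible in $I$ so no ideal contained in $I$ can hold two of them. Your third step is more elaborate than needed---the paper dispenses with Lemma~\ref{essential} and any pigeonhole discussion by the one-line observation just stated---and the clause ``since $A$ is infinite'' in your mutual-exclusivity argument is a red herring (only the antichain property is used there), but these are expository rather than mathematical issues.
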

\begin{proof}
Let $D$ be a maximal infinite antichain of $P$. The maximality of $D$ implies
that for all $x\in P$ we have
\[ (\exists d \in D) x \preceq d \iff \neg( \exists d \in D) d\prec x.\]
Therefore the downward closure of $D$ is \DE01 definable and thus exists in
\RCA. Letting $I = \{x \in P \colon (\exists d\in D) x \preceq d\}$ we obtain
an initial interval which is not a finite union of ideals, since distinct
elements of $D$ are incompatible in $I$.
\end{proof}

To use Lemma \ref{Bonnet1<-max} in the general case we need to extend an
infinite antichain to a maximal one. It is easy to show that \RCA\ proves the
existence of maximal antichains in any partial order. On the other hand, we
now show in \RCA\ that the statement that every antichain is contained in a
maximal antichain is equivalent to \ACA.

\begin{lemma}
Over \RCA, the following are equivalent:
\begin{enumerate}
 \item \ACA;
 \item every antichain in a partial order extends to a maximal
antichain.
\end{enumerate}
\end{lemma}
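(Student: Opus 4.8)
The plan is to prove the two implications separately, with the forward direction being routine and the reversal requiring a coding construction analogous to the one used for strong antichains in Lemma \ref{strong antichain existence}.

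For (1) $\implies$ (2) I would argue in \ACA. Given a partial order $P$ and an antichain $A \subseteq P$, define by recursion a function $f \colon \N \to \{0,1\}$ by setting $f(x) = 1$ if and only if $x \notin \Down(\{y < x \colon f(y) = 1\} \cup \{x\}) \cup \Up(\dots)$ extended appropriately — more precisely, $f(x) = 1$ iff $A \cup \{y < x \colon f(y) = 1\} \cup \{x\}$ is an antichain in $P$. The recursion is legitimate in \ACA\ since ``being an antichain'' is arithmetical in the relevant finite data together with $P$ and $A$ (which are sets); in fact only \ACA-recursion is needed, not full arithmetical comprehension. Then $M = \{x \colon f(x) = 1\}$ is a maximal antichain extending $A$: it is an antichain by construction, and if some $z \in P$ were incomparable with every element of $M$, then in particular $f(z) = 1$, so $z \in M$, a contradiction.

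For (2) $\implies$ (1) I would mimic the reversal in Lemma \ref{strong antichain existence}. Working in \RCA, fix a one-to-one $f \colon \N \to \N$; it suffices to produce the range of $f$ by \SI00\ comprehension from a maximal antichain in a suitable partial order. The natural candidate is essentially the same $P = \{a_n, b_n, c_n \colon n \in \N\}$ used before: put $a_n \preceq c_m$ iff $b_n \preceq c_m$ iff $f(m) = n$, and add no other comparabilities. Here $a_n$ and $b_n$ are incomparable with each other (no common relation is added) and with everything except the $c_m$ above them, so an antichain can contain both $a_n$ and $b_n$ precisely when $n$ is \emph{not} in the range of $f$; if $n = f(m)$, then any $c_{m'}$ with $m' \neq m$ is incomparable with $a_n$ and $b_n$, so maximality still forces at least one of $a_n, b_n$ out. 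Thus for a maximal antichain $S$ we get $n \in \operatorname{ran}(f) \iff a_n \notin S \lor b_n \notin S$, giving the range by \SI00\ comprehension. One must double-check the symmetric case $n \notin \operatorname{ran}(f)$: then nothing lies above $a_n$ or $b_n$, so the empty constraint is satisfiable and maximality does not eject them — but care is needed because a maximal antichain might still happen to omit one of them for unrelated reasons, so I would instead use the cleaner equivalence by arranging that the only reason to omit $a_n$ or $b_n$ is the presence of a common upper bound $c_m$; since $a_n$ and $b_n$ have a common upper bound iff $n \in \operatorname{ran}(f)$, and any maximal antichain is forced to include everything not in conflict, the biconditional holds.

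The main obstacle is getting the coding partial order exactly right so that the biconditional ``$n \in \operatorname{ran}(f)$ iff ($a_n$ or $b_n$ omitted from $S$)'' is forced by \emph{every} maximal antichain, not just some canonical one — in particular ensuring the right-to-left direction, i.e.\ that when $n \notin \operatorname{ran}(f)$ both $a_n$ and $b_n$ \emph{must} lie in $S$. This requires that $a_n$ and $b_n$ be incomparable and incompatible with every other generator when $n \notin \operatorname{ran}(f)$, which the construction above arranges since the only comparabilities involve the $c_m$'s sitting above $a_{f(m)}$ and $b_{f(m)}$. Verifying transitivity and that this is indeed a partial order is immediate since every added relation has the form (generator $\preceq$ a $c_m$) with no chains of length three.
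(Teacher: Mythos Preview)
Your forward direction matches the paper's and is fine.

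The reversal, however, has a genuine gap. Your claimed biconditional ``$n \in \operatorname{ran}(f) \iff a_n \notin S \lor b_n \notin S$'' fails for an arbitrary maximal antichain $S$ in your partial order. Concretely, take $S = \{a_n, b_n : n \in \N\}$. All the $a_n$'s and $b_n$'s are pairwise incomparable (the only comparabilities in your $P$ involve some $c_m$), so $S$ is an antichain; and every $c_m$ satisfies $a_{f(m)} \prec c_m$ with $a_{f(m)} \in S$, so $S$ is maximal. Yet for every $n$, including those in the range of $f$, both $a_n$ and $b_n$ lie in $S$. The mistake is the sentence ``maximality still forces at least one of $a_n, b_n$ out'': that reasoning is valid for \emph{strong} antichains (where a common upper bound makes two elements compatible and hence not both admissible), but for ordinary antichains a common upper bound is irrelevant --- only comparability matters, and $a_n \perp b_n$ always.

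There is also a more basic issue: you never apply statement~(2). You speak of ``a maximal antichain $S$'' without specifying a starting antichain to extend, but the mere \emph{existence} of maximal antichains is already provable in \RCA, as the paper notes. The strength of~(2) lies in extending a \emph{prescribed} antichain, and that is what you must exploit. In your partial order, applying~(2) to $D = \{c_m : m \in \N\}$ would work: any maximal $E \supseteq D$ must exclude $a_n$ whenever $n = f(m)$ (since $a_n \prec c_m \in E$) and include $a_n$ whenever $n \notin \operatorname{ran}(f)$ (since then $a_n$ is isolated). The paper uses an even leaner construction: $P = \{a_n, b_m : n,m \in \N\}$ with $b_m \preceq a_n$ iff $f(m)=n$, and applies~(2) to $D = \{b_m : m \in \N\}$; then $n \in \operatorname{ran}(f)$ iff $a_n \notin E$, for the same reason.
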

\begin{proof}
We first show (1) $\implies$ (2). Let $P$ be a partial order and $D \subseteq
P$ be an antichain. By recursion we define $f \colon \N \to \{0,1\}$ by
letting $f(x)=1$ if and only if $D \cup \{y<x \colon f(y)=1\} \cup \{x\}$ is
an antichain in $P$. Then $E = \{x \colon f(x)=1\}$ is a maximal antichain
with $D \subseteq E$.

For the reversal argue in \RCA\ and fix a one-to-one function $f \colon \N
\to \N$. Let $P = \{a_n, b_n \colon n \in \N\}$ and define the partial order
by letting $b_m \preceq a_n$ if and only if $f(m) = n$, and adding no other
comparabilities. Then apply (2) to the antichain $D= \{b_m \colon m \in \N\}$
and obtain a maximal antichain $E$ such that $D \subseteq E$. It is immediate
that $(\exists m) f(m)=n$ if and only if $a_n \notin E$, so that in \RCA\ we
can prove the existence of the range of $f$.
\end{proof}

We now show how to prove the right to left direction of Theorem \ref{Bonnet1}
in \WKL.

\begin{theorem}[\WKL]\label{Bonnet1<-WKL}
Every partial order with an infinite antichain contains an initial interval
that is not a finite union of ideals.
\end{theorem}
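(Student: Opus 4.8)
The plan is to reduce the statement to the special case already handled in Lemma~\ref{Bonnet1<-max} by using \WKL\ to extend a given infinite antichain $D$ of $P$ to a maximal one. Since the previous Lemma shows that extending antichains to maximal antichains in full generality is equivalent to \ACA, we cannot hope to do this outright; instead I would exploit the extra freedom we have here, namely that we only need \emph{some} initial interval which is not a finite union of ideals, not the downward closure of a \emph{prescribed} maximal antichain. The idea is to build, via weak K\"onig's lemma, an initial interval $I$ together with an infinite antichain contained in $I$ which is maximal \emph{as an antichain of $I$} (equivalently, such that every element of $I$ is comparable with some element of the antichain). Distinct elements of such an antichain are incompatible in $I$, so $I$ is not a finite union of ideals.

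Concretely, I would work with the tree $T(P)$ of finite approximations to initial intervals and enrich it to a binary tree $T$ whose nodes $\sigma$ code, in addition to (an initial segment of the characteristic function of) an initial interval $I$, a guess at which of the first $|\sigma|$ elements of $D$ lie in $I$ and at a finite antichain inside $I$. The key constraints to impose on $\sigma\in T$ are: $\sigma\in T(P)$; whenever $\sigma$ decides $x<|\sigma|$ to be in $I$, then $x$ is comparable (in $P$) to one of the chosen antichain elements; and the chosen antichain elements form an antichain all of whose members are among the $d_n\in D$ with $\sigma(d_n)=1$. The crucial point for infinitude of $T$ is that we can always continue: given a finite approximation, the next element $x$ of $P$ either is forced out of $I$ (put $\sigma(x)=0$), or, if we decide to put it in, it must be made comparable to an antichain element already chosen, or else we enlarge the antichain by some $d\in D$ above or below $x$ — and here we use that $D$ is infinite and pairwise incomparable, so cofinally many elements of $D$ are available and distinct from everything mentioned so far. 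A careful bookkeeping shows that for every length $k$ there is a valid $\sigma\in T$ of length $k$, so $T$ is infinite; applying \WKL\ yields a path, which determines an initial interval $I$ and an infinite antichain $D'\subseteq I\cap D$ that is maximal in $I$, whence $I$ is not a finite union of ideals.

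I expect the main obstacle to be arranging the branching/bookkeeping so that the tree is genuinely infinite \emph{provably in \WKL}, i.e.\ with only $\Sigma^0_1$-induction available. The delicate case is an element $x$ that we wish to place in $I$ but which happens to be incomparable with all antichain elements chosen so far and also incomparable with all of $D$ seen so far; then we must defer $x$ (set $\sigma(x)=0$ at this stage) and hope to revisit it, or alternatively argue that we never actually need to place such an $x$ in $I$. The cleanest fix is probably to build $I$ as the downward closure of the antichain we are simultaneously constructing, so that membership of $x$ in $I$ is \emph{caused} by an antichain element above $x$; then the only real choice at stage $x$ is whether to throw in a new antichain element $d\in D$ with $x\preceq d$ (available precisely when such a $d$ exists, and then cofinally many do) or to leave $x$ out. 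This keeps the "maximal-in-$I$" condition automatic on the final path and makes the infinitude of $T$ a routine $\Sigma^0_0$ verification. Once the path is obtained, the conclusion follows exactly as in Lemma~\ref{Bonnet1<-max}.
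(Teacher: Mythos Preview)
Your first plan rests on a false step: a maximal antichain in an initial interval $I$ need \emph{not} consist of pairwise $I$-incompatible elements. Take $I=\{a,b,c\}$ with $a\perp b$ and $a,b\prec c$; then $\{a,b\}$ is a maximal antichain in $I$ but $a$ and $b$ are compatible in $I$. So ``maximal in $I$'' alone does not give the conclusion. Your ``cleanest fix'' (force $I=\Down D'$) would indeed yield incompatibility, but the tree you sketch has two real problems. First, the natural constraint ``$\sigma(x)=1\Rightarrow(\exists d\in D,\,d<|\sigma|)\,x\preceq d$'' is not closed under initial segments (a witness $d$ with $m\le d<|\sigma|$ is lost when you restrict to length $m$), so what you describe is not a tree. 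Second, even if you repair that, nothing in your description forces $D'$ to be infinite along a path: the branch that always ``leaves $x$ out'' yields $D'=\emptyset$ and $I=\emptyset$, and \WKL\ only hands you \emph{some} path. You would have to add a growth condition on $D'$, and doing this while keeping the set of nodes downward closed and nonempty at every length is exactly the difficulty you are trying to finesse; it is not a ``routine $\Sigma^0_0$ verification''.

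The paper's argument avoids all of this by dropping both maximality and the demand $I=\Down D'$. One simply applies $\Sigma^0_1$ initial interval separation (Lemma~\ref{iis}) with $\varphi(x)\equiv x\in D$ and $\psi(x)\equiv(\exists y\in D)\,y\prec x$; the hypothesis $(\forall x,y)(\varphi(x)\wedge\psi(y)\to y\npreceq x)$ holds because $D$ is an antichain. The resulting initial interval $I$ contains $D$ and contains nothing strictly above an element of $D$; hence distinct elements of $D$ are already incompatible \emph{in $I$}, and $I$ cannot be a finite union of ideals. No maximal antichain, no simultaneous construction of $D'$, and the tree used inside Lemma~\ref{iis} is manifestly downward closed. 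If you want to salvage your approach, the right move is to replace your constraint (c) by the monotone condition ``$\sigma(x)=1\Rightarrow\neg(\exists d\in D,\,d<|\sigma|)\,d\prec x$'' and force $D\subseteq I$; but then you have reproduced the paper's proof.
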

\begin{proof}
Let $P$ be a partial order containing an infinite antichain $D$. Let
$\varphi(x)$ and $\psi(x)$ be the \SI01 formulas $x \in D$ and $(\exists y)
(y \in D \land y \prec x)$ respectively. It is obvious that $(\forall x,y \in
P) (\varphi(x) \land \psi(y) \implies y \npreceq x)$. By \SI01 initial
interval separation (Lemma \ref{iis}), there exists an initial interval $I
\subseteq P$ such that
\[(\forall x \in P) ((\varphi(x) \implies x \in I) \land (\psi(x) \implies x
\notin I)).\]
Therefore, $I$ contains $D$ and no element above any element of $D$. To see
that $I$ cannot be the union of finitely many ideals notice that distinct
$x,x' \in D$ cannot belong to the same ideal $A \subseteq I$, for otherwise
there would be $z \in I$ such that $x, x' \preceq z$, which implies
$\psi(z)$.
\end{proof}

We do not know whether the statement of Theorem \ref{Bonnet1<-WKL} implies
\WKL. Notice however that the proof above uses the existence of an initial
interval $I$ containing the infinite antichain $D$ and no elements above any
element of $D$. We now show that even the existence of an initial interval
$I$ containing infinitely many elements of the antichain $D$ and no elements
above any element of $D$ is equivalent to \WKL. Therefore a proof of the
right to left direction of Theorem \ref{Bonnet1} in a system weaker than
\WKL\ must avoid using such an $I$.

\begin{lemma}\label{WKLneeded}
Over \RCA, the following are equivalent:
\begin{enumerate}
 \item \WKL;
 \item if a partial order $P$ contains an infinite antichain $D$, then $P$ has
an initial interval $I$ such that $D\subseteq I$ and $(\forall x\in D)(\forall
y\in I)x\nprec y$;
\item if a partial order $P$ contains an infinite antichain $D$, then $P$ has
an initial interval $I$ such that $I\cap D$ is infinite and $(\forall x\in
D)(\forall y\in I)x\nprec y$.
\end{enumerate}
\end{lemma}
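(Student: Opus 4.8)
The plan is to prove $(1)\Rightarrow(2)\Rightarrow(3)\Rightarrow(1)$.

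For $(1)\Rightarrow(2)$ I would essentially repeat the argument of Theorem~\ref{Bonnet1<-WKL}. Given an infinite antichain $D$ in $P$, apply \SI01 initial interval separation (item (2) of Lemma~\ref{iis}, available under \WKL) to the formulas $\varphi(x)\equiv x\in D$ and $\psi(x)\equiv(\exists y)(y\in D\land y\prec x)$. The hypothesis of the separation principle holds: if $\varphi(x)$ and $\psi(y)$ then $x\in D$ and $z\prec y$ for some $z\in D$, and $y\preceq x$ would give $z\prec x$ with $x,z\in D$, contradicting that $D$ is an antichain. The resulting initial interval $I$ then contains every element of $D$ and no element strictly above an element of $D$, which is exactly (2). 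For $(2)\Rightarrow(3)$ simply note that $D\subseteq I$ forces $I\cap D=D$, which is infinite, so the $I$ produced by (2) already witnesses (3).

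For $(3)\Rightarrow(1)$ I would derive \WKL by establishing \SI01 separation; by Lemma~\ref{iis} together with \cite[Lemma IV.4.4]{sosoa} it suffices, working in \RCA, to separate the disjoint ranges of two one-to-one functions $f,g\colon\N\to\N$. The idea is to build, using only bounded quantifiers over $f$ and $g$, a partial order $P$ carrying a distinguished infinite antichain $D$ together with ``detector'' elements $c_k$ ($k\in\N$), arranged so that $c_k$ lies below cofinally many elements of $D$ whenever $k$ is in the range of $f$, and $c_k$ lies above some element of $D$ whenever $k$ is in the range of $g$. Applying (3) to $P$ produces an initial interval $I$ with $I\cap D$ infinite and containing no element strictly above an element of $D$. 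The first clause forces $c_k\in I$ for every $k$ in the range of $f$, because $I\cap D$, being infinite, is cofinal in any enumeration of $D$ and hence meets the set of elements of $D$ above $c_k$; the second clause forces $c_k\notin I$ for every $k$ in the range of $g$. Consequently $X=\{k\colon c_k\in I\}$, a set by \SI00 comprehension, separates the range of $f$ from the range of $g$.

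The delicate point, which I expect to be the main obstacle, is to design $P$ so that two competing requirements are met at once. First, the comparability relation must be a set, i.e.\ $\boldsymbol\Delta^0_1$: a naive encoding forces, through transitivity, comparabilities $c_k\prec c_{k'}$ precisely when $k$ is in the range of $f$ and $k'$ is in the range of $g$, and this relation is only \SI01. Second, $P$ must admit no ``degenerate'' witness $I$ — for instance one consisting only of minimal antichain elements — that encodes nothing about the ranges. I would resolve the tension by splitting the antichain into several families and inserting stage-indexed auxiliary elements so that every comparability surviving transitivity only compares ``stages'' and is therefore bounded, while using a multiplicity (padding) device in the spirit of \cite[Lemma 4.2]{MarSho11} and of the proof of Theorem~\ref{equivACA} to guarantee that any witness $I$ is forced to meet the useful antichain elements cofinally. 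Checking that the resulting structure really is a partial order, that $D$ is a genuine infinite antichain, and that the forcing behaviour is as described is the technical heart of the argument.
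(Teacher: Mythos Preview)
Your treatment of $(1)\Rightarrow(2)$ and $(2)\Rightarrow(3)$ is exactly the paper's, and for $(3)\Rightarrow(1)$ your high-level plan---reduce to separating the disjoint ranges of two one-to-one functions, build a computable partial order with a distinguished infinite antichain $D$ and detector elements that fall below cofinally many members of $D$ when the index is in $\operatorname{ran}(f)$ and above a member of $D$ when the index is in $\operatorname{ran}(g)$, then read off the separating set from the initial interval $I$---is also the paper's plan.

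What is missing is the actual construction, and the direction you sketch for it (splitting the antichain into families, auxiliary stage elements, padding in the style of \cite[Lemma 4.2]{MarSho11}) is more elaborate than what the paper needs. The paper uses a single antichain $D=\{a_n:n\in\N\}$ and a single family of detectors $\{b_k:k\in\N\}$, with no padding. The two ideas that make this work are: (a) declare $b_k\preceq a_n$ only when $k$ has entered the range of $f$ by stage $\min\{n,g(n)\}$, i.e.\ $(\exists i<n)(i<g(n)\land f(i)=k)$, and (b) explicitly close under the transitivity you are worried about by adding $b_k\preceq b_m$ whenever $k$ enters $\operatorname{ran}(f)$ before stage $m$ and before $m$ itself has entered $\operatorname{ran}(f)$. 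The extra bound $i<g(n)$ in (a) is the crucial trick: the dangerous transitivity chain $b_k\preceq a_n\preceq b_{g(n)}$ now only produces $b_k\preceq b_{g(n)}$ with the bounded witness $i<g(n)$, so all relations are $\boldsymbol\Delta^0_1$. One then checks transitivity directly (three short cases), and verifies that if $k=f(i)$ then $b_k\preceq a_n$ for all $n$ larger than $\max(\{i\}\cup\{m:g(m)\le i\})$, which is finite since $g$ is one-to-one; this gives the required cofinal behaviour. So your diagnosis of the obstacle is accurate, but the remedy is a well-chosen bound rather than structural padding.
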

\begin{proof}
The proof of (1) $\implies$ (2) is contained in Theorem \ref{Bonnet1<-WKL}
and (2) $\implies$ (3) is obvious, so that we just need to show (3)
$\implies$ (1). Fix one-to-one functions $f,g \colon \N \to \N$ such that
$(\forall n,m \in \N) f(n) \neq g(m)$. Let $P = \{a_n, b_n \colon n \in \N\}$
the partial order defined by letting
\begin{enumerate}[\quad(i)]
\item $a_n \preceq b_m$ if $m=g(n)$;
\item $b_k \preceq a_n $ if $(\exists i<n)(i<g(n) \land f(i)=k)$, i.e.\ $k$
    enters the range of $f$ before stage $\min\{n,g(n)\}$;
\item $b_k \preceq b_m$ if $(\exists i<m) (f(i)=k \land (\forall j<i) f(j)
    \neq m)$, i.e.\ $k$ enters the range of $f$ before stage $m$ and when
    $m$ has not entered the range of $f$ yet,
\end{enumerate}
and adding no other comparabilities.

To check that $P$ is indeed a partial order we need to show that it is
transitive. The main cases are the following:
\begin{itemize}
  \item If $b_k \preceq a_n \preceq b_m$ we have $m=g(n)$ and the existence
      of $i<\min\{n,m\}$ such that $f(i)=k$. By the hypothesis on $f$ and
      $g$ we have $f(j) \neq m$ for every $j$, and in particular for every
      $j<i$, so that $b_k \preceq b_m$ follows.
  \item If $b_k \preceq b_m \preceq b_\ell$ there exist $i<m$ and $i'<\ell$
      such that $f(i)=k$, $(\forall j<i) f(j) \neq m$, $f(i')=m$, and
      $(\forall j<i') f(j) \neq \ell$. The second and third condition imply
      $i \leq i'$, so that $i<\ell$, $(\forall j<i) f(j) \neq \ell$ and we
      obtain $b_k \preceq b_\ell$.
  \item If $b_k \preceq b_m \preceq a_n$  there exist $i<m$ and $i'<n$ such
      that $f(i)=k$, $(\forall j<i) f(j) \neq m$, $i'<g(n)$, and $f(i')=m$.
      Again we obtain $i \leq i'$, so that $i< \min\{n,g(n)\}$ and we can
      conclude $b_k \preceq a_n$.
\end{itemize}

The set $D = \{a_n \colon n\in\N\}$ is an infinite antichain. Applying (3) we
obtain an initial interval $I$ of $P$ which contains infinitely many elements
of $D$ and no elements above any element of $D$. We now check that $\{k\in\N
\colon b_k\in I\}$ separates the range of $f$ from the range of $g$.

If $k=g(n)$ it is immediate that $a_n \prec b_k$ so that $b_k \notin I$.

On the other hand suppose that $k=f(i)$. The set $A= \{n \colon g(n) \leq
i\}$ is finite by the injectivity of $g$ and we can let $m = \max (\{i\} \cup
A)$. Since $D \cap I$ is infinite there exists $n>m$ such that $a_n \in I$.
Then we have $i<n$ and $i<g(n)$ (because $n \notin A$), so that $b_k \preceq
a_n$. Therefore $b_k \in I$.
\end{proof}

We notice that another weakening of statement (2) of Lemma \ref{WKLneeded}
which is equivalent to \WKL\ is the following: \lq\lq if a partial order $P$
contains an infinite antichain $D$, then there exists an initial interval $I$
such that $D \subseteq I$ and $(\forall y \in I)(\exists^\infty x \in D)x
\nprec y$\rq\rq\ (the proof of the reversal uses the partial order of the
proof above equipped with the inverse order). However this statement does not
imply the statement of Theorem \ref{Bonnet1<-WKL}.\smallskip

Our next goal is to show that \WKL\ suffices to prove the half of the right
to left direction of Theorem \ref{Bonnet3} that was not proved in \RCA\ in
Theorem \ref{Bonnet3<-RCA}. In other words, we study the statement that every
partial order with countably many initial intervals has no infinite
antichains. Understanding initial intervals of partial orders with an
infinite antichain leads to study the relationship between initial intervals
of partial orders contained one into the other.

\begin{lemma}\label{embedii}
Over \RCA, the following are equivalent:
\begin{enumerate}
\item \WKL;
\item Let $Q$ and $P$ be partial orders and $f$ be an embedding of $Q$ into
    $P$. Then
\[ \Int(Q) = \{f^{-1}(J) \colon J \in \Int(P)\};  \]
\item Let $Q$ be a subset of a partial order $P$. Then $\Int(Q) = \{J \cap
    Q \colon J \in \Int(P)\}$.
\end{enumerate}
\end{lemma}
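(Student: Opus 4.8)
The plan is to establish $(1) \Rightarrow (2) \Rightarrow (3) \Rightarrow (1)$. The implication $(2) \Rightarrow (3)$ is immediate: if $Q$ is a subset of $P$, the identity function is an embedding of $Q$ into $P$, and $f^{-1}(J) = J \cap Q$ for every $J \in \Int(P)$.

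For $(1) \Rightarrow (2)$, fix an embedding $f \colon Q \to P$. That $f^{-1}(J) \in \Int(Q)$ for every $J \in \Int(P)$ is provable in \RCA, since $f^{-1}(J)$ exists by \SI00 comprehension and is downward closed in $Q$ because $f$ is an embedding. For the reverse inclusion, given $I \in \Int(Q)$ I would apply \SI01 initial interval separation (Lemma \ref{iis}) inside $P$ to the \SI01 formulas
\[ \varphi(z) \equiv (\exists x \in I)(f(x)=z) \quad\text{and}\quad \psi(z) \equiv (\exists y \in Q \setminus I)(f(y)=z); \]
here $Q \setminus I$ is a set, so these formulas are legitimately \SI01. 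Their separation hypothesis holds: if $z = f(x)$ with $x \in I$, $w = f(y)$ with $y \in Q \setminus I$, and $w \preceq_P z$, then $y \preceq_Q x$ because $f$ is an embedding, so $y \in I$, a contradiction. The resulting initial interval $J$ of $P$ satisfies $f^{-1}(J)=I$: if $x \in I$ then $\varphi(f(x))$ gives $f(x) \in J$; and if $x \in Q$ with $f(x) \in J$ but $x \notin I$, then $\psi(f(x))$ forces $f(x) \notin J$, a contradiction.

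For $(3) \Rightarrow (1)$, I would derive \SI01 separation of the disjoint ranges of two one-to-one functions, which suffices for \WKL\ over \RCA\ by \cite[Lemma IV.4.4]{sosoa}. Given one-to-one $f, g \colon \N \to \N$ with $f(n) \neq g(m)$ for all $n, m$, take the same partial order $P = \{a_n, b_n, c_n \colon n \in \N\}$ used in the proof of Lemma \ref{iis}: $c_n \preceq a_m \iff f(m)=n$, $b_m \preceq c_n \iff g(m)=n$, and no other comparabilities. Let $Q = \{a_n, b_n \colon n \in \N\}$; as in that proof $Q$ is an antichain, so $A = \{a_n \colon n \in \N\}$ is an initial interval of $Q$. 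By (3) there is $J \in \Int(P)$ with $J \cap Q = A$, hence $a_n \in J$ and $b_n \notin J$ for all $n$. Then $\{n \colon c_n \in J\}$ exists by \SI00 comprehension and separates the two ranges: if $n = f(m)$ then $c_n \preceq a_m \in J$, so $c_n \in J$; if $n = g(m)$ then $b_m \preceq c_n$, so $c_n \in J$ would force $b_m \in J$, which is false.

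Since the whole argument is essentially a repackaging of Lemma \ref{iis}, I do not expect a real obstacle; the only points needing a little care are checking that $\varphi$ and $\psi$ in $(1)\Rightarrow(2)$ are genuinely \SI01 (which holds because $I$, $Q\setminus I$ and $f$ are sets) and the transitivity of the partial order reused in $(3)\Rightarrow(1)$, which is vacuous because the ranges of $f$ and $g$ are disjoint.
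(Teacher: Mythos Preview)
Your proof is correct and matches the paper's approach for $(1)\Rightarrow(2)$ and $(2)\Rightarrow(3)$. For $(3)\Rightarrow(1)$ the paper takes a marginally cleaner route: rather than reusing the specific partial order from Lemma~\ref{iis} to derive range separation, it observes that (3) immediately yields initial interval separation (statement~(3) of Lemma~\ref{iis}) by taking $Q=A\cup B$ and noting $A\in\Int(Q)$---but the content is the same.
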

\begin{proof}
We start with (1) $\implies$ (2). Let $f \colon Q \to P$ be an embedding. It
is easy to check that if $J \in \Int(P)$ then $f^{-1}(J) \in \Int(Q)$, so
that the right to left inclusion is established even in \RCA.

For the other inclusion fix $I \in \Int(Q)$. Let $\varphi(x)$ and $\psi(x)$
be the \SI01 formulas $(\exists y \in Q) (y \in I \land x=f(y))$ and
$(\exists y \in Q) (y \notin I \land x=f(y))$ respectively. Since $f$ is an
embedding and $I$ is an initial interval, we have
\[  (\forall x,y \in P) (\varphi(x) \land \psi(y) \implies y \npreceq_P x). \]
Apply \SI01 initial interval separation (Lemma \ref{iis}) to get $J \in
\Int(P)$ such that $f(I) \subseteq J$ and $J \cap f(Q \setminus I) =
\emptyset$. It is immediate that $I = f^{-1}(J)$.\smallskip

Since the implication (2) $\implies$ (3) is obvious, it remains to show (3)
$\implies$ (1).

Instead of \WKL, we prove statement (3) of Lemma \ref{iis}, i.e.\ initial
interval separation. Let $P$ be a partial order and $A, B \subseteq P$ such
that $(\forall x \in A) (\forall y \in B) y \npreceq x$. Let $Q = A \cup B
\subseteq P$ and notice that $A \in \Int(Q)$. By (3) there exists $J \in
\Int(P)$ such that $A = J \cap Q$. It is easy to see that $A \subseteq J$ and
$J \cap B = \emptyset$, completing the proof.
\end{proof}

Notice that the obvious proof of the nontrivial direction of (2), namely
given $I \in \Int(Q)$ let $J$ be the downward closure of $f(I)$, uses
arithmetical comprehension.

\begin{corollary}[\WKL]\label{embedding}
Let $P$ and $Q$ be partial orders such that $Q$ embeds into $P$. If $P$ has
countably many initial intervals, then $Q$ does.
\end{corollary}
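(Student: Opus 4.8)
The plan is to derive this immediately from Lemma \ref{embedii}. We work in \WKL, fix an embedding $f \colon Q \to P$, and let $\{J_n \colon n \in \N\}$ be a sequence witnessing that $P$ has countably many initial intervals, i.e.\ every $J \in \Int(P)$ equals $J_n$ for some $n$.

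First I would form the sequence $\{K_n \colon n \in \N\}$ with $K_n = f^{-1}(J_n) = \{x \in Q \colon f(x) \in J_n\}$. Since $f$ is (the graph of) a total function on $Q$ and the sequence $\{J_n \colon n \in \N\}$ is coded by a single set, the relation ``$x \in K_n$'' is $\DE01$ in these parameters, so the set coding $\{K_n \colon n \in \N\}$ exists already in \RCA.

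Then I would invoke Lemma \ref{embedii}: by clause (2) of that lemma, $\Int(Q) = \{f^{-1}(J) \colon J \in \Int(P)\}$. Hence, given any $I \in \Int(Q)$, there is $J \in \Int(P)$ with $I = f^{-1}(J)$; choosing $n$ with $J = J_n$ yields $I = K_n$. Thus $\{K_n \colon n \in \N\}$ witnesses that $Q$ has countably many initial intervals.

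I do not expect a genuine obstacle here: the only substantial input is the inclusion $\Int(Q) \subseteq \{f^{-1}(J) \colon J \in \Int(P)\}$ supplied by Lemma \ref{embedii}, and that is precisely where \WKL\ enters (via \SI01 initial interval separation). Note in particular that we need not verify that the $K_n$ are themselves initial intervals of $Q$, since having countably many initial intervals only requires a covering sequence; the naive route of letting $J$ be the downward closure of $f(I)$, which would sidestep Lemma \ref{embedii}, is exactly the one that costs \ACA.
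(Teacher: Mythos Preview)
Your proof is correct and follows essentially the same approach as the paper: fix an embedding $f$, form the sequence $K_n = f^{-1}(J_n)$ (which exists in \RCA), and use Lemma~\ref{embedii} to conclude that every $I \in \Int(Q)$ appears among the $K_n$. Your additional remarks about where \WKL\ enters and why the downward-closure route costs \ACA\ are accurate and match the paper's commentary after Lemma~\ref{embedii}.
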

\begin{proof}
Fix an embedding $f \colon Q \to P$. Let $\{J_n \colon n \in \N\}$ be such
that for all $J \in \Int(P)$ there exists $n$ with $J=J_n$. For every $n$ let
$I_n = f^{-1} (J_n)$, which exists in \RCA. Then, by Lemma \ref{embedii}, for
all $I \in \Int(Q)$ there exists $n$ with $I=I_n$, showing that $Q$ has
countably many initial intervals.
\end{proof}

We can now prove in \WKL\ the part of the right to left direction of Theorem
\ref{Bonnet3} we are interested in.

\begin{theorem}[\WKL]\label{Bonnet3<-WKL}
Every partial order with countably many initial intervals has no infinite
antichains.
\end{theorem}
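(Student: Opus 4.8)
I would prove the contrapositive: if $P$ has an infinite antichain, then $P$ has uncountably many initial intervals. The point is that an infinite antichain, sitting inside $P$, is a "hard" substructure whose initial intervals are hard to enumerate, and — crucially — by the work already done, having countably many initial intervals passes from $P$ down to subsets (via embeddings). So the whole argument reduces to combining Lemma~\ref{ACpm} with Corollary~\ref{embedding}.

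\textbf{The steps.} First, fix an infinite antichain $D \subseteq P$, and regard $D$ as a partial order in its own right: since $D$ is an antichain, the restriction of $\preceq_P$ to $D$ is just the identity, so $D$ is literally an infinite antichain as a partial order, and the inclusion map $D \hookrightarrow P$ is an embedding. Second, invoke Lemma~\ref{ACpm} to conclude that $D$ has perfectly many initial intervals, and then the diagonal argument recorded after Lemma~\ref{T(P)} (that a nonempty perfect tree has uncountably many paths, provably in \RCA) to conclude that $D$ has \emph{uncountably} many initial intervals. Third, assume toward a contradiction that $P$ has countably many initial intervals; then Corollary~\ref{embedding}, applied to the embedding $D \hookrightarrow P$, yields that $D$ has countably many initial intervals, contradicting the previous step. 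Hence $P$ has uncountably many initial intervals, which is what we wanted.

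\textbf{Where the weight sits.} There is no real obstacle left in this proof itself — it is a three-line deduction from results already in hand. All the genuine content, and the only use of \WKL, is hidden inside Corollary~\ref{embedding}, which rests on Lemma~\ref{embedii} and hence on \SI01\ initial interval separation (Lemma~\ref{iis}); in \ACA\ one would instead simply take downward closures of preimages, but \WKL\ suffices precisely because separation of the two \SI01\ sets "$x$ is the image of an element of $I$" and "$x$ is the image of an element of $Q\setminus I$" can be done by an initial interval. So the thing to emphasize is that the present theorem is a corollary of the embedding transfer principle, and that no further ingredient beyond \WKL\ is needed.
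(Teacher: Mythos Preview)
Your proposal is correct and is essentially the paper's own proof: the paper's argument is the single line ``Immediate from Lemma~\ref{ACpm} and Corollary~\ref{embedding}'', and you have simply unpacked that line (passing through perfectly many to uncountably many initial intervals of $D$, then invoking the embedding transfer to reach a contradiction). Your commentary on where \WKL\ is actually used is accurate and matches the dependency chain in the paper.
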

\begin{proof}
Immediate from Lemma \ref{ACpm} and Corollary \ref{embedding}.
\end{proof}

\subsection{Unprovability in \RCA}\label{section:REC}

In this subsection we show that \RCA\ does not suffice to prove the
statements shown in Theorems \ref{Bonnet1<-WKL} and \ref{Bonnet3<-WKL} to be
provable in \WKL.

A single construction actually works for both statements.

\begin{lemma}\label{REC}
There exists a computable partial order $P$ with an infinite computable
antichain such that any computable initial interval of $P$ is the downward
closure of a finite subset of $P$.
\end{lemma}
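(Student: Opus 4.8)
The plan is to build $P$ as a computable partial order whose non-principal initial intervals are forced to encode noncomputable information, so that only the ``trivial'' initial intervals — the downward closures of finite sets — can be computable. Concretely, I would start from a computable set $A$ whose halting-problem-like behavior is nontrivial (e.g. let $A$ be a computable set such that the function $n \mapsto$ ``the true state of $n$'' is not computable; false and true stages as in the proof of Theorem \ref{equivACA} are the natural tool). The partial order should have an infinite computable antichain $D = \{d_n \colon n \in \N\}$ sitting at the ``bottom,'' together with auxiliary elements arranged so that any initial interval $I$ which contains infinitely many $d_n$ but is not a downward closure of a finite set would let us compute something we know is not computable. The finite downward closures are computable by fiat, so the content of the lemma is that these are the \emph{only} computable ones.

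The key steps, in order, would be: (1) fix the underlying combinatorial gadget — a sequence of ``towers'' of elements above each $d_n$, with a linearly ordered spine $\{b_m \colon m \in \N\}$ as in the earlier constructions, arranged so that $d_n$ falls below some spine element exactly when a certain computable event occurs, and such that which $d_n$ have this property is not computably decidable in a uniform way; (2) verify that $P$ and $\preceq$ are computable and that $D$ is an infinite computable antichain; (3) verify transitivity (the usual case analysis, mirroring the one in Lemma \ref{WKLneeded}); (4) show that any \emph{principal}-style initial interval, i.e.\ $\Down F$ for $F \subseteq P$ finite, is indeed computable (immediate, since membership in $\Down F$ is a finite disjunction of the decidable relations $x \preceq y$ for $y \in F$); (5) the main step — show conversely that if $I$ is a computable initial interval of $P$ then $I = \Down F$ for some finite $F$. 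For this I would argue that if $I$ is not a finite downward closure then $I$ must contain infinitely many elements of $D$ (or infinitely many spine elements), and then membership of the $d_n$'s, or of the spine elements, in $I$ would let one computably read off the noncomputable ``true stage'' information, contradicting $I$ being computable.

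The main obstacle I expect is step (5): engineering the gadget so that computability of $I$ genuinely collapses $I$ to a finite downward closure, \emph{without} accidentally making the ambient order itself noncomputable or making $D$ fail to be an antichain. The delicate point is that a computable $I$ can approximate the ``true stages'' from one side (a computable initial interval containing $d_n$ whenever $d_n$ is seen to be forced in, say), so the construction must be set up so that whenever $I$ is infinite-but-not-cofinite in $D$ it pins down the true-stage set exactly (both the elements that are in and those that are out), which is what yields the contradiction. Balancing ``enough elements above $d_n$ to force the reduction'' against ``few enough comparabilities to keep everything computable and $D$ an antichain'' is the real work; once the gadget is right, the rest is routine verification. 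I would model the precise comparabilities on the false/true-stage partial orders already used in the proof of Theorem \ref{equivACA} and in Lemma \ref{WKLneeded}, adapting them so that the roles of ``range of $f$'' are replaced by ``true stages'' and so that the relevant initial intervals are exactly the finite downward closures.
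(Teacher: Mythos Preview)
Your approach has a genuine gap at step (5). The issue is that encoding a \emph{single} noncomputable target (true stages, or any fixed noncomputable set) into $P$ cannot by itself rule out \emph{every} nontrivial computable initial interval. Here is the concrete failure in the construction you sketch. You place the antichain $D = \{d_n\}$ ``at the bottom'' with spine elements above; in particular the $d_n$ are minimal in $P$. Then $D$ itself is a computable initial interval of $P$. But $D$ is not the downward closure of any finite set: if $D = \Down F$ then by pigeonhole some $f \in F$ lies above infinitely many $d_n$, yet in your gadget each spine element $b_m$ dominates only finitely many $d_n$ (those seen to be false by stage $m$), and no $d_m$ dominates any other $d_n$. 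Your diagnostic --- that membership of the $d_n$'s in $I$ would recover the true stages --- fails outright for $I = D$, since every $d_n$ belongs to $I$ and this carries no information. The same objection applies to any infinite computable subset of $D$. Pushing further elements below the $d_n$'s only relocates the problem one level down, and now you must additionally ensure that $\Down D$ is noncomputable.

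The paper proves the lemma by a finite injury priority argument that diagonalizes against each partial computable function $\Phi_e$ separately. Requirement $R_e$ reserves a witness $y$ and waits for $\Phi_e(y)$ to converge; if $\Phi_e(y)=1$ the construction henceforth places every new element of $P$ below $y$, so that any initial interval with characteristic function $\Phi_e$ is cofinite and hence equals $\Down(\{y\} \cup F)$ for some finite $F$; if $\Phi_e(y)=0$ it places every new element above $y$, forcing the interval to be finite. Conflicts among the $R_e$ are resolved by priority in the standard way. The true/false-stage partial orders of Theorem \ref{equivACA} and Lemma \ref{WKLneeded} that you propose to adapt are built to defeat one specific reduction, not to control the entire lattice of computable initial intervals; the idea you are missing is precisely this per-index diagonalization, and it does not seem avoidable.
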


Before proving Lemma \ref{REC} we show how to deduce from it the
unprovability results.

\begin{theorem}\label{Bonnet1<-nRCA}
\RCA\ does not prove that every partial order such that all its initial
intervals are finite union of ideals has no infinite antichains.
\end{theorem}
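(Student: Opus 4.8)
The plan is to use the $\omega$-model of computable sets, usually denoted $\REC$, and derive Theorem~\ref{Bonnet1<-nRCA} from Lemma~\ref{REC}. Recall that $\REC$ is an $\omega$-model of \RCA\ (its second-order part consists exactly of the computable sets). The statement to be refuted is a $\Pi^1_2$ sentence of the form ``for every partial order $P$ (and the relevant witnesses to ``every initial interval is a finite union of ideals''), $P$ has no infinite antichain''. To show \RCA\ does not prove it, it suffices to exhibit a model of \RCA\ where it fails, and $\REC$ will be that model.

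First I would apply Lemma~\ref{REC} to obtain a computable partial order $P$ with an infinite computable antichain $D$ such that every computable initial interval of $P$ is the downward closure of a finite subset of $P$. Since $P$ and $D$ are computable, they belong to $\REC$. Next I would verify that, inside $\REC$, every initial interval of $P$ is a finite union of ideals: an initial interval in the sense of $\REC$ is a computable initial interval of $P$, hence by the conclusion of Lemma~\ref{REC} it equals $\Down F$ for some finite $F \subseteq P$, and $\Down F = \bigcup_{x \in F} \down x$ is a finite union of the principal ideals $\down x$ (each $\down x$ is clearly an ideal, being a principal downward cone). So the hypothesis of the statement holds in $\REC$ for this $P$. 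On the other hand, $D$ is an infinite antichain of $P$ and $D \in \REC$, so the conclusion ``$P$ has no infinite antichains'' fails in $\REC$. Hence the statement is false in $\REC$, and since $\REC \models \RCA$, it is not provable in \RCA.

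The only subtlety is making sure the ``finite union of ideals'' witness is available in $\REC$: we need, uniformly or at least individually for each initial interval $I$, a code for the finite set $F$ with $I = \Down F$ and codes for the ideals $\down x$. Each $\down x$ is computable uniformly in $x$ (it is just $\{y \in P : y \preceq x\}$, and $\preceq$ is computable), so these sets are in $\REC$; and given a computable initial interval $I$, the finite set $F$ is a finite object, so trivially coded. Thus no comprehension beyond what \RCA\ (hence $\REC$) provides is needed. The real content — and the main obstacle — is entirely in Lemma~\ref{REC} itself, whose proof builds the computable partial order with the required rigidity of its computable initial intervals; once that lemma is in hand, the present theorem is a short soft argument as sketched above.
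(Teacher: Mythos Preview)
Your proposal is correct and follows essentially the same approach as the paper: both use Lemma~\ref{REC} to exhibit a computable partial order $P$ whose computable initial intervals are all of the form $\Down F = \bigcup_{x\in F}\down x$ for finite $F$, observe that each $\down x$ is a computable ideal so the hypothesis holds in $\REC$, and then note that the infinite computable antichain witnesses the failure of the conclusion in $\REC$. Your extra remarks about the availability of the witnesses in $\REC$ are accurate but not needed beyond what the paper says.
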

\begin{proof}
It suffices to show that the statement fails in \REC, the $\omega$-model of
computable sets. Let $P$ the computable partial order of Lemma \ref{REC} and
let $I$ be a computable initial interval of $P$. Let $F$ be a finite set such
that $I = \Down F$. Then $I = \bigcup_{x \in F} \down x$ and each $\down x$
is a computable ideal.

Thus all initial intervals of $P$ which belong to \REC\ are finite union of
ideals also belonging to \REC. On the other hand, $P$ has an infinite
antichain in \REC, showing the failure of the statement.
\end{proof}

\begin{theorem}\label{Bonnet3<-nRCA}
\RCA\ does not prove that every partial order with countably many initial
intervals has no infinite antichains.
\end{theorem}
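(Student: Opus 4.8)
The plan is to show, exactly as in the proof of Theorem~\ref{Bonnet1<-nRCA}, that the displayed statement fails in \REC, the $\omega$-model consisting of the computable sets; since \REC\ is a model of \RCA, this yields the unprovability. First I would invoke Lemma~\ref{REC} to fix a computable partial order $P$ possessing an infinite computable antichain and such that every computable initial interval of $P$ is the downward closure $\Down F$ of some finite $F \subseteq P$.

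Next I would observe that the map sending a code for a finite subset $F$ of $P$ to $\Down F = \{x \in P \colon (\exists y \in F)\, x \preceq y\}$ is computable uniformly in $F$, since $P$ and $\preceq$ are computable and $F$ is finite. Enumerating $\Fin(P)$ computably, one thus obtains a single computable sequence $\{I_n \colon n \in \N\}$ whose members are exactly the sets $\Down F$ with $F \in \Fin(P)$, and each such $I_n$ is an initial interval of $P$. By the choice of $P$ in Lemma~\ref{REC}, every computable initial interval of $P$ equals some $I_n$. Interpreted in \REC, this says precisely that $P$ has countably many initial intervals.

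Finally, since the infinite antichain of $P$ is computable it belongs to \REC, so inside \REC\ the partial order $P$ has countably many initial intervals yet has an infinite antichain. Hence the statement ``every partial order with countably many initial intervals has no infinite antichains'' fails in \REC, and therefore is not provable in \RCA.

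The only delicate point — which is the content of Lemma~\ref{REC} rather than of this deduction — is that the downward closures of finite subsets already exhaust the computable initial intervals of $P$. Once that is granted, the uniformity needed to package them into a single computable sequence, and hence to witness ``countably many initial intervals'' inside \REC, is entirely routine, so there is no substantial obstacle at this stage of the argument.
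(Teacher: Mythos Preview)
Your proposal is correct and follows essentially the same route as the paper's proof: both argue that the statement fails in \REC\ by invoking Lemma~\ref{REC}, noting that the downward closures of finite subsets of $P$ are uniformly computable so that a single computable sequence lists all computable initial intervals of $P$, and concluding via the infinite computable antichain. Your write-up simply makes the uniformity step a bit more explicit than the paper does.
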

\begin{proof}
We again show that the statement fails in \REC, and again use the computable
partial order $P$ of Lemma \ref{REC}. Since the downward closures of finite
subsets of $P$ are uniformly computable, there exists a set $\{I_n \colon n
\in N\}$ in \REC\ which lists all computable initial intervals of $P$.
Therefore \REC\ satisfies that $P$ has countably many initial intervals.
Since $P$ has an infinite antichain in \REC, the statement fails.
\end{proof}

\begin{proof}[Proof of Lemma \ref{REC}]
We build $P$ by a finite injury priority argument. We let $P = \{x_n, y_n
\colon n \in \omega\}$ and ensure the existence of an infinite computable
antichain by making the $x_n$'s pairwise incomparable.

We further make sure that, for all $e \in \omega$, $P$ meets the requirement:
\[ R_e \colon (\exists y) \bigl( (\Phi_e (y) = 1 \implies (\forall^\infty z \in
P) z \preceq y) \land
         (\Phi_e (y) = 0 \implies (\forall^\infty z \in P) y \preceq z)
\bigr).\]
Here, as usual, $\Phi_e$ is the function computed by the Turing machine of
index $e$ and $\forall^\infty$ means \lq for all but finitely
many\rq.\smallskip

We first show that meeting all the requirements implies that $P$ satisfies
the statement of the Lemma. If $I$ is a computable initial interval of $P$
with characteristic function $\Phi_e$, fix $y$ given by $R_e$. We must have
$\Phi_e (y) \in \{0,1\}$. If $\Phi_e (y) =0$ then, by $R_e$, $(\forall^\infty
z \in P) y \preceq z$. As $y \notin I$, this implies that $I$ is finite and
hence $I= \Down I$ is the downward closure of a finite set. If $\Phi_e (y)
=1$, then by $R_e$ we have $(\forall^\infty z \in P) z \preceq y$. Thus $P
\setminus \down y$ and hence $I \setminus \down y$ are finite. As $y \in I$,
$I = \Down {(\{y\} \cup (I \setminus \down y))}$ is the downward closure of a
finite set.\smallskip

Our strategy for meeting a single requirement $R_e$ consists in fixing a
witness $y_n$ and waiting for a stage $s+1$ such that
\[ \Phi_{e,s} (y_n) \in \{0,1\}. \]
If this never happens, $R_e$ is satisfied. If $\Phi_{e,s} (y_n) = 0$, we put
every $x_m$ and $y_m$ with $m>s$ above $y_n$. If $\Phi_{e,s} (y_n) = 1$, we
put every $x_m$ and $y_m$ with $m>s$ below $y_n$. In this way $R_e$ is
obviously satisfied.\smallskip

To meet all the requirements, the priority order is $R_0, R_1, R_2, \ldots$.
At every stage $s$, we define a witness for $R_e$ via an index $n_{e,s}$ and
mark the requirements by a $\{0,1\}$-valued function $r(e,s)$ such that
$r(e,s)=0$ if and only if $R_e$ might require attention at stage
$s$.\smallskip

\textbf{Stage $\mathbf{s=0}$.} For all $e$, $n_{e,0}=e$ and
$r(e,0)=0$.\smallskip

\textbf{Stage $\mathbf{s+1}$.} We say that $R_e$ \emph{requires attention} at
stage $s+1$ if $e \leq s$, $n_{e,s} \leq s$, $r(e,s)=0$ and $\Phi_{e,s}
(y_{n_{e,s}}) \in \{0,1\}$. If no $R_e$ requires attention, then let
$n_{i,s+1} = n_{i,s}$ and $r(i,s+1) = r(i,s)$ for all $i$. Otherwise, let $e$
be least such that $R_e$ requires attention. Then $R_e$ \emph{receives
attention} at stage $s+1$ and $n=n_{e,s}$ is \emph{activated} and declared
\emph{low} if $\Phi_{e,s} (y_n) = 0$, \emph{high} if $\Phi_{e,s} (y_n) = 1$.
Let $n_{e,s+1} = n_{e,s}$ and $r(e,s+1)=1$. For $i<e$, $n_{i,s+1} = n_{i,s}$
and $r(i,s+1) = r(i,s)$. For $i>e$, $n_{i,s+1} = s+i-e$ and
$r(i,s+1)=0$.\smallskip

The following two properties are easily seen to hold:
\begin{enumerate}
 \item every $n$ is activated at most once;
 \item if $n$ is activated at stage $s$, then no $m$ such that $n<m<s$ is
     activated after $s$.
\end{enumerate}

We define $\preceq$ by stipulating that for all $n<m$:
\begin{enumerate}[\quad(i)]
 \item $x_n$ is incomparable with each of $y_n$, $x_m$ and $y_m$;
 \item $y_n \preceq$ ($\succeq$) $x_m, y_m$ if and only if $n$ is activated
     at some stage $s$ such that $n< s \leq m$, is declared low (high) and
     no $k<n$ is activated at any stage $t$ such that $s<t \leq m$.
\end{enumerate}

When (ii) occurs, it follows by (2) that no $k<n$ is activated at any stage
$t$ such that $n<t\leq m$.

\begin{nclaim}
$P$ is a partial order.
\end{nclaim}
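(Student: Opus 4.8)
The plan is to check the three order axioms for $\preceq$ directly from clauses (i) and (ii), using the two displayed properties (1) and (2) and the observation recorded right after (ii). Reflexivity holds by our standing convention. For the other two axioms the following bookkeeping is convenient: by (1) each index $n$ is activated at most once, at a stage I will call $s_n$ (and note $s_n>n$), with a single label, low or high; and clauses (i), (ii) show that the only non-reflexive comparabilities are of the form ``$y_n$ below or above $x_m$'' or ``$y_n$ below or above $y_m$'' with $n<m$, and that (ii) relates $y_n$ to $x_m$ in exactly the same way as it relates $y_n$ to $y_m$. So every strict comparability between two elements is governed by a single index, the smaller one, which is necessarily carried by some $y$, and whose label determines the direction of the comparability. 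Antisymmetry is then immediate: if both $u\prec v$ and $v\prec u$ held, the governing index would be declared both low and high, contradicting (1).

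For transitivity, assume $a\preceq b\preceq c$ with $a,b,c$ pairwise distinct, and argue by cases on whether $b$ is some $x_j$ or some $y_j$, and on the relative order of the three indices. Clauses (i), (ii) first pin down the shapes of $a$ and $c$: if $b=x_j$ then necessarily $a=y_k$, $c=y_m$ with $k,m<j$, $k$ low, $m$ high, and both $k$ and $m$ ``effective at $j$''; if $b=y_j$ there are a few more shapes of $a$ and $c$, but the effectiveness conditions that come for free are always of the shape ``$k$ effective at $j$'', ``$j$ effective at $k$'', and so on. Here I use the observation after (ii) to read ``$k$ effective at $m$'' as ``no index $<k$ is activated in $(k,m]$''; with this reading effectiveness at a larger index implies effectiveness at a smaller one, and two such conditions on adjacent intervals combine into one on their union. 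In the ``easy'' configurations — those in which the index of $b$ lies between, or on one side of, the indices of $a$ and $c$ — this monotonicity together with the combining of nested/adjacent intervals is precisely what is needed to verify the relevant clause of (ii) for $a$ against $c$, and hence to conclude $a\preceq c$.

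The delicate point, which I expect to be the main obstacle, occurs exactly in the configurations where the indices of both $a$ and $c$ are strictly smaller than the index $j$ of $b$ (this always happens when $b=x_j$, and it happens in one configuration when $b=y_j$). There one has two distinct activated indices $k,m<j$, both effective at $j$, and must show, say when $k<m$, that $s_k\le m$ and that $k$ is effective at $m$, so that $y_k\preceq y_m$ follows from the appropriate clause of (ii). Effectiveness of $m$ at $j$ together with $s_k\le j$ forces $s_k\le s_m$; and if one had $s_k>m$, then $m$ would satisfy $k<m<s_k$ yet be activated at $s_m>s_k$, contradicting property (2). So $s_k\le m$, and effectiveness of $k$ at $m$ is inherited from effectiveness at $j$; the symmetric sub-case $m<k$ is handled the same way. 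Assembling the cases gives $a\preceq c$ in every instance, so $\preceq$ is transitive and $P$ is a partial order.
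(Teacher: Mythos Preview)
Your argument is correct, but your case decomposition differs from the paper's, and the difference is instructive. You organize the transitivity proof around the middle element $b$ and the position of its index $j$ relative to the other two; the paper instead cases on which of the three indices is \emph{smallest}. The paper's choice is more economical: if the smallest index belongs to the middle element, it is immediately both low and high, a contradiction (this is the case you fold into your ``easy'' configurations without quite saying so); otherwise the smallest index already names the governing $y$ for the desired comparability $a\preceq c$, and one only needs to check that its effectiveness interval $(n,\cdot]$ can be shrunk or extended to the right endpoint --- exactly the ``combining of nested/adjacent intervals'' you describe. In particular the paper never has to compare two distinct activated indices $k,m<j$ against each other, which is what forces your ``delicate'' subcase.

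Two minor remarks on that subcase. First, your appeal to property~(2) is correct but redundant: effectiveness of $m$ at $j$ already says that no index $<m$ is activated in $(m,j]$, and since $k<m$ with $s_k\le j$, this alone yields $s_k\le m$ directly. Second, your description of the ``easy'' configurations as ``index of $b$ lies between, or on one side of'' the others is slightly imprecise, since ``$j$ smaller than both'' is the contradiction case, not an interval-combining case; it would be cleaner to isolate it explicitly, as the paper does.
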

\begin{proof}[Proof of claim]
We use $z_n$ to denote one of $x_n$ and $y_n$.

To show antisymmetry, suppose for a contradiction that $z_n \preceq z_m$ and
$z_m \preceq z_n$ with $n<m$. By (i) $z_n$ must be $y_n$. Since $n$ can be
activated only once, it follows that $n$ is activated at some stage $s$ with
$n<s \leq m$ and, by (ii), is declared both low and high, a contradiction.

To check transitivity, let $z_n\prec z_m\prec z_p$. Notice that $n$, $m$ and
$p$ are all distinct. We consider the following cases:
\begin{enumerate}[(a)]
\item $n<m,p$. Then $z_n=y_n$ and $n$ is activated and declared low at some
    stage $s$ such that $n<s \leq m$. It is easy to verify that no $k<n$ is
    activated at any stage $t$ such that $n<t \leq p$, and thus $y_n
    \preceq z_p$.
\item $m<n,p$. Then $z_m=y_m$ and $m$ is declared both high and low,
    contradiction.
\item $p<n,m$. Then $z_p=y_p$ and $p$ is activated and declared high at
    some stage $s$ such that $p<s\leq m$. As in case (a), it is easy to
    check that no $k<p$ is activated at any stage $t$ such that $p<t \leq
    n$, and so $z_n \preceq y_p$.\qedhere
\end{enumerate}
\end{proof}

\begin{nclaim}\label{claim2}
Every $R_e$ receives attention at most finitely often and is satisfied.
\end{nclaim}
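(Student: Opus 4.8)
The plan is to prove Claim \ref{claim2} by induction on $e$, showing that each requirement $R_e$ receives attention only finitely often and, consequently, is satisfied. First I would observe that the two properties (1) and (2) listed just before the first claim control how witnesses are re-assigned: when some $R_i$ receives attention at stage $s+1$, the witness indices $n_{i,s+1}$ for $i>e$ are pushed up to fresh values $s+i-e$, while for $i \le e$ they are left untouched and the marker $r(i,s+1)$ is left untouched. So the witness $n_{e,s}$ for $R_e$ can only change when some $R_i$ with $i<e$ receives attention.

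The inductive step would go as follows. By induction hypothesis, there is a stage $s_0$ after which no $R_i$ with $i<e$ ever receives attention again. After $s_0$ the witness index $n_{e,s}$ is constant, say equal to $n$, and $y_n$ is a fixed element. From stage $s_0$ on, $R_e$ requires attention at a stage $s+1 > s_0$ only if $r(e,s)=0$ and $\Phi_{e,s}(y_n) \in \{0,1\}$. Once $R_e$ receives attention (which happens at the first such stage, if any), we set $r(e,s+1)=1$, and since no $R_i$ with $i<e$ ever acts again, $r(e,\cdot)$ stays $1$ forever; hence $R_e$ never requires, and never receives, attention again. Thus $R_e$ receives attention at most once after $s_0$, and therefore only finitely often overall. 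This also establishes that $\lim_s n_{e,s}$ exists; call its value $n_e$.

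For satisfaction, I would split into two cases using the final witness $y_{n_e}$. If $R_e$ never receives attention after the witness stabilizes, then $\Phi_e(y_{n_e})$ never halts with value in $\{0,1\}$ (otherwise it would have received attention), so the implications in $R_e$ are vacuously true and the witness $y=y_{n_e}$ works. Otherwise $R_e$ receives attention at some last stage $s+1$, at which $n_e$ is activated and declared low (if $\Phi_{e,s}(y_{n_e})=0$) or high (if $\Phi_{e,s}(y_{n_e})=1$), and $n_e$ is never activated again by property (1). I would then check, using clause (ii) of the definition of $\preceq$, that in the low case every $x_m$, $y_m$ with $m$ large enough (larger than $s$ and than the last stage any $k<n_e$ is activated) satisfies $y_{n_e} \preceq x_m, y_m$, and symmetrically in the high case $x_m, y_m \preceq y_{n_e}$; here I need property (2) to guarantee that no smaller index $k<n_e$ interferes at the relevant later stages, since by the inductive hypothesis on witness stabilization only finitely many such activations occur. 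In either case $y=y_{n_e}$ witnesses $R_e$, as required.

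The main obstacle I anticipate is the bookkeeping in the last case: carefully pinning down, from clauses (i)--(ii) and properties (1)--(2), the exact cofinite set of $m$ for which the ordering relation between $y_{n_e}$ and $z_m$ points the right way, and making sure that the "no $k<n_e$ activated later" side condition in clause (ii) is eventually met — this is exactly where the finite-injury structure (only finitely many higher-priority activations) is used, so the argument must be run \emph{after} invoking the induction hypothesis that $R_0,\dots,R_{e-1}$ act only finitely often.
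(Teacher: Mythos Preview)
Your proposal is correct and follows essentially the same finite-injury induction as the paper's proof. The only minor difference is in the verification of clause (ii): the paper observes directly that after $R_e$ acts at stage $t+1$ no $k<n$ can ever be activated again (since higher-priority requirements have stopped and lower-priority witnesses are reset above $n$), whereas you argue more loosely that there is some last stage at which any $k<n_e$ is activated; both are fine, and the paper's version just gives the sharper bound $m>t$ rather than a cofinite set of $m$.
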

\begin{proof}[Proof of claim]
As usual, the proof is by induction on $e$. Let $s$ be the least such that no
$R_i$ with $i<e$ receives attention after $s$. Let $n = n_{e,s}$. Then $n =
n_{e,t}$ for all $t \geq s$, because a witness for a requirement changes only
when a stronger priority requirement receives attention. Similarly, $r(e,t)
=0$ for all $t \geq s$ such that $R_e$ has not received attention at any
stage between $s$ and $t$. If $\Phi_e (y_n) \notin \{0,1\}$, $R_e$ is clearly
satisfied. Suppose that $\Phi_e (y_n) = 0$ (case $1$ is similar) and let $t$
be minimal such that $t \geq \max \{s,e,n\}$ and $\Phi_{e,t}(y_n)=0$. Then
$R_e$ receives attention at stage $t+1$, $n$ is activated and declared low
and no $m<n$ will be activated after stage $t+1$ (because $n_{i,u}>n$ for all
$i>e$ and $u>t$). Then $y_n \preceq x_m, y_m$ for all $m>t$ and so $R_e$ is
satisfied.
\end{proof}

Claim \ref{claim2} completes the proof of the Lemma.
\end{proof}

\section{Open problems}\label{section:problems}
The results of Sections \ref{section:WKL} and \ref{section:REC} leave open
the status of the right to left directions of Theorems \ref{Bonnet1} and
\ref{Bonnet3}. Each of the statements (1) \lq\lq every partial order with an
infinite antichain contains an initial interval which is not a finite union
of ideals\rq\rq\ and (2) \lq\lq every partial order with an infinite
antichain has uncountably many initial intervals\rq\rq\ can be either
equivalent to \WKL\ or of strength strictly between \RCA\ and \WKL.

The latter case would be quite interesting, since the only mathematical
statements with this intermediate strength are those from measure theory that
are equivalent to the system \WWKL. Bienvenu, Patey, and Shafer improved
Theorems \ref{Bonnet1<-nRCA} and \ref{Bonnet3<-nRCA} by showing that \WWKL\
does not imply neither (1) nor (2). These results are obtained by modifying
the proof of Lemma \ref{REC}. The draft \cite{BPS} includes also other
non-implications involving statements (1) (called \NCF\ there) and (2).

On the other hand, Gregory Igusa (in private communications) claims that
there cannot be a uniform proof of \WKL\ from (1). This claim does not rule
out the possibility that (1) implies \WKL: e.g.\ there might exist a proof
using twice the statement, the second time using it on a partial order built
from the initial interval obtained by the first application.

\bibliographystyle{alpha}
\bibliography{initialintervals}{}

\begin{thebibliography}{CMS04}

\bibitem[Bon75]{Bon73}
Robert Bonnet.
\newblock On the cardinality of the set of initial intervals of a partially
  ordered set.
\newblock In {\em Infinite and finite sets ({C}olloq., {K}eszthely, 1973;
  dedicated to {P}. {E}rd\H{o}s on his 60\textsuperscript{th} birthday), {V}ol.
  {I}}, pages 189--198. Colloq. Math. Soc. J{\'a}nos Bolyai, Vol. 10.
  North-Holland, Amsterdam, 1975.

\bibitem[BPS]{BPS}
Laurent Bienvenu, Ludovic Patey, and Paul Shafer.
\newblock Randomness and diagonal non-recursiveness in reverse mathematics.
\newblock Work in progress.

\bibitem[Clo89]{Clo89}
P.~Clote.
\newblock The metamathematics of scattered linear orderings.
\newblock {\em Arch. Math. Logic}, 29(1):9--20, 1989.

\bibitem[CMS04]{ChoMarSol04}
Peter~A. Cholak, Alberto Marcone, and Reed Solomon.
\newblock Reverse mathematics and the equivalence of definitions for well and
  better quasi-orders.
\newblock {\em J. Symbolic Logic}, 69(3):683--712, 2004.

\bibitem[ET43]{ErdTar43}
P.~Erd{\"o}s and A.~Tarski.
\newblock On families of mutually exclusive sets.
\newblock {\em Ann. of Math. (2)}, 44:315--329, 1943.

\bibitem[Fra00]{Fra00}
Roland Fra{\"{\i}}ss{\'e}.
\newblock {\em Theory of relations}, volume 145 of {\em Studies in Logic and
  the Foundations of Mathematics}.
\newblock North-Holland Publishing Co., Amsterdam, revised edition, 2000.
\newblock With an appendix by Norbert Sauer.

\bibitem[Mar91]{Borelqo}
Alberto Marcone.
\newblock Borel quasi-orderings in subsystems of second-order arithmetic.
\newblock {\em Ann. Pure Appl. Logic}, 54(3):265--291, 1991.

\bibitem[MS11]{MarSho11}
Alberto Marcone and Richard~A. Shore.
\newblock The maximal linear extension theorem in second order arithmetic.
\newblock {\em Arch. Math. Logic}, 50:543--564, 2011.

\bibitem[PS06]{PS}
Maurice Pouzet and Norbert Sauer.
\newblock From well-quasi-ordered sets to better-quasi-ordered sets.
\newblock {\em Electron. J. Combin.}, 13(1):Research Paper 101, 27 pp.
  (electronic), 2006.

\bibitem[Sim09]{sosoa}
Stephen~G. Simpson.
\newblock {\em Subsystems of second order arithmetic}.
\newblock Perspectives in Logic. Cambridge University Press, Cambridge, second
  edition, 2009.

\bibitem[Tan89]{Tan89}
Kazuyuki Tanaka.
\newblock The {G}alvin-{P}rikry theorem and set existence axioms.
\newblock {\em Ann. Pure Appl. Logic}, 42(1):81--104, 1989.

\bibitem[Tan90]{Tan90}
Kazuyuki Tanaka.
\newblock Weak axioms of determinacy and subsystems of analysis. {I}.
  {$\Delta^0_2$} games.
\newblock {\em Z. Math. Logik Grundlag. Math.}, 36(6):481--491, 1990.

\end{thebibliography}

\end{document}